\DeclareMathOperator{\mes}{\bm \mu}
\newcommand{\md}{\mathrm d}
\newcommand{\norm}[1]{\left\lVert#1\right\rVert}
\newcommand{\bx}{\mathbf{x}}
\newcommand{\cE}{\mathcal{E}}
\newcommand{\cL}{\mathcal{L}}
\newcommand{\tZ}{\tilde{Z}}
\newcommand{\tm}{\mathbf{\tilde{m}}}
\newcommand{\cD}{\mathcal{D}}
\newcommand{\cG}{\mathcal{G}}
\newcommand{\E}{\mathbb{E}}
\newcommand{\PP}{\mathbb{P}}
\newcommand{\triplenorm}[1]{{\vert\kern-0.25ex\vert\kern-0.25ex\vert #1
    \vert\kern-0.25ex\vert\kern-0.25ex\vert}}
\DeclareMathOperator{\Uni}{Uni}
\DeclareMathOperator{\TV}{TV}
\DeclareMathOperator{\Exp}{Exp}
\DeclareMathOperator{\Var}{Var}
\DeclareMathOperator{\Poi}{Poi}
\DeclareMathOperator{\Ave}{Ave}
\begin{document}

\newtheorem{thm}{Theorem}
\newtheorem{lemma}[thm]{Lemma}
\newtheorem{cor}[thm]{Corollary}
\theoremstyle{definition}
\newtheorem{asmp}[thm]{Assumption}
\newtheorem{exm}[thm]{Example}
\newtheorem{rmk}[thm]{Remark}
\numberwithin{equation}{section}

\title[Banking System and Financial Contagion]{Dynamic Contagion in a Banking System\\ with Births and Defaults}

\author{Tomoyuki Ichiba}

\address{\scriptsize Department of Statistics and Applied Probability, University of California, Santa Barbara}

\email{ichiba@pstat.ucsb.edu}

\author{Michael Ludkovski}

\address{\scriptsize Department of Statistics and Applied Probability, University of California, Santa Barbara}

\email{ludkovski@pstat.ucsb.edu}

\author{Andrey Sarantsev}

\address{\scriptsize Department of Mathematics and Statistics, University of Nevada, Reno}

\email{asarantsev@unr.edu}

\date{\today. Version 51}

\keywords{default contagion, mean field limit, interacting birth-and-death process,  McKean-Vlasov jump-diffusion, propagation of chaos, Lyapunov function}

\subjclass[2010]{60J70, 60J75, 60K35, 91B70}

\begin{abstract}
We consider a dynamic model of interconnected banks. New banks can emerge, and existing banks can default, creating a birth-and-death setup. Microscopically, banks evolve as independent geometric Brownian motions.  Systemic effects are captured through default contagion: as one bank defaults, reserves of other banks are reduced by a random proportion. After examining the long-term stability of this system, we investigate mean-field limits as the number of banks tends to infinity. Our main results concern the measure-valued scaling  limit which is governed by a McKean-Vlasov jump-diffusion. The default impact creates a mean-field drift, while the births and defaults introduce jump terms tied to the current distribution of the process. Individual dynamics in the limit is described by the propagation of chaos phenomenon. In certain cases, we explicitly characterize the limiting average reserves.

\end{abstract}

\maketitle

\thispagestyle{empty}

\section{Introduction}
Lending and trading relationships between banks create dependence which can exacerbate financial crises through systemic risk. With this motivation in mind, we study a dynamic model of interacting particles representing the banking network. A particle represents the capital (or net assets) of a financial entity. On the individual level, a particle evolves in time according to a stochastic differential equation, in analogue to classical models of risky assets. On an aggregate or economy-wide level, the particles interact due to inter-bank lending and contractual obligations (such as bilateral derivative claims) that tie the assets and liabilities of different entities, generating mean-field effects.

\smallskip

Focusing on systemic stability, the key aspect of the macroscopic dynamics concerns bank \emph{defaults}. Each particle is viewed as a defaultable asset, meaning it can enter the default state when reserves become low. Financial contagion is then represented through the interaction mechanism which increases default likelihood of other banks once a given bank defaults. Systemic risk emerges as the event of a large number, or cluster, of defaults.

\smallskip

To model such defaults, one may draw upon the two fundamental paradigms in credit risk.

\smallskip

{\it 1. Structural credit models:} Defaults modeled by the first entrance times $\tau  :=\inf\{ t: X_i(t) \in \mathcal{D} \}$, i.e.,~bank capital entering the default region $\mathcal{D}$ (e.g.,~$\mathcal{D} = (-\infty,0]$). In that case, default contagion is usually viewed as a default of bank $i$ affecting the reserves $X_j(\tau)$ of bank $j$, which can generate cascading defaults, i.e.,~multiple banks defaulting simultaneously.

\smallskip

{\it 2. Reduced-form credit models:} Defaults modeled by the \emph{death time} $\tau$ of the particle, captured by a (hazard) rate process that controls the instantaneous probability of default. In this setting, contagion represents heightened default rate of bank $j$ following default of bank $i$, so that defaults \emph{cluster}, but default events are still spaced out in time.

\smallskip

In this work we develop an extension of the interacting particles approach to systemic risk  that makes the financial system dynamic not only on the individual level (bank reserves modeled by stochastic processes), but also in the aggregate (number of banks fluctuates). Thus, we explicitly capture the death (i.e.~default) of existing banks, and the birth of new ones. Indeed, a limitation of existing models is that the size of the system $N$ is either kept constant or is decreasing over time due to defaults. In reality defaulted entities \emph{disappear} and new entities are \emph{created} in analogy to death and birth events in population dynamics. Therefore, aggregate reserves change continuously due to infinitesimal fluctuations in individual reserves, as well as discontinuously due to births/defaults.

\smallskip

Including birth and death of banks carries several important implications. First, it brings the opportunity to obtain stationary models (otherwise the number of active banks will just shrink over time), which is convenient for mathematical analysis, and especially for investigation of \emph{scaling limits}. Stationarity is also necessitated economically for any longer-term model that covers more than a couple of years. Second, our setup offers further contagion mechanisms: We tie individual dynamics both to total system reserves $S(t)$, as well as the number of banks $N(t)$. Third, it brings more realism, paving the way to the next-generation dynamic models and helping to close the gap to the increasingly sophisticated static versions.  Fourth, working with a varying dimension brings nontrivial mathematical challenges in studying the properties of the system, in particular to handle the non-standard state space $\mathcal{X}$ below. To do so, we use McKean-Vlasov jump-diffusions.

\smallskip

To summarize, our main contribution is to describe a class of interacting particle models with a dynamic dimension and mean-field birth and death interactions. Toward this end we: (i) rigorously construct the interacting banking system with local + mean-field default intensities, including investigating its stability; (iii) analyze convergence to a mean field limit for the average bank reserves that leads to a novel jump-diffusion McKean-Vlasov Stochastic Differential Equation (SDE). The drift and diffusion coefficients, as well as the jump measure, of the resulting representative particle depend both on the current position of the process, and the current distribution of the process.

\subsection{Review of existing literature} Systemic risk and financial contagion in financial systems serves as a focus of much recent research, see for instance the handbook \cite{Handbook} describing many different approaches. In the context of a dynamic system with diffusing particles representing bank assets, there are at least three related mean-field approaches. 

\smallskip

Using the reduced-form credit framework, \cite{Giesecke,Cvitanic,Spiliopoulos} modeled the \emph{default rates} $\lambda^n$ of $N$ particles as an interacting diffusion, adding in systemic effects, such as self-exciting defaults and common exogenous shocks. \cite{Bo, Award, Sun} used diffusions interacting through drift to model \emph{bank assets}, with defaults arising structurally from crossing a given default threshold. A related system with interaction through hitting a boundary is discussed in \cite{Kau1}.

\smallskip

In the paper \cite {Fischer}, a mean-field game of interacting particles is introduced, where particles get absorbed upon exiting a certain domain (but there is no emergence of new banks). In the paper \cite{Delarue1} a discrete-space system of interacting particles is used to quantify systemic risk. 

\smallskip

Finally, a nonlocal interaction arising from the default hitting times was recently investigated in the mean-field limit in \cite{Sergey,Sojmark1,Sojmark2, Kau2}. All of the above models either fix the size $N$ of the system, or take $N(t)$ to be non-increasing, representing, say, a fixed pool of defaultable assets that is monitored over time. To our knowledge, the only work that allows $N(t)$ to change have appeared for capturing bank splits/mergers in stochastic portfolio theory~\cite{FouqueStrong,MyOwn4}.



\smallskip

Compared to existing models who tend to focus on short-term (i.e.~a few months to a couple of years), our population-dynamics-inspired setup targets the longer timescale, whereby the concept of a time-stationary banking system becomes appropriate. While there is an ongoing churn among individual banks, our focus is on the macroscopic quantities such as total/mean reserves and number of banks. In line with adoption of the birth-and-death perspective, we focus exclusively on default contagion, eschewing the other mechanisms of systemic dependence, such as interacting drifts or default cascades.

\smallskip

In terms of the mean-field scaling limit, we adapt the results of Graham from the 1990s \cite{Graham,GrahamNew}. Recently several other works investigated mean-field models with particles undergoing jump diffusions. In particular, a growing strand of literature \cite{Delarue1,Delarue2,neuron-new,toy,Mehri18} investigates neuronal networks where $X_i(t)$ are electrical states of individual neurons. These models feature jump diffusions that capture spikes from neurons firing, however the mean-field interaction is limited to the drift and jump size terms; jump activity is taken to be a Poisson process with a deterministic local intensity. 

\smallskip

An extension to simultaneous jumps which transform to a drift term in the limit and are similar to our contagion mechanism appears in \cite{Andreis}. While the above works also establish the hydrodynamic McKean-Vlasov limit existence and propagation of chaos, their pre-limit models always feature a constant number of particles $N$ so the scaling procedure of $N \to \infty$ is standard. In contrast, endogenizing $N$ creates multiple scaling alternatives which is one of the main foci of our work. Finally, we should also mention \cite{Campi17,Campi18} who analyzed mean-field \emph{games} with jump-diffusions, however again they only consider interaction in the jump sizes.

\subsection{Informal description of the model} \label{sec: Informal} We model the financial system by a vector of continuous time stochastic processes (individual ``particle'' locations) $X_i = (X_i(t),\, t \ge 0)$, with $X_i(t) \ge 0$ standing for the reserves of the corresponding bank $i$ at time $t \ge 0$. Low $X_i(t)$ means that the bank has minimal reserves and is close to being financially insolvent; healthy banks should have large reserves. Let $I(t) \in 2^{\mathbb{N}}$ be the finite set of banks at time $t \ge 0$ and
$$
N(t) = |I(t)|,\quad S(t) := \sum\limits_{i \in I(t)}X_i(t),
$$
so that  $N(t)$ is the number of banks and $S(t)$ is the sum of their reserves at time $t$. Locally, each $X_i$ behaves as an independent geometric Brownian motion, representing the idiosyncratic shocks to the reserves of the $i$th bank. Banks randomly emerge and default. Birth of new banks has time-varying intensity $\lambda_\cdot$ and starting size distribution $\mathcal{B}$, both depending on $N(t)$ and $S(t)$. The respective dependence captures the idea that forming a new bank is easier with less  competition.

\smallskip

An existing bank $i$ {\it defaults} ($X_i$ is killed) with intensity $\kappa_t$ depending on $N(t)$, $X_i(t)$, and $S(t)$. Default becomes more likely as $X_i$ drops; safety from default requires larger reserves. A default by $i$ affects other banks $j \neq i$: Their reserves $X_j(t)$ decrease at the default epoch by a random factor $\xi_{ij}$, which is dependent on $N(t)$, $X_i(t)$, $S(t)$, and idiosyncratic factors related to these particular banks $i$ and $j$. This models {\it financial contagion} in the interconnected financial system, including the intuition that defaults of larger banks $X_i(\tau-)$ trigger more contagion than smaller ones. 
%
The overall rules governing the system dynamics are thus:

\smallskip

(a) As long as the number of banks stays constant, each of them behaves as a geometric Brownian motion with drift $r$ and volatility $\sigma$, independently of other banks.

\smallskip

(b) A new bank is added to the system with rate $\lambda_{N(t)}(S(t))$. This bank has initial reserves distributed according to a probability measure $\mathcal B_{n, s}$ on $(0, \infty)$ when $N(t-) = n$, $S(t-) = s$.
When $n=0$, we write $\mathcal B_{n, s} = \mathcal B_0$ for all $s > 0$; this governs the distribution of the new bank reserves when it is the first emerging bank. We denote by
$\overline{\mathcal B}(n, s)$ the mean size of a new bank, i.e.~the first moment of $\mathcal B_{n,s}$.

\smallskip

(c) An existing bank $i \in I(t)$ {\it defaults} with rate $\kappa_{N(t)}(S(t),X_i(t))$. At the moment of default, reserves of remaining banks $j \in I(t),\, j \ne i$, decrease by a fraction 
$$
\xi_{ji} \sim \cD_{N(t), S(t), X_i(t)}
$$
which are i.i.d.~random variables with values in $(0, 1)$. The measure $\cD_{n,s,x'}$, with mean  $\overline{\cD}(n, s,x)$, governs the proportional impact of default given the number of banks, their total reserves, and the size of the defaulting bank $x'$.

\smallskip

\subsection{Questions of interest} First, we investigate conditions on this system to be well-defined probabilistically. In particular, we establish conditions for the system to be {\it conservative}: defined on the infinite time horizon. Next, we study the stronger notion of {\it stability} of this system: Whether the vector of $X_i(t)$ converges to some limiting distribution as $t \to \infty$. To find sufficient conditions for stability we use two different methods: (a) Lyapunov functions, developed in classic papers \cite{MT1993a, MT1993b}; (b) comparison of $\{N(t)\}$ with a birth-death process.

\smallskip

Our main analysis is devoted to the limiting behavior of this system as the number of banks tends to infinity. After the proper scaling of birth and default intensities, the empirical distribution of $X_i(t)$ converges to a measure-valued process, which is a solution to a certain McKean-Vlasov stochastic differential equation with jumps, i.e.,~a nonlinear diffusion with discrete jump sets. For this process, the drift and diffusion coefficients, as well as the jump measure, depend not only on the current location of the process (as would be for a classical jump-diffusion), but also on the current {\it distribution} of this process. This is a {mean field limit.}

\smallskip

In fact, we find two different mean-field limits, with parameters scaled: (a) according to the {\it current} number of banks; (b) according to the {\it initial} number of banks. In both cases, the limit is a McKean-Vlasov jump-diffusion, but in case (b), the parameters (drift and diffusion coefficients, jump measures) depend on the whole history, rather than on the current state and distribution, of the process. Both limits are financially viable, depending on the birth-and-death rates $\lambda, \kappa$.

\smallskip

In certain cases, the McKean-Vlasov equation turns out to allow an explicit solution: geometric Brownian motion with time-dependent drift, killed with certain rate and then resurrected at a certain given probability distribution. Financial contagion described above leads to an additional drift coefficient in the limit, while emergence of banks creates the phenomenon of resurrection. Economically, this limit offers an equilibrium justification for using a local-intensity defaultable geometric Brownian motion model for an individual risky asset. Furthermore, we show that the time-stationary version of this limiting process is a mixture of lognormal distributions. 

\smallskip

\emph{Systemic risk} corresponds to a large number of defaults in our system. This can be interpreted as an event in terms of $N(T)$ for some horizon $T$, or a joint event about $\{ N(T), S(T)\}$. Probabilities of such events can be evaluated numerically with our model; the mean field limit offers additional insights into the distribution of the mean bank size.

\smallskip

Lastly, we examine the behavior of an individual bank under these limits. It converges to a diffusion process, similar to geometric Brownian motion, with constant diffusion coefficient and an (easily computable) time-dependent drift, killed at a certain rate. For two banks (or any finite number), dependence vanishes in the limit. The corresponding processes converge to independent copies of such processes, similar to geometric Brownian motions. This phenomenon is called {\it propagation of chaos}. 

\subsection{Organization of the paper} In Section~\ref{sec:construction}, we introduce necessary notation, and construct our model formally. In Section~\ref{sec:stability}, we find sufficient conditions for no explosions and for stability of this system, as well as estimating rate of convergence. In Section \ref{sec:mean-field}, we consider large systems, to obtain the (first) mean field limit (scaling by current number of banks) and the resulting McKean-Vlasov-It\^o-Skorohod process. We apply this to systemic risk. Finally, we consider behavior of individual banks in these large systems.  In Section~\ref{sec:mf-N}, we establish the second result (scaling by the initial number of banks). Sections \ref{app:technical}-\ref{sec:theorem42} are devoted to proofs.
 Appendix in Section \ref{sec: Appendix} collects auxiliary results. 

\section{Definitions and Formal Description}\label{sec:construction}

\subsection{Notation} Before constructing the system, let us define the state space
$$
\mathcal X := \bigcup\limits_{N=0}^{\infty}(0, \infty)^N,
$$
with the understanding that $(0, \infty)^0 := \{\varnothing\}$, corresponding to the case of no banks (empty banking system). This is a Hausdorff topological space with disconnected components $(0, \infty)^N,\, N = 0, 1, 2, \ldots$. We define the Lebesgue measure $\mes$ on $\mathcal X$, which coincides with the $N$-dimensional Lebesgue measure on $(0, \infty)^N$ for each $N \ge 1$, and $\mes(\{\varnothing\}) = 1$ for $N = 0$. We denote the integral of a measurable function $f : (0, \infty) \to \mathbb R$ with respect to a probability measure $\nu$ as $(\nu, f) := \int f(x) \,\nu ({\mathrm d}x) $. Let $f_p (x) := x^p$ for $p, x > 0$. For each 
$\mathbf{x} = (x_1, \ldots, x_{\mathfrak n(\bx)}) \in \mathcal X$, define:

\smallskip

\noindent (a) the  dimension $\mathfrak n(\bx)$ of $\bx$  , i.e.,~ if $\mathbf{x} \in (0, \infty)^{N}$, then $\mathfrak n(\bx) = N$ for $N  = 1, 2, \ldots$, and $ \mathfrak n (\varnothing) = 0$;

\smallskip

\noindent  (b) the sum $\mathfrak{s}(\bx) := \sum_{k=1}^{\mathfrak n(\bx)}x_k$, with $\mathfrak{s}(\varnothing) := 0$;

\smallskip

\noindent  (c) the empirical measure corresponding to $\bx (\neq \varnothing) $ :
\begin{equation} \label{eq: empiricaldist}
\mu_{\mathbf{x}} := \frac1{\mathfrak n(\bx)}\sum\limits_{i=1}^{\mathfrak n(\bx)}\delta_{x_i}(\cdot) \, \text{ and } \mu_{\varnothing} := \delta_{\varnothing} ;
\end{equation}

\smallskip

\noindent (d) for any function $f : (0, \infty) \to \mathbb R$, a corresponding function $\mathcal E_f: \mathcal X \to \mathbb R$:
\begin{equation}
\label{eq:empirical-function}
\mathcal E_f(\bx) := (\mu_{\mathbf{x}}, f) = \frac1{\mathfrak n(\bx)}\sum\limits_{i=1}^{\mathfrak n(\bx)}f(x_i);
\end{equation}

\noindent (e) the mean (average) $\overline{\bx} = \mathfrak{s}(\bx)/\mathfrak{n}(\bx) = (\mu_{\bx}, f_1) = \mathcal E_{f_1}(\bx)$.

\smallskip

A subset $E \subseteq \mathcal X$ is compact if it intersects only finitely many levels $(0, \infty)^N$, and if the intersection with each such level is compact in the usual Euclidean topology. Denote by $\mathcal P_p$ the family of all probability measures on $\mathbb R_+$ with finite $p$-th moment. This is a metric space under the {\it Wasserstein distance:}
\begin{equation}
\label{eq:Wasserstein}
\mathcal W_p(\nu', \nu'') = \inf_{(\xi', \xi'')} \E [ |\xi' - \xi''|^p]^{1/p}, \qquad \nu', \nu'' \in \mathcal P_{p} ,
\end{equation}
where the {infimum} in~\eqref{eq:Wasserstein} is taken over all couplings $(\xi', \xi'')$ of random variables with marginals $\nu', \nu''$, respectively from the family $\mathcal P_{p}$ for $ p \ge 1$. For $p \in (0, 1)$, the distance~\eqref{eq:Wasserstein} is not a metric, but it generates a topology. It is known that convergence in this space is equivalent to the weak convergence plus convergence of the $p$th moments. Here weak convergence of probability measures or random variables is denoted by $\Rightarrow$.

A {\it geometric Brownian motion} with {\it drift} $\mu$ and {\it diffusion} $\sigma^2$ is defined as
$$
x_0\exp\left(\mu t + \sigma W(t)\right),\quad t \ge 0
$$
 for a Brownian motion $W$ on a filtered probability space and starting point $x_0$. We assume that all banks share fixed volatility $\sigma$ and drift $\mu := r-\sigma^2/2$ where $r \ge 0$ is the {asset growth rate}. (Those quantities could be also  straightforwardly randomized, in an i.i.d.~manner across the banks.) Let $\mathcal C$, $\mathcal C_b$, and $\mathcal C^2$  be the spaces of continuous, bounded continuous, and twice continuously differentiable functions $(0, \infty) \to \mathbb R$, respectively. For bounded functions $f : (0, \infty) \to \mathbb R$, define $ \norm{f} := \sup_{x  > 0}|f(x)|$. Define the following operators on $\mathcal C^2$:
\begin{equation}
\label{eq:operators}
D_1f(x) := xf'(x),\qquad D_2f(x) := x^2f''(x),\qquad \cG f := rD_1 f + \frac{\sigma^2}2D_2 f,
\end{equation}
so that $\cG$ is the infinitesimal generator of a geometric Brownian motion. Operators in~\eqref{eq:operators} preserve the monomial function $f_p$ up to a constant multiple:
\begin{equation}
\label{eq:acting-simply}
D_1f_p = pf_p,\quad D_2f_p = p(p-1)f_p,\quad \cG f_p = \bigl(\sigma^2p(p-1)/2 + pr\bigr)f_p.
\end{equation}
For a measure $\nu \in \mathcal P_1$, we denote its mean by $\overline{\nu} := (\nu, f_1)$. In particular, $\overline{\nu_{\bx}} = \overline{\bx}$. Define the space
$$
\mathcal C^2_b := \{f \in \mathcal C^2\mid f, D_1f, D_2f \in \mathcal C_b\}
$$
with the norm which makes it a Banach space:
\begin{equation}
\label{eq:triplenorm}
\triplenorm{f} := \norm{f} + \norm{D_1f} + \norm{D_2f}.
\end{equation}
The {\it total variation distance} between two probability measures $P$ and $Q$ on $\mathcal X$:
\begin{equation}
\label{eq:TV}
\norm{P - Q}_{\TV} = \sup\limits_{f : \mathcal X \to \mathbb R,\, |f| \le 1}|(P, f) - (Q, f)| = 2\sup\limits_{A \subseteq \mathcal X}
|P(A) - Q(A)|.
\end{equation}
A generalization of~\eqref{eq:TV} is defined as follows: Fix a function $V : \mathcal X \to [1, \infty)$, and let
\begin{equation}
\label{eq:V-norm}
\norm{P - Q}_V := \sup\limits_{f : \mathcal X \to \mathbb R, |f| \le V}|(P, f) - (Q, f)|.
\end{equation}
For $V \equiv 1$, the norm~\eqref{eq:V-norm} becomes the usual total variation norm from~\eqref{eq:TV}. Convergence in such norms is in some sense stronger than weak convergence or convergence in Wasserstein distance: The former requires convergence for {\it all measurable} test functions (bounded by a constant or by a function, depending on the measure), while the latter does   only for {\it continuous} test functions.

\smallskip

Finally, for a metric space $(E, \rho)$, define the Skorohod space $\cD([0, T], E)$ of $E$-valued, right-continuous functions with left limits ({\it rcll}) on $[0, T]$. In particular, $\cD[0, T] := \cD([0, T], \mathbb R)$.

\subsection{Formal description of the system} Take a filtered probability space $(\Omega, \mathfrak F, (\mathfrak F (t)), \PP)$ endowed with the following independent random objects:

\smallskip

(a) an initial condition $\mathbf{x}_0 \in \mathcal X$;

\smallskip

(b) i.i.d.~random variables $\xi_{i,j,n, s, x} \sim \cD_{n, s,x}$ for $s > 0$, $0 < x < ns$, $i, j = 1, 2, \ldots$;
\smallskip

(c) i.i.d.~Brownian motions $W_{i,j}$ for $i = 0, 1, 2, \ldots$ and $j = 1, 2, \ldots$

\smallskip

(d) i.i.d.~$(0, \infty)$-valued random variables $\zeta_{k, n, s} \sim \mathcal B_{n, s}$ for every $k, n = 0, 1, 2, \ldots$ and $s > 0$;

\smallskip

(e) i.i.d.~exponential random variables $\eta_{k, i}$ with mean $1$ for $i, k = 0, 1, 2, \ldots$,

\noindent where $\cD_{n, s, x}$ is a probability distribution on $(0, 1)$ with average default impact $ \overline{\cD}$, depending on $n, x, s$, and $\mathcal B_{n,s}$ is a probability distribution on $(0, \infty)$ with average size  $ \overline{\mathcal B}$ of new bank, depending on $n, s$.  See Section \ref{sec: Informal} for informal description.
\smallskip

Our model consists of three components $(X, I, M)$:

\smallskip

(A) an $\mathcal X$-valued continuous-time process $X := (X(t),\, t \ge 0)$ with right continuous with left limits (r.c.l.l.) trajectories, which jumps at random times $0 = \tau_0  < \tau_1 < \tau_2 < \ldots$, and on each time interval $[\tau_k, \tau_{k+1}),\, k = 0, 1, 2, \ldots$ has constant dimension $N(t) := \mathfrak n( X(t))$;

\smallskip

(B) a set-valued process $I := (I(t),\, t \ge 0)$ such that for $t \ge 0$, $I(t) \in 2^{\mathbb{N}}$ is a finite set of positive integers, which is constant on each time interval $[\tau_k, \tau_{k+1})$ for $k = 0, 1, 2, \ldots$, with $|I(t)| = N(t)$; this is the set of the names of current banks. Initially, $I(0) = \{ 1,2,\ldots, \mathfrak{n}( \bx_0)\}$.

\smallskip

(C) a nondecreasing positive integer-valued process $M := (M(t),\, t \ge 0)$, which is also constant on each interval $[\tau_k, \tau_{k+1})$, such that $M(t) := \max \{ k : k \in \cup_{s \in [0, t]}I(s)\}$; this is the maximum index or name of a bank which existed so far at some point.

\smallskip

We define $ (X, I, M) $ inductively with $\, \lvert I ( \cdot ) \rvert =  \mathfrak n ( X(\cdot)) \le M(\cdot)\,$ on the time interval $[0, \tau_{\infty})$, where $\tau_{\infty} := \lim_{k \to \infty}\tau_k$. The detailed, formal construction is discussed in Appendix.  By construction, this is a Markov process on the state space
\begin{equation}
\Xi := \{(\mathbf{x}, \mathfrak{i}, \mathfrak m) \in \mathcal X\times 2^{\mathbb N}\times \mathbb N \, : \,  |\mathfrak{i}| = \mathfrak n(\bx) \le \mathfrak m\} ,
\end{equation}
and its law is uniquely determined up to explosion time. The  generator  $\mathfrak{L}$ 
of $(X, I, M)$ is given by 
\begin{align}
\label{eq:generator}
\begin{split}
&  \mathfrak{L}f (\mathbf{x}, \mathfrak{i}, \mathfrak m)  = \sum\limits_{i \in \mathfrak{i}}\left(rx_i\frac{\partial f}{\partial x_i} + \frac12\sigma^2x_i^2\frac{\partial^2f}{\partial x_i^2}\right) \\ & {} + \lambda_{\mathfrak n(\bx)}(\mathfrak{s}(\bx))\int_0^{\infty}\left[f((\mathbf{x}, y), \mathfrak{i}\cup\{\mathfrak m+1\}, \mathfrak m + 1) - f( \mathbf x, \mathfrak{i}, \mathfrak m)\right]\mathcal B_{\mathfrak n(\bx), \mathfrak{s}(\bx)}(\md y) \\ & {} + \sum\limits_{i \in \mathfrak{i}}\kappa_{\mathfrak n(\bx)}
(\mathfrak{s}(\bx), x_i)\!\!\!\!\!\int\limits_{(0, 1)^{\mathfrak{n}(\mathbf{x})-1}}\!\!\!\!\left[f\left({\mathbf{x}}_{-i}\! \circ\! (\mathbf{e} - \mathbf{z}_{-i}), \mathfrak{i}\setminus\{i\}, \mathfrak m\right) - f(\mathbf{x}, \mathfrak{i}, \mathfrak m)\right]\cD_{\mathfrak n(\bx), x_i, \mathfrak{s}(\bx)}^{\otimes\mathfrak{n}(\mathbf{x})-1}(\md {\mathbf z}_{-i}),
\end{split}
\end{align}
where we denote by ${\mathbf{x}}_{-i}$ any vector $\mathbf{x} \in \mathcal X$ with its $i$-th component $x_i$ removed; $\mathbf{e}$ is the vector of units of size $\mathfrak n (x) - 1 $;
$\mathbf{z}_{-i}$ is a vector in $(0, 1)^{\mathfrak{n}(\mathbf{x}) - 1}$, $\circ$ is used for the Schur product, i.e., element-wise multiplication of vectors, and
$\mathcal Q^{\otimes m}$ is the direct product of $m$ copies of a probability measure $\mathcal Q$. The three terms on the different lines of \eqref{eq:generator} represent the continuous diffusion, births, and defaults of the banks, respectively.
The domain of $\mathfrak{L}$ in ~\eqref{eq:generator} is the space of functions $f : \Xi \to \mathbb R$ such that for every
$(\mathfrak{i}, \mathfrak m)$ the restriction $\mathbf{x} \mapsto f(\mathbf{x}, \mathfrak{i}, \mathfrak m)$ belongs to the space
$$
\mathcal C^2(\mathcal X) := \{f : \mathcal X \to \mathbb R \, :\,  \left.f\right|_{(0, \infty)^N} \in C^2((0, \infty)^N),\ \forall N = 1, 2, \ldots\}.
$$
A sum over the empty set is understood to be zero. Sometimes, abusing the notation slightly, we shall apply $\mathfrak{L}$ to a function $f : \mathcal X \to \mathbb R$, $f \in \mathcal C^2(\mathcal X)$, and regard $\mathfrak{L} f$ as a function only on $\mathcal X$, in effect ignoring auxiliary variables and concentrating only on the state space $\mathcal X$.

\smallskip

A sample path of $X$, together with the corresponding $S(\cdot) = \sum_{i \in I(\cdot)} X_{i}$ and $ N(\cdot)$, is shown in Figure \ref{fig: 1}. One can clearly observe the contagion mechanism: as one bank defaults, the other reserves also drop, which due to the increased $\kappa$ (default rate being hyperbolic in available reserves) is likely to trigger further defaults. Consequently, there is a self-excitation effect to the downward jumps of $N(t)$ (while the upward jumps corresponding to births have a constant rate $\lambda$).

\begin{figure}[t]
\begin{center}
\includegraphics[scale=0.4, trim=0.2in 0.2in 0.2in 0.2in]{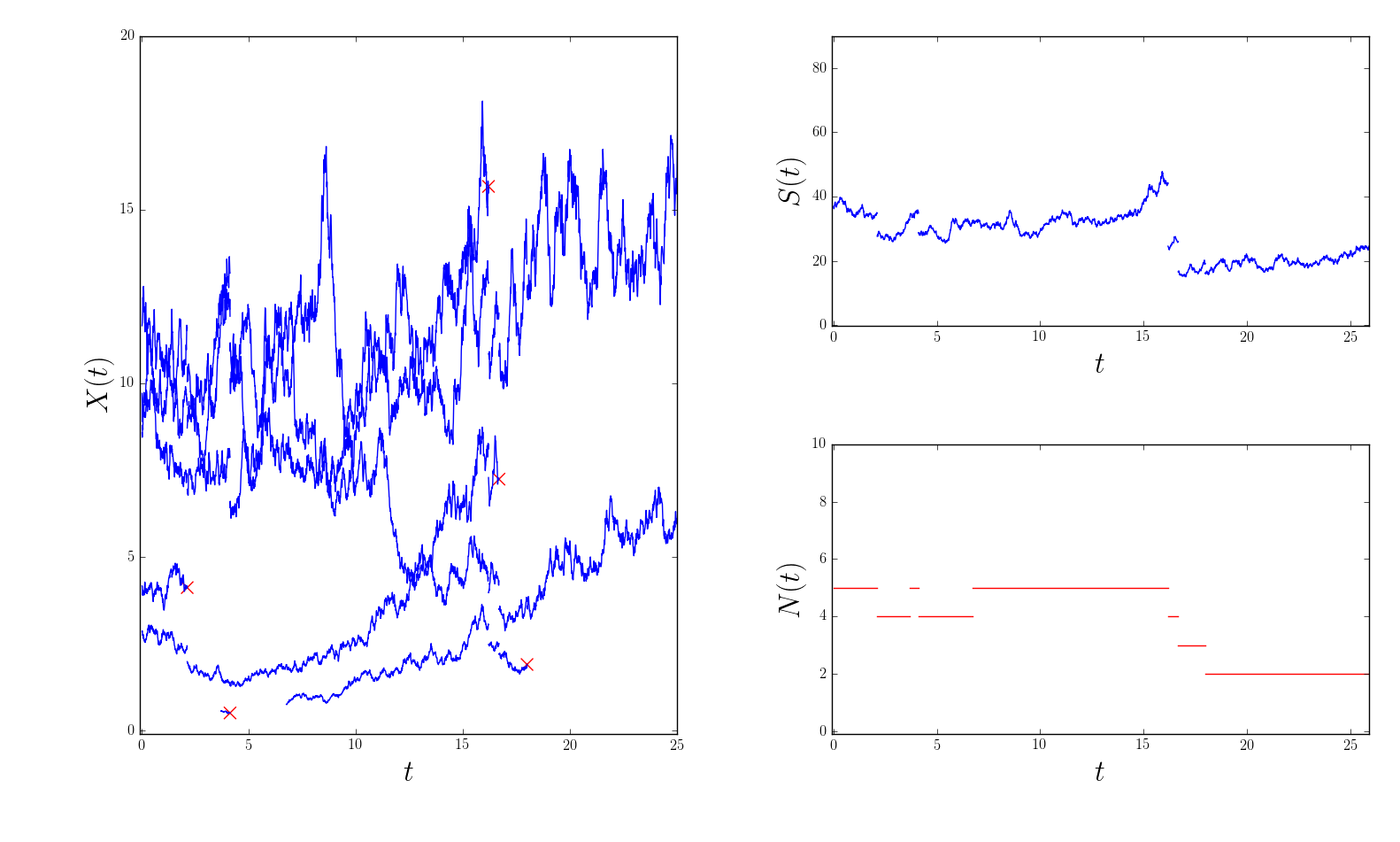}
\end{center}
\caption{The sample paths of $X, S, N$ with $N(0) := 5$, $\sigma := 0.2$, $r := 0.05$. The initial distribution of $\, X\,$ is i.i.d. standard log normal. Default rate is hyperbolic in bank reserves $x$: $\kappa_n(s,x) = 0.2\cdot n / (0.01+x)$. The contagion measure (default impact) is uniform, i.e., $\mathcal D_{n,s,x} \sim \Uni[0, n^{-1}]$, the birth rate is constant $\lambda_n(s) = 1$,  and the new bank distribution is $\mathcal B_{n,s} \sim \Exp(1)$ for every $n$ and $s$. The $\times$ markers in the left panel represent defaults. } \label{fig: 1}
\end{figure}

\section{Existence and Stability}\label{sec:stability}


\subsection{Conditions for existence}

The following two lemmas describe the elementary properties of the Markov transition kernel of $X$. First, the process $X$ is {\it totally irreducible}. That is, its transition kernel $P^t(\mathbf{x}, \cdot)$ is positive with respect to the Lebesgue measure $\mes$ on $\mathcal X$.
\begin{lemma} \label{lemma:totally-irreducible} For all Borel subsets $A \subseteq \mathcal X$ with $\mes(A) > 0$, we have:
\begin{equation}
\label{eq:totally-irreducible}
P^t(\mathbf{x}, A)  := \mathbb P ( X(t) \in A \,  \vert \, X(0) = \mathbf x)   > 0\quad \mbox{for all}\quad t > 0,\, \mathbf{x} \in \mathcal X.
\end{equation}
\end{lemma}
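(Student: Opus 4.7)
Since $\mes$ decomposes over the strata of $\mathcal{X}$ as the sum of the Lebesgue measures on each $(0,\infty)^N$ (plus a unit atom at $\varnothing$), the hypothesis $\mes(A)>0$ forces $A \cap (0,\infty)^{N^\ast}$ to have positive $N^\ast$-dimensional Lebesgue measure for at least one $N^\ast \ge 0$. It therefore suffices to bound $P^t(\mathbf{x},A\cap(0,\infty)^{N^\ast})$ from below by a strictly positive number. Setting $n_0 := \mathfrak{n}(\mathbf{x})$ and $k := |N^\ast - n_0|$, the plan is to exhibit an explicit path scenario on $[0,t]$: partition $[0,t]$ into $k+1$ equal subintervals $J_1,\ldots,J_{k+1}$; on each of $J_1,\ldots,J_k$, prescribe that exactly one dimension-changing jump of the correct kind occurs (a birth if $N^\ast>n_0$, a default of a designated bank if $N^\ast<n_0$) and no other jumps, while the continuous part of $X$ stays in a prescribed compact subset of the current stratum; on $J_{k+1}$, prescribe no jumps at all. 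By the strong Markov property,
\[
P^t(\mathbf{x},A)\;\ge\;\Big(\prod_{\ell=1}^{k}p_\ell\Big)\cdot q,
\]
where $p_\ell$ is the conditional probability of the event on $J_\ell$ and $q$ is the conditional probability of a no-jump trajectory on $J_{k+1}$ ending in $A\cap(0,\infty)^{N^\ast}$.

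The bulk of the argument is showing $p_\ell>0$ and $q>0$. For $q$: conditional on the underlying $N^\ast$-dimensional GBM staying in a chosen compact $K\subset(0,\infty)^{N^\ast}$, the total jump intensity $\lambda_{N^\ast}+\sum_i \kappa_{N^\ast}(\cdot,x_i)$ is bounded by some finite $C_K$ (local boundedness of $\lambda$ and $\kappa$), so the no-jumps probability is at least $\exp(-C_K|J_{k+1}|)>0$; since the GBM transition kernel has strictly positive density on $(0,\infty)^{N^\ast}\times(0,\infty)^{N^\ast}$, integrating against the positive Lebesgue measure of $A\cap(0,\infty)^{N^\ast}$ yields $q>0$. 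For each $p_\ell$: the standing positivity of $\lambda_n(s)$ and $\kappa_n(s,x)$, together with a standard thinning/compensator estimate, gives strictly positive probability for exactly one correct jump to occur in a short interval while the continuous component remains in a compact set.

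The delicate point is that the birth and default kernels $\mathcal{B}_{n,s}$ and $\mathcal{D}_{n,s,x}$ are allowed to be singular (e.g.\ atomic), so a single jump may leave the configuration on a lower-dimensional subset of its stratum. The workaround is to split each $J_\ell$ into an ``early'' half during which the jump happens and a ``late'' half of pure GBM evolution; the strictly positive GBM transition density immediately smears any atomic mass into a density on an open region of the new stratum, so that the next step can start from a point whose conditional law has a positive density. Formally, I would write the law of $X$ restricted to the prescribed jump pattern as a Radon--Nikodym derivative with respect to the product of GBM laws (one per inter-jump interval) and the jump-size kernels; each factor in this representation is strictly positive on the integration region, which itself has positive product measure, so the resulting lower bound on $P^t(\mathbf{x},A)$ is positive.
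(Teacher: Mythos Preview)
Your argument is correct and rests on the same two ingredients as the paper's proof: (i) on a fixed stratum, the no-jump event has positive probability because the jump intensities are locally bounded, and the remaining pure GBM has strictly positive transition density; (ii) one can reach any target stratum with positive probability because birth and default rates are everywhere positive. The difference is in how step (ii) is organized. You build the change of level constructively, partitioning $[0,t]$ into $k+1$ subintervals and forcing exactly one jump of the correct type in each of the first $k$, which leads you to worry about singular birth/default kernels and to insert GBM ``smearing'' half-intervals. The paper instead lets $\tau'$ be the first time the process hits level $N^\ast$, argues directly that $\mathbb{P}(\tau'<t)>0$ from positivity of the rates, and then applies the strong Markov property together with the same-level case (i) starting from $X(\tau')$; since case (i) works from \emph{any} starting point on level $N^\ast$, the possible singularity of the jump kernels never needs to be addressed explicitly. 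Your route is more hands-on and would generalize more readily if one needed quantitative lower bounds on $P^t(\mathbf{x},A)$; the paper's hitting-time reduction is shorter and avoids the bookkeeping of the interval partition.
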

\noindent This {\it positivity property} is used 
for the stability of $X$. The proof of Lemma~\ref{lemma:totally-irreducible} is  in
Section \ref{sec:Lemma31}.


\smallskip

Second, due to local boundedness of the intensities of birth and default, $X$ satisfies the Feller property.  The following Lemma follows from construction of $X$ by {\it patching}: constructing the continuous parts jump-after-jump; see \cite{Bass1979, Sawyer}.
\begin{lemma}
\label{lemma:Feller}
The process $X$ is Feller continuous: That is, for any bounded continuous function $f : \mathcal X \to \mathbb R$, with convention that $f(\Delta) = 0$, where $\Delta$ is the (isolated) cemetery state, the function $P^tf$ is also bounded and continuous for every $t > 0$.
\end{lemma}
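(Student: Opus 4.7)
My strategy follows the patching construction referred to in the statement and implemented in \cite{Bass1979, Sawyer}: the process is built inductively on each inter-jump interval $[\tau_k, \tau_{k+1})$ by running a geometric Brownian motion, and at each $\tau_k$ applying a birth or default kernel. To verify that $P^t f$ is continuous at $\bx \in (0, \infty)^N \subset \mathcal X$, note that each component $(0, \infty)^N$ of $\mathcal X$ is open in the disconnected topology, so it suffices to test continuity along sequences $\bx^{(m)} \to \bx$ inside $(0, \infty)^N$. Let $J(t)$ denote the number of jumps on $[0, t]$; I would decompose
\begin{equation*}
P^t f(\bx) = \sum_{k=0}^{\infty} \E_{\bx}\bigl[f(X(t))\,\mathbf 1_{\{J(t) = k < \infty\}}\bigr]
\end{equation*}
(paths with $J(t) = \infty$ contribute nothing since $f(\Delta) = 0$) and establish continuity of each summand together with uniform smallness of the tail in $k$.

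For termwise continuity, the $k = 0$ contribution equals $\E\bigl[f(\bx \circ e^{\mu t \mathbf 1 + \sigma W(t)})\exp(-\int_0^t \Lambda(X(u))\,\md u)\bigr]$, where $\circ$ is componentwise multiplication and
\begin{equation*}
\Lambda(\bx') := \lambda_{\mathfrak n(\bx')}(\mathfrak s(\bx')) + \sum_{i} \kappa_{\mathfrak n(\bx')}(\mathfrak s(\bx'), x'_i)
\end{equation*}
is the total state-dependent hazard. Since $f$ is bounded and continuous and GBM trajectories are a.s.\ continuous in the initial point, dominated convergence gives continuity in $\bx$. For $k \ge 1$, the event $\{J(t) = k\}$ expands as an iterated integral over the $k$ waiting times, the birth-or-default choices, and the draws from $\mathcal B_{n, s}$ or $\cD_{n, s, x_i}^{\otimes(n-1)}$; each ingredient depends continuously on the pre-jump state by the assumed continuous regularity of the rate functions $\lambda_n, \kappa_n$ and of the jump measures, so dominated convergence again yields continuity of the $k$-th summand.

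To upgrade termwise continuity to continuity of the full sum, I would establish uniform tail control of $J(t)$ over a neighborhood $U$ of $\bx$. By local boundedness of $\lambda$ and $\kappa$, together with standard moment control for geometric Brownian motion, for any $\varepsilon > 0$ there is a finite $\overline\Lambda$ and a set of paths of probability $\ge 1 - \varepsilon$ (uniformly over $\bx^{(m)} \in U$) on which the instantaneous jump rate stays below $\overline\Lambda$ up to time $t$. On that set $J(t)$ is stochastically dominated by $\Poi(\overline\Lambda t)$, hence
\begin{equation*}
\Bigl|\sum_{k \ge K} \E_{\bx^{(m)}}\bigl[f(X(t))\mathbf 1_{\{J(t) = k\}}\bigr]\Bigr| \le \norm{f}\bigl(\PP(\Poi(\overline\Lambda t) \ge K) + \varepsilon\bigr),
\end{equation*}
which can be made arbitrarily small uniformly in $\bx^{(m)} \in U$ by first taking $K$ large, then $\varepsilon$ small. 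Combined with termwise continuity this gives $P^t f(\bx^{(m)}) \to P^t f(\bx)$; the bound $\norm{P^t f} \le \norm{f}$ is immediate.

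The main obstacle is precisely this uniform tail/cascade bound: a priori, starting close to $\bx$ the system could suffer a long cascade of defaults on a short interval, because each default lowers reserves and therefore raises $\kappa$, potentially triggering further defaults and sending the process into a different component of $\mathcal X$ or to the cemetery $\Delta$. Local boundedness of $\lambda$ and $\kappa$ together with the Poisson domination above is what rules out more than $K$ such events with appreciable probability and makes the jump-after-jump patching construction of \cite{Bass1979, Sawyer} go through in the present variable-dimension setting.
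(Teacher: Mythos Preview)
Your proposal is correct and follows the same patching (jump-after-jump) approach that the paper invokes; the paper does not actually spell out a proof but simply records that the Feller property ``follows from construction of $X$ by patching: constructing the continuous parts jump-after-jump; see \cite{Bass1979, Sawyer}.'' Your decomposition by the number of jumps $J(t)$, termwise continuity via dominated convergence, and Poisson domination for the tail are exactly the ingredients behind those references, so you are filling in the details the paper omits rather than taking a different route.

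One remark: your argument leans on continuous dependence of the rates $\lambda_n,\kappa_n$ and the jump kernels $\mathcal B_{n,s},\,\cD_{n,s,x}$ on their parameters, and on the uniform-in-$U$ high-probability confinement needed for the $\overline{\Lambda}$-domination. The paper only explicitly records \emph{local boundedness} of the intensities at this stage, so your write-up is in fact more explicit than the paper about which regularity is being used. The cascade concern you flag (a default shrinks reserves, possibly inflating $\kappa$) is real in the examples with $\kappa$ blowing up near zero; the clean way to close it is to note that the total default rate at level $n$ is at most $n\sup_{x,s}\kappa_n(s,x)$ when $\kappa_n$ is bounded, or otherwise to couple with the pure-birth/pure-death bounds the paper uses later (Section~7.4) to get the needed uniform tail on $J(t)$.
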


We proceed to state some sufficient conditions when $X$ is {\it conservative},  i.e.,~well-defined on the infinite time horizon so that  $\tau_{\infty} = \infty$ a.s. We sometimes say in this case that the system {\it does not explode}. To this end, it suffices to find a {\it Lyapunov function}. This is a standard tool to prove that a random process is conservative or stable: See for example classic papers \cite{MT1993a, MT1993b, DMT1995}. Essentially, for our proof that $X$ is conservative, we need a function $V : \mathcal X \to [0, \infty)$ such that:

\smallskip

(a) for every $c > 0$ the set $\{\mathbf{x} \in \mathcal X\mid V(\bx) \le c\}$ is compact (informally ``$V(\infty) = \infty$'');

\smallskip

(b) $V \in C^2(\mathcal X)$, and for some constants $k, c > 0$, $\mathfrak{L} V(\bx) \le kV(\bx) + c$ for all  $\mathbf{x} \in \mathcal X$.

\smallskip

For our setting, let us take the following Lyapunov function:
\begin{equation}
\label{eq:Lyapunov-example}
V_0(\bx) := \mathfrak{s}(\bx) + \mathfrak n(\bx), \quad \mathbf x \in \mathcal X .
\end{equation}
This function trivially belongs to $C^2(\mathcal X)$. By construction of the topology on $\mathcal X$, the function $V_{0}$ from~\eqref{eq:Lyapunov-example} satisfies the property (a) above. Plugging ~\eqref{eq:Lyapunov-example} in ~\eqref{eq:generator}, we get after calculations (recall that $\overline{\mathcal B}$ and $\overline{\mathcal D}$ are the means of the respective distributions):
\begin{align}
\label{eq:Lyapunov-result}
\begin{split}
\varphi(\bx)  & := \mathfrak{L}  V_{0}(\bx)   =  r\mathfrak{s}(\bx)  + \lambda_{\mathfrak n(\bx)}(\mathfrak{s}(\bx))\left[\overline{\mathcal B}(\mathfrak n(\bx), \mathfrak{s}(\bx)) + 1\right] \\ & \qquad  \qquad  \qquad {} - \sum_{j =1}^{\mathfrak n(\bx)}\kappa_{\mathfrak n(\bx)}(\mathfrak{s}(\bx), x_j)\left[\overline{\cD}(\mathfrak n(\bx), \mathfrak{s}(\bx), x_i)\mathfrak{s}(\bx) + 1\right]  \\ & \qquad \qquad \qquad  {} - \sum_{i=1}^{\mathfrak n(\mathbf x)}x_i\kappa_{\mathfrak n(\bx)}(\mathfrak{s}(\bx), x_i)\left(1 - \overline{\cD}(\mathfrak n(\bx), \mathfrak{s}(\bx), x_i)\right).
\end{split}
\end{align}
Under some assumption on this function $\varphi(\cdot) $, we claim the following. Its proof is in Section \ref{sec:proofofconserva}.
\begin{thm} Assume there exist positive constants $c_1, c_2, c_3$ such that $\varphi(\cdot) $ in \eqref{eq:Lyapunov-result} satisfies
$$
\varphi(\bx) \le c_1\mathfrak{s}(\bx) + c_2\mathfrak n(\bx) + c_3\quad\mbox{for}\quad \mathbf{x} \in \mathcal X.
$$
Then the system exists and is conservative: It does not explode.
\label{thm:conservative}
\end{thm}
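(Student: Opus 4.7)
The plan is a standard Foster--Lyapunov argument with $V_0(\bx) = \mathfrak{s}(\bx) + \mathfrak{n}(\bx)$ as the test function. With $k := \max(c_1, c_2)$, the hypothesis and the identity $\mathfrak{L} V_0 = \varphi$ combine into the drift inequality
$$
\mathfrak{L} V_0(\bx) \le k\, V_0(\bx) + c_3, \qquad \bx \in \mathcal X,
$$
which is the classical setting in which one expects non-explosion. The goal is to promote this inequality to a uniform-in-time moment bound on $V_0(X(t))$ and deduce that $\tau_\infty = \infty$ almost surely.

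Next, I would introduce a localizing sequence $\sigma_n \uparrow \tau_\infty$, for instance
$$
\sigma_n := \inf\bigl\{t \ge 0 \,:\, V_0(X(t)) \ge n \text{ or } \min\nolimits_{i \in I(t)} X_i(t) \le 1/n \bigr\}.
$$
On $[0, \sigma_n]$ both the coordinates and the dimension $\mathfrak{n}(X(\cdot))$ lie in a relatively compact subregion of $\mathcal{X}$, so all birth and default intensities are bounded and the Dynkin-formula compensator $V_0(X(\sigma_n \wedge t)) - V_0(X(0)) - \int_0^{\sigma_n \wedge t} \mathfrak{L} V_0(X(s)) \, \md s$ defines a genuine martingale. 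Taking expectations, substituting the drift bound, and applying Gronwall's lemma yields the uniform estimate
$$
\E V_0(X(\sigma_n \wedge t)) \le \bigl(V_0(X(0)) + c_3 t\bigr)\, e^{k t}, \qquad n \in \mathbb N,\ t \ge 0.
$$

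To conclude $\tau_\infty = \infty$ a.s., I would argue by contradiction using Fatou. On $\{\tau_\infty \le t\}$ the sequence $\sigma_n$ converges to $\tau_\infty \le t$, so one of the two cut-offs in the definition of $\sigma_n$ must be activated for every large $n$. Infinitely many jumps in a bounded interval can only occur if $V_0(X(\sigma_n))$ or the auxiliary cut-off quantity diverges — otherwise the total intensity $\lambda + \sum_i \kappa$ would stay bounded on $[0, \tau_\infty)$ and only finitely many jumps could occur there. Thus $V_0(X(\sigma_n)) \to \infty$ on this event, and Fatou's lemma combined with the uniform moment bound forces $\PP(\tau_\infty \le t) = 0$ for every $t > 0$, which is the desired conservativity.

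The main obstacle is not the algebraic manipulation but the non-compactness of the sublevel sets $\{V_0 \le c\}$ in the topology on $\mathcal{X}$: a coordinate $X_i$ may approach $0$, at which point default intensities such as the hyperbolic $\kappa$ in Figure~\ref{fig: 1} can blow up and generate additional explosion scenarios not directly controlled by $V_0$. Handling this cleanly requires the auxiliary cut-off $\min_i X_i \ge 1/n$ in $\sigma_n$ together with a separate verification -- relying on the positivity of geometric Brownian motions between jumps and on $\xi_{ji} \in (0,1)$ (so that default jumps cannot send any surviving $X_j$ to $0$) -- that this cut-off also increases to $\tau_\infty$ and does not itself create a missed mode of explosion.
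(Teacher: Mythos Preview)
Your approach is correct and is, at its core, the same Foster--Lyapunov idea the paper uses: the paper's proof is a one-line appeal to the Meyn--Tweedie theory \cite{MT1993a,MT1993b} (together with Lemmata~\ref{lemma:totally-irreducible} and~\ref{lemma:Feller} and \cite[Proposition~2.2, Lemma~2.3]{MyOwn10}), whereas you unroll that machinery by hand via Dynkin's formula, Gronwall, and Fatou. Both routes rest on the same inequality $\mathfrak{L}V_0 \le kV_0 + c_3$ with $k=\max(c_1,c_2)$, so there is no genuine methodological difference---your version is simply self-contained rather than citation-based.

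Where your write-up adds value is the last paragraph: you correctly observe that the sublevel sets $\{V_0 \le c\}$ are \emph{not} compact in $\mathcal X$ (a coordinate can drift toward $0$), despite the paper's assertion to the contrary just before the theorem. The paper effectively absorbs this issue into the cited references and the piecewise construction (geometric Brownian motion stays strictly positive between jumps, and default multipliers $1-\xi_{ji}$ lie in $(0,1)$, so no surviving coordinate can reach $0$; hence an accumulation of jumps forces either $\mathfrak n(\bx)$ or $\mathfrak s(\bx)$ to diverge). Your explicit second cut-off $\min_i X_i \ge 1/n$ and the accompanying remark make this point transparent, which is a genuine improvement in rigor over the paper's terse treatment.
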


\begin{exm} The simplest conservative example  is described when the birth $\lambda$ and default rates $\kappa$ are independent of $N(t)$ and $S(t)$. Then the number $N(t)$ of banks at time $t$ forms a birth-death process with birth intensity $\lambda$ and death intensity $\kappa N(t)$. Hence, we can apply the usual sufficient conditions for this process being conservative. If this process is conservative, then the whole system is also conservative, since on each level $\{\mathfrak n(\bx) = N\}$ (for a given number $N$ of banks), the system behaves as a collection of independent geometric Brownian motions.
\end{exm}



\subsection{Stability of the system} On a macro-level, to obtain stability we need some balance between births and defaults. Recall that A probability measure $\Pi$ on $\mathcal X$ is called a {\it stationary distribution} or {\it invariant measure} for the system above if the following holds: If we start $X(0) \sim \Pi$, then for all $t \ge 0$, we remain at $X(t) \sim \Pi$. The system is called {\it stable} if it is nonexplosive, there exists a unique stationary distribution $\Pi$, and for every given initial condition $X(0) \in \mathcal X$, the distribution of $X(t)$ converges to $\Pi$ as $t \to \infty$ in the total variation distance:
\begin{equation}
\label{eq:ergodic}
\lim\limits_{t \to \infty}\sup\limits_{A \subseteq \mathcal X}\left|\PP(X(t) \in A) - \Pi(A)\right| = 0.
\end{equation}

\begin{thm} \label{thm:stable}
The system is stable if the set $\{\mathbf{x} \in \mathcal X\mid \varphi(\bx) > -\varepsilon\}$ is compact for some $\varepsilon > 0$, i.e., the function $\varphi$ from~\eqref{eq:Lyapunov-result} satisfies $\varlimsup_{\mathbf{x} \to \infty}\varphi(\bx) < 0$. 
\end{thm}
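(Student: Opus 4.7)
\medskip

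\noindent\textbf{Proof plan for Theorem \ref{thm:stable}.} The strategy is to apply the Meyn--Tweedie stability framework for continuous-time Markov processes (\cite{MT1993a, MT1993b, DMT1995}) with the same Lyapunov function $V_{0}(\bx) = \mathfrak{s}(\bx) + \mathfrak{n}(\bx)$ that was used in Theorem \ref{thm:conservative}. The hypothesis $\varlimsup_{\bx \to \infty}\varphi(\bx) < 0$ says that there exist $\varepsilon > 0$ and a compact set $K \subset \mathcal X$ such that $\mathfrak{L}V_{0}(\bx) = \varphi(\bx) \le -\varepsilon$ for all $\bx \notin K$. Since $\varphi$ is built from finitely many continuous functions of $\mathfrak{n}(\bx),\mathfrak{s}(\bx)$, and the coordinates $x_{i}$, it is bounded on the compact set $K$ by some constant $M$. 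Therefore
$$
\mathfrak{L}V_{0}(\bx) \;\le\; -\varepsilon \;+\; (M+\varepsilon)\,\mathbf{1}_{K}(\bx), \qquad \bx \in \mathcal X,
$$
which is exactly the Foster--Lyapunov drift condition (CD3 of \cite{MT1993b}). Conservativeness of $X$ under this stronger hypothesis follows a fortiori from Theorem \ref{thm:conservative}, so the process is defined for all $t \ge 0$.

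\medskip

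Next, I would verify the structural hypotheses that turn the drift inequality into stability. Lemma \ref{lemma:totally-irreducible} gives $\psi$-irreducibility of the semigroup $(P^{t})$ with irreducibility measure $\psi = \mes$: every set of positive Lebesgue measure is reached with positive probability from every starting point. Lemma \ref{lemma:Feller} gives the Feller property. To invoke the results of \cite{MT1993b, DMT1995} I need all compact sets to be petite for some sampled chain. I would argue this by passing to a skeleton: fix $\Delta > 0$ and consider the discrete-time chain $Y_{k} := X(k\Delta)$. Using the patched construction of $X$ between jump times, one shows that with positive probability no birth or default occurs on $[0,\Delta]$, in which case $X(\Delta)$ restricted to the current stratum $(0,\infty)^{N}$ has a strictly positive smooth density coming from the geometric Brownian motion flow. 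Combined with Lemma \ref{lemma:totally-irreducible}, this yields a minorization $P^{\Delta}(\bx, \cdot) \ge \beta\,\nu(\cdot)$ on each compact set (for some nontrivial $\nu$ absolutely continuous with respect to $\mes$), so compact sets are small, hence petite, for the $\Delta$-skeleton. Aperiodicity follows automatically from the continuous-time sampling.

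\medskip

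With the drift condition, $\psi$-irreducibility, and petiteness of compact sets in hand, Theorem 6.1 of \cite{MT1993b} (or Theorem 5.2 of \cite{DMT1995}) yields positive Harris recurrence: a unique invariant probability measure $\Pi$ exists, and for every initial condition $X(0) \in \mathcal X$ the distribution of $X(t)$ converges to $\Pi$ in total variation as $t \to \infty$, which is exactly \eqref{eq:ergodic}. The integrability $(\Pi, V_{0}) < \infty$ also follows from the drift inequality.

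\medskip

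\noindent\textbf{Main obstacle.} The non-routine part is the petiteness of compact sets on the disconnected, varying-dimension state space $\mathcal X$. Standard arguments on $\mathbb R^{N}$ exploit ellipticity of a diffusion to get smooth densities, but here the generator \eqref{eq:generator} mixes diffusion with birth/death jumps between different strata $(0,\infty)^{N}$. The cleanest fix is the skeleton argument sketched above: isolate the event of no jumps on $[0,\Delta]$, which has positive probability uniformly on compacts because the birth and default intensities are locally bounded, and on that event use the explicit geometric Brownian motion transition density to produce the required minorization. Verifying that the local boundedness of $\lambda$ and $\kappa$ truly holds uniformly on compacts (rather than just pointwise) is where one must be careful; it is implicit in the continuity assumptions but deserves explicit treatment.
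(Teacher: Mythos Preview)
Your approach is essentially the same as the paper's. The paper's proof is a single sentence invoking \cite{MT1993a, MT1993b} together with Lemmata~\ref{lemma:totally-irreducible} and~\ref{lemma:Feller}, and outsourcing the petiteness-of-compacts step to \cite[Proposition 2.2, Lemma 2.3]{MyOwn10}; you have unpacked exactly this argument, supplying the Foster--Lyapunov drift inequality explicitly and sketching how Feller plus total irreducibility yields petiteness of compact sets. One small remark on your skeleton argument: since a compact $K\subset\mathcal X$ may intersect several strata $(0,\infty)^N$, the ``no jump on $[0,\Delta]$'' event alone gives minorization measures living on different strata for different starting points, so you do need the total irreducibility of Lemma~\ref{lemma:totally-irreducible} (which you already flag) to route all starting points to a common small set---this is precisely what the cited results in \cite{MyOwn10} package, so there is no real gap.
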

This result immediately follows from \cite{MT1993a, MT1993b} and Lemmata~\ref{lemma:totally-irreducible}, \ref{lemma:Feller}, \cite[Proposition 2.2, Lemma 2.3]{MyOwn10}.
From the definition of compactness in $\mathcal X$ from Section 2, there exist constants $\varphi_0, s_0, N_0 >0$ such that $\varphi(\bx) \le -\varphi_0$ for $\mathfrak{s}(\bx) \ge s_0$ or $\mathfrak n(\bx) \ge N_0$.

\begin{exm} A simple condition for stability is to have banks with finite lifetime, i.e.,~the default time $\tau_i$ of any bank $i$ is finite a.s. In that  case the system will be stable as long as the birth rate remains bounded.  Assume $\mathcal B_{n, s}$ and $\cD_{n, s,x}$ depend only on $n$, and
$$
\lambda_* := \sup\limits_{n}\frac1n\lambda_n(s) < \infty,\ \mbox{and}\ \kappa_n(s,x_i) \equiv g(x_i),
$$
for some decreasing function $g : (0, \infty) \to \mathbb R_+$, with $g(0+) = +\infty$ and $g(+\infty) =: \kappa_* > 0$. Then each bank has finite lifetime, which is dominated from above by an exponential random variable with rate $\kappa_*$. Therefore, the quantity of banks is stochastically dominated by a birth-death process with birth intensities $\lambda_* n$ and death intensities $\kappa_*n$ at level $n \ge 1$. If $\kappa_* > \lambda_*$ then this birth-death process is stable. Combining this observation with the independence of $\mathcal B_{n, s}$ and $\mathcal D_{n, s, x}$ of $x$ and $s$, we get that the whole system $X$ is stable.
\end{exm}

\subsection{Refinements of stability results}
A stronger convergence than~\eqref{eq:ergodic} (in the {\it total variation distance} from~\eqref{eq:TV}) can happen exponentially fast as $t$ grows: There exist positive constants $C$ (depending on the initial condition $X(0)$) and $\alpha$ such that
\begin{equation}
\label{eq:exp-ergodic}
\sup\limits_{A \subseteq \mathcal X}\left|\PP(X(t) \in A) - \Pi(A)\right| \le Ce^{-\alpha t}.
\end{equation}

\begin{thm} \label{thm:stability}
The system $X$ satisfies~\eqref{eq:exp-ergodic} if there exist constants $c_1, c_2, c_3 > 0$ such that
\begin{equation}
\label{eq:1205}
\varphi(\bx) \le -c_1\mathfrak s(\mathbf x) - c_2\mathfrak n(\mathbf x) + c_3,\quad \mathbf x \in \mathcal X,
\end{equation}
where $\varphi(\bx)$ is defined in \eqref{eq:Lyapunov-result}. More generally, the system satisfies a stronger convergence statement:  For the function $V_0$ defined in~\eqref{eq:Lyapunov-example}, there exist positive constants $C$ and $\alpha$ such that for all $\bx \in \mathcal X$, the transition function $P^t(\bx, \cdot)$ of the Markov process $X$ satisfies
\begin{equation}
\label{eq:exp-ergodic-V}
\norm{P^t(\bx, \cdot) - \Pi(\cdot)}_{V_0} \le CV_0(\bx)e^{-\alpha t}.
\end{equation}
\end{thm}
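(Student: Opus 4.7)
The plan is to invoke the Meyn--Tweedie $V$-uniform exponential ergodicity framework. The hypothesis $\varphi(\bx) \le -c_1\mathfrak s(\bx) - c_2\mathfrak n(\bx) + c_3$ rewrites directly as the geometric Foster--Lyapunov inequality
$$
\mathfrak L V_0(\bx) \le -\alpha_0 V_0(\bx) + c_3, \qquad \alpha_0 := \min(c_1, c_2),
$$
since $V_0(\bx) = \mathfrak s(\bx) + \mathfrak n(\bx)$. A routine splitting then upgrades this to the (V4)-type bound
$$
\mathfrak L V_0(\bx) \le -\tfrac{\alpha_0}{2} V_0(\bx) + b\, \mathbf 1_C(\bx), \qquad \bx \in \mathcal X,
$$
where $C := \{V_0 \le 2c_3/\alpha_0\}$ is compact in $\mathcal X$ by the topological description of compactness given in Section~\ref{sec:construction}, and $b$ depends only on $\alpha_0$, $c_3$, and $\sup_C V_0$.

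To convert this drift inequality into exponential convergence in the $V_0$-norm, I need to verify that every compact subset of $\mathcal X$ is \emph{petite} for some skeleton chain $(X(n\delta))_{n \ge 0}$, $\delta > 0$. I would follow the scheme already invoked for Theorem~\ref{thm:stable} via \cite[Proposition 2.2, Lemma 2.3]{MyOwn10}: total irreducibility of $P^t(\bx, \cdot)$ with respect to the Lebesgue measure $\mes$ (Lemma~\ref{lemma:totally-irreducible}) combined with the Feller continuity of $P^t$ (Lemma~\ref{lemma:Feller}) produces, for each fixed $\delta > 0$ and compact $C \subset \mathcal X$, a uniform minorization $P^\delta(\bx, \cdot) \ge \epsilon\, \nu_C(\cdot)$ for $\bx \in C$, with $\nu_C$ a nontrivial measure supported on a small ball inside a single level $(0, \infty)^{N_\ast}$. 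This is precisely the petiteness property, and together with irreducibility and the implicit aperiodicity of the continuous-time kernel places $X$ in the Harris-recurrent regime.

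With (V4) and petiteness of sublevel sets in hand, \cite[Theorem 6.1]{DMT1995} (or equivalently \cite[Theorem 5.2]{MT1993b}) produces constants $C, \alpha > 0$ for which
$$
\norm{P^t(\bx, \cdot) - \Pi(\cdot)}_{V_0} \le C V_0(\bx) e^{-\alpha t},
$$
which is \eqref{eq:exp-ergodic-V}; specializing to test functions with $|f| \le 1 \le V_0$ then recovers \eqref{eq:exp-ergodic}. The main technical obstacle will be the petiteness verification on the stratified state space $\mathcal X = \bigcup_{N \ge 0}(0, \infty)^N$, which is not locally compact in the usual sense. One must show that the $\delta$-step kernel not only charges every level but does so with a density against $\mes$ that is bounded below on compacts, uniformly in the starting point $\bx \in C$; this in turn requires tracing through the patching construction underlying the Feller property to derive a quantitative version of Lemma~\ref{lemma:totally-irreducible}. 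The drift penalty $-c_2 \mathfrak n(\bx)$ is what keeps the chain effectively within finitely many levels, closing the argument.
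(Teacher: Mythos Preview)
Your proposal is correct and follows essentially the same route as the paper: reduce to $c := \min(c_1,c_2)$ so that $\mathfrak L V_0 = \varphi \le -cV_0 + c_3$, split off a compact sublevel set of $V_0$ to get the (V4) drift condition, and then invoke Lemmata~\ref{lemma:totally-irreducible} and~\ref{lemma:Feller} together with the Meyn--Tweedie machinery \cite{MT1993a, MT1993b, MyOwn10} to conclude \eqref{eq:exp-ergodic-V}. Your only deviation is cosmetic---you worry that $\mathcal X$ is ``not locally compact in the usual sense,'' but in fact each level $(0,\infty)^N$ is open and locally compact, so the petiteness-of-compacts verification via Feller continuity and positivity goes through exactly as in the references, without needing a quantitative refinement of Lemma~\ref{lemma:totally-irreducible}.
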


\begin{exm}  Assume the following parameters do not depend on $n$, $x$, and $s$:
$$
\lambda_n(s) \equiv \lambda,\quad \kappa_n(s,x) \equiv \kappa,\quad \overline{\mathcal B}(n, s) \equiv \overline{\mathcal B},\quad \overline{\cD}(n, s, x) \equiv \overline{\cD}
$$
with some constants $\lambda ,\kappa ,\overline{\mathcal B}, \overline{\cD} > 0$. Then  the function $\varphi$ from~\eqref{eq:Lyapunov-result} becomes
$$
\varphi(\bx) = r\mathfrak{s}(\bx) + \left[\overline{\mathcal B} +1\right]\lambda - \overline{\cD} \kappa\cdot\mathfrak{s}(\bx)\mathfrak{n}(\bx) - \kappa \mathfrak{n} (\bx) - \kappa(1 - \overline{\cD})\cdot\mathfrak{s}(\bx).
$$
Since $\mathfrak n(\bx) \ge 1$ for $\mathbf{x} \in \mathcal X\setminus\{0\}$, the condition of Theorem~\ref{thm:stable} holds when $\kappa(1 - \overline{\cD}) > r$; that is, when the intensity of defaults, adjusted by the average contagion effect exceeds the growth rate of non-defaulting bank reserves.
\end{exm}

Finally, we can sometimes find an {\it explicit} estimate for the rate $\alpha$ of exponential convergence. This is done using the {\it coupling} argument from \cite{LMT1996, MyOwn12, MyOwn16}.

\begin{thm} \label{thm:explicit-rate} Assume $\lambda_n(y) \le \lambda^*_n$ and $\kappa_n(y, x) \ge \kappa^*_n$ for all $n, x, y$. Take a nondecreasing function $\hat{V} : \{0, 1, 2, \ldots\} \to [1, \infty)$ such that $\hat{V}(0) = 1$, and
\begin{equation}
\label{eq:exact-L}
\lambda^*_n\hat{V}(n+1) + n\kappa^*_n\hat{V}(n-1) - (\lambda^*_n + n\kappa^*_n)\hat{V}(n) \le -\alpha V(n),\quad n = 1, 2, \ldots
\end{equation}
Define $\tilde{V} : \mathcal X \to [1, \infty)$ via $\tilde{V}(\bx) := \hat{V}(\mathfrak{n}(\bx))$. (The function $\tilde{V}$ depends only on the quantity of components in the vector $\bx$.) Then there exists a positive constant $C$ such that
$$
\norm{P^t(\bx, \cdot) - \Pi(\cdot)}_{\TV} \le C\tilde{V}(\bx)e^{-\alpha t},\quad \bx \in \mathcal X,\quad t \ge 0.
$$
\end{thm}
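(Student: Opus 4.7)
The plan is to stochastically dominate $N(t) := \mathfrak n(X(t))$ by an explicit pure birth--death chain on $\mathbb Z_+$ and then invoke the quantitative coupling framework of \cite{LMT1996}.

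First, I construct a birth--death chain $\tilde N = (\tilde N(t))_{t \ge 0}$ on $\mathbb Z_+$ with birth intensity $\lambda^*_n$ and death intensity $n\kappa^*_n$ from state $n$, and I couple it pathwise with $N(t)$ by Poisson thinning so that $N(0) = \tilde N(0) = \mathfrak n(\bx)$ and $N(t) \le \tilde N(t)$ for every $t \ge 0$ almost surely. This is possible because, under the standing hypothesis, the instantaneous birth rate of $N$ is $\lambda_{N(t)}(S(t)) \le \lambda^*_{N(t)}$, while the instantaneous total death rate is $\sum_{i \in I(t)} \kappa_{N(t)}(S(t),X_i(t)) \ge N(t)\kappa^*_{N(t)}$.

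Second, I recognize \eqref{eq:exact-L} as the statement that the generator $\cL_{\tilde N}$ of $\tilde N$ satisfies $\cL_{\tilde N} \hat V(n) \le -\alpha \hat V(n)$. Setting $T_0 := \inf\{t \ge 0 : \tilde N(t) = 0\}$, an application of Dynkin's formula shows that $e^{\alpha (t \wedge T_0)}\hat V(\tilde N(t \wedge T_0))$ is a nonnegative supermartingale, so optional stopping yields $\E_n[e^{\alpha T_0}] \le \hat V(n)$. Because $\hat V$ is nondecreasing and $N(t) \le \tilde N(t)$, the same exponential-moment bound transfers to the first visit time $\tau_\varnothing := \inf\{t \ge 0 : X(t) = \varnothing\}$:
$$
\E_\bx\bigl[e^{\alpha \tau_\varnothing}\bigr] \le \tilde V(\bx).
$$
As a by-product, $\tilde N$ (and therefore $N$) is positive recurrent; combining this with Lemmata~\ref{lemma:totally-irreducible}--\ref{lemma:Feller} yields existence and uniqueness of a stationary law $\Pi$ for $X$.

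Third, I apply the splitting coupling of \cite{LMT1996} (refined in \cite{MyOwn12, MyOwn16}). The singleton $\{\varnothing\}$ is a petite (in fact atomic) set for $X$: from $\varnothing$ the process remains in $\varnothing$ for an exponential time with rate $\lambda_0$, giving the trivial minorization $P^h(\varnothing, \cdot) \ge e^{-\lambda_0 h}\delta_\varnothing$ for $h > 0$. Combined with the multiplicative drift $\mathfrak L \tilde V(\bx) \le -\alpha \tilde V(\bx)$ for $\bx \ne \varnothing$, which follows from \eqref{eq:exact-L} and the pathwise domination (since $\tilde V$ depends on $\bx$ only through $\mathfrak n(\bx)$), the main theorem of \cite{LMT1996} delivers
$$
\norm{P^t(\bx, \cdot) - \Pi}_{\TV} \le C \tilde V(\bx) e^{-\alpha t}
$$
for a constant $C$ independent of $\bx$. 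The main obstacle is to extract the rate $\alpha$ \emph{without loss} in the coupling step, since a naive iterative coupling argument would produce only some worse rate $\alpha' < \alpha$, arising from the product of the coupling probability at $\{\varnothing\}$ and the geometric return rate. The resolution is the \emph{multiplicative} (rather than additive) form of the drift condition \eqref{eq:exact-L}, which via the Kendall--Lund--Meyn--Tweedie quantitative bounds propagates directly to the total variation rate without degradation; the Feller and total irreducibility hypotheses (Lemmata~\ref{lemma:totally-irreducible}--\ref{lemma:Feller}) are what legitimize treating $\{\varnothing\}$ as a petite set in this continuous-time, mixed-jump-diffusion setting.
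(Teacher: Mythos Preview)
Your proposal is essentially the same as the paper's: both dominate $N(t)$ by the birth--death chain with rates $\lambda^*_n,\, n\kappa^*_n$, read \eqref{eq:exact-L} as a multiplicative Lyapunov inequality yielding the exponential moment $\E[e^{\alpha T_0}]\le \hat V(n)$, couple at the atom $\{\varnothing\}$, and defer the final rate extraction to \cite{LMT1996, MyOwn12}. The only cosmetic difference is framing: the paper runs two copies $X^{(1)},X^{(2)}$ and applies Lindvall's inequality with the coupling time $\tau=\inf\{t:N^{(1)}(t)=N^{(2)}(t)=0\}$, whereas you phrase the same argument via a single copy and the regeneration/petite-set machinery; the concern you raise about preserving the exact rate $\alpha$ is precisely the step both arguments outsource to the cited references.
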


\begin{exm}
Assume $\lambda^*_n \le \lambda_* < \kappa_* \le \kappa^*_n$ for  constants $\lambda_*,\, \kappa_*$. Then we can take $\hat{V}(n) = n+1$ and $\alpha := (\kappa_* - \lambda_*)/2$, since the left-hand side of~\eqref{eq:exact-L} is less than or equal to
$$
n\lambda_* - n\kappa_* \le -2n\alpha \le -(n+1)\alpha = -\alpha\hat{V}(n),\quad n = 1, 2, \ldots.
$$
\end{exm}

\section{Large-Scale Behavior: First Setting}\label{sec:mean-field}

To analyze the distribution of bank reserves we consider the following scaling limit as the number of banks tends to infinity.  Fix an index $p \ge 2$. Consider a sequence of systems $(X^{(N)})_{N \ge 1}$ governed by the same dynamics as described in Section 2, with the same parameters $\lambda_{\cdot}(\cdot)$, $\mathcal B_{\cdot, \cdot}$, $\kappa_{\cdot}(\cdot, \cdot)$ such that $\mathfrak n(X^{(N)}(0)) = N$: the system $X^{(N)}$ starts with $N$ banks at time $t = 0$. With the empirical measure $\mu_{\cdot}$ in (\ref{eq: empiricaldist}) let us define the empirical measure process
\begin{equation} \label{eq: muNmutN}
\mu^{(N)} = \bigl(\mu^{(N)}_t,\, t \ge 0\bigr),\quad \mu_t^{(N)} := \mu_{X^{(N)}(t)}.
\end{equation}
We focus on the {\it current level} and {\it current size}
$$
\mathcal N_N(t) := \mathfrak n\left(X^{(N)}(t)\right)\quad\mbox{and}\quad \mathcal S_N(t) = \mathfrak s\bigl(X^{(N)}(t)\bigr),\ t \ge 0,
$$
of the systems,  as well as the {\it current mean reserves}:
$$
\mathbf{m}_N(t) := \frac{\mathcal S_N(t)}{\mathcal N_N(t)} = \overline{X^{(N)}(t)}.
$$

\subsection{McKean-Vlasov jump-diffusions} Now, let us describe the limiting measure-valued process which is a McKean-Vlasov jump-diffusion. This is a generalization of a McKean-Vlasov diffusion (with drift and diffusion coefficients depending not only on the current process, but on its distribution) to a jump-diffusion.

\smallskip

Consider a filtered probability space $(\Omega, \mathfrak F, (\mathfrak F_t)_{t \ge 0}, \mathbb P)$ with the filtration satisfying the usual conditions, and another measurable space $(\mathcal U, \mathfrak U)$ with a finite measure $\mathfrak n$. Fix $p > 1$. Recall that $\mathcal P_p$ is the space of probability measures on $\mathbb R$ with finite $p$th moment, which is a metric space with respect to the Wasserstein distance $\mathcal W_p$.

\smallskip

Assume $W = (W(t),\, t \ge 0)$ is an $(\mathfrak F_t)_{t \ge 0}$-Brownian motion, and $\mathbb{N} = (\mathbb{N}(t),\, t \ge 0)$ is an $(\mathfrak F_t)_{t \ge 0}$-Poisson process with intensity $\lambda$, independent of $W$. Fix drift and diffusion functions $g, \sigma : \mathbb R \times \mathcal P_p \to \mathbb R$, as well as a $\mathcal P_{p}$-valued function $\mu : \mathbb R \times\mathcal P_p \to \mathcal P_p$ for jump size distributions. Also, fix a positive number $\lambda > 0$. A process $Z = (Z(t),\, t \ge 0)$ with paths in the Skorohod space $\cD[0, \infty)$ is called a {\it McKean-Vlasov jump-diffusion} if it satisfies
\begin{equation}
\label{eq:Vlasov}
Z(t) = Z(0) + \int^{t}_{0}\left[ g(Z(s), \nu(s))\,\md s + \sigma(Z(s), \nu(s))\,\md W(s) \right]+ \sum\limits_{k=1}^{\mathbb{N}(t)}\bigtriangleup Z(\tau_k),
\end{equation}
where $0 = \tau_0 < \tau_1 < \tau_2 < \ldots$ are the jump times of the Poisson process $\mathbb{N} = (\mathbb{N}(t),\, t \ge 0)$ with intensity $\lambda$, and $\bigtriangleup Z(t) = Z(t) - Z(t-) \sim \mu_{Z(t-), \nu(t-)}$ for $t \ge 0$. Here, $\nu(t)$ is the distribution of $Z(t)$; and $\nu(t-)$ is the weak limit of $\nu(s)$ as $s \uparrow t$ (similarly to $Z(t-)$). Somewhat abusing the notation, we also call $\boldsymbol{\nu}$, which is the distribution of the process $Z$, a solution to~\eqref{eq:Vlasov}.

\smallskip

To give some explanation about the process $Z$: Between jumps it behaves as a continuous McKean-Vlasov nonlinear diffusion, with drift and diffusion coefficients dependent not only on its current state, but also on its current distribution. The jump measure corresponds to killing  $Z$ with rate $\lambda$, and restarting it according to the measure $\mu_{Z(t-), \nu(t-)}$ at every jump moment $t$.

\smallskip

We now state an existence and uniqueness result for $\mathcal W_{p}$, $p \ge 1$. Its proof is very similar to the result of \cite[Theorem 2.2]{Graham} for $\mathcal W_{1}$.
We refer the interested reader also to \cite{GrahamNew, Funaki}.

\begin{lemma} Fix $p \ge 1$. Assume $g, \sigma$ are jointly Lipschitz (with respect to $\mathcal W_p$ for their second argument), and $h$ is jointly Lipschitz with respect to the $L^p$-norm. That is, there exists a constant $C > 0$ such that for all $x_1, x_2 \in \mathbb R$ and $\zeta_1, \zeta_2 \in \mathcal P_p$, we have:
\begin{align}
\label{eq:Lip-condition}
\begin{split}
|g(x_1, \zeta_1) - g(x_2, \zeta_2)| & \le 
C\left(|x_1 - x_2| + W_p(\zeta_1, \zeta_2)\right),\\
|\sigma(x_1, \zeta_1) - \sigma(x_2, \zeta_2)| & \le 
C\left(|x_1 - x_2| + W_p(\zeta_1, \zeta_2)\right),\\
W_p\left(\mu_{x_1, \zeta_1}, \mu_{x_2, \zeta_2}\right) & \le 
C\left(|x_1 - x_2| + W_p(\zeta_1, \zeta_2)\right).
\end{split}
\end{align}
Take an initial condition $Z(0) \sim \nu(0) \in \mathcal P_p$. Then the equation~\eqref{eq:Vlasov} has a unique solution, which is an element of $\mathcal P_p(\mathcal D[0, T])$ for every $T > 0$.
\label{lemma:Vlasov}
\end{lemma}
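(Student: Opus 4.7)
The plan is to prove Lemma~\ref{lemma:Vlasov} by a Banach fixed-point argument on the space $\mathcal P_p(\mathcal D[0,T])$ of path-laws, following the template of~\cite{Graham} but adapting it to the $\mathcal W_p$ metric and the measure-dependent jump mechanism. Equip this space with the path-Wasserstein distance
$$
\tilde{\mathcal W}_{p,T}(\boldsymbol{\nu}_1,\boldsymbol{\nu}_2) := \inf \E\!\left[\sup_{t\in[0,T]}|Y_1(t)-Y_2(t)|^p\right]^{1/p},
$$
where the infimum runs over all couplings of $\mathcal D[0,T]$-valued random variables with marginals $\boldsymbol{\nu}_i$. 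For any candidate path-law $\boldsymbol{\nu}$ with time-marginals $(\nu(t))_{t\ge 0}$, define $\Phi(\boldsymbol{\nu})$ as the law of the unique strong solution $Z^{\boldsymbol{\nu}}$ of the \emph{decoupled} jump-SDE obtained from~\eqref{eq:Vlasov} by replacing the self-referential distribution argument with the prescribed $\nu(\cdot)$. Between the a.s.~finitely many jumps of $\mathbb N$ on $[0,T]$, this decoupled equation has coefficients that are Lipschitz in the state and measurable in time, so standard It\^o theory yields existence and uniqueness on each interjump interval; a BDG and Gr\"onwall estimate in $L^p$ then gives the moment bound $\E[\sup_{t\le T}|Z^{\boldsymbol{\nu}}(t)|^p]<\infty$, so $\Phi$ maps $\mathcal P_p(\mathcal D[0,T])$ into itself.

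\textbf{Contraction via synchronous coupling.} Given two candidates $\boldsymbol{\nu}_1,\boldsymbol{\nu}_2$, construct $Z^{(i)} := Z^{\boldsymbol{\nu}_i}$ on a common filtered probability space using the \emph{same} initial value $Z(0)$, the same Brownian motion $W$, and the same Poisson process $\mathbb N$; at each jump time $\tau_k$ I sample $(J_k^{(1)},J_k^{(2)})$ from the $\mathcal W_p$-optimal coupling of $\mu_{Z^{(1)}(\tau_k-),\,\nu_1(\tau_k-)}$ and $\mu_{Z^{(2)}(\tau_k-),\,\nu_2(\tau_k-)}$, realized explicitly (and measurably) by the inverse-CDF coupling on $\mathbb R$. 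Let $u(t):=\E[\sup_{s\le t}|Z^{(1)}(s)-Z^{(2)}(s)|^p]$. Splitting the difference into drift, diffusion, and jump contributions, then applying Jensen, Doob's maximal inequality, and BDG to the It\^o integral, together with the three estimates in~\eqref{eq:Lip-condition}, I expect to obtain
$$
u(t) \le C \int_0^t \left[u(s) + \mathcal W_p^p\!\left(\nu_1(s),\nu_2(s)\right)\right]\md s.
$$
Gr\"onwall then yields $u(T)\le CTe^{CT}\tilde{\mathcal W}_{p,T}^p(\boldsymbol{\nu}_1,\boldsymbol{\nu}_2)$, and taking the infimum over couplings on the left shows $\Phi$ is a strict contraction on $(\mathcal P_p(\mathcal D[0,T_0]),\tilde{\mathcal W}_{p,T_0})$ for $T_0$ sufficiently small. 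Banach's fixed-point theorem then delivers a unique solution on $[0,T_0]$; concatenating by restarting from the endpoint distribution covers any $[0,T]$ in finitely many steps.

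\textbf{Main obstacle.} The principal technical hurdle is the $L^p$ control of the jump term $\sum_{k:\tau_k\le t}(J_k^{(1)}-J_k^{(2)})$, which is not a martingale since the jump means need not coincide. I would split it as its compensator $\int_0^t \lambda\,\bigl(\overline{\mu}_{Z^{(1)}(s-),\nu_1(s-)} - \overline{\mu}_{Z^{(2)}(s-),\nu_2(s-)}\bigr)\md s$ plus a compensated Poisson martingale; the former is dominated by $|Z^{(1)}(s-)-Z^{(2)}(s-)| + \mathcal W_p(\nu_1(s-),\nu_2(s-))$ using the third estimate in~\eqref{eq:Lip-condition} together with $|\bar{\mu}_1-\bar{\mu}_2|\le \mathcal W_1 \le \mathcal W_p$, while the latter is bounded in $L^p$ by the Kunita/BDG inequality applied with predictable intensity $\lambda$ and conditional jump moments $\E[|J^{(1)}-J^{(2)}|^p\mid\mathfrak F_{s-}]=\mathcal W_p^p(\mu_{Z^{(1)}(s-),\nu_1(s-)},\mu_{Z^{(2)}(s-),\nu_2(s-)})$, again controlled by~\eqref{eq:Lip-condition}. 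A secondary subtlety is ensuring the optimal coupling at each jump depends measurably on the random pre-jump state; in one dimension the quantile coupling supplies this explicitly, avoiding any appeal to measurable selection. With these two points addressed, the fixed-point argument goes through cleanly, and the resulting unique solution lies in $\mathcal P_p(\mathcal D[0,T])$ by the already-established moment bound.
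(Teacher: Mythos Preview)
Your proposal is correct and aligns with the paper's own treatment: the paper does not give a self-contained proof of this lemma but simply states that it is ``very similar to the result of \cite[Theorem~2.2]{Graham} for $\mathcal W_1$'' and refers the reader to \cite{GrahamNew, Funaki}. Your Banach fixed-point argument on $\mathcal P_p(\mathcal D[0,T])$ via synchronous coupling of the driving noise and quantile-coupling of the jumps is exactly the adaptation of Graham's scheme to the $\mathcal W_p$ metric that the paper has in mind, so there is nothing further to compare.
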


\begin{rmk}
Note that within this framework it is possible to accommodate varying intensity of jumps, that is, $\lambda$ dependent on $Z(t)$ and $\nu(t)$. Indeed, assume that $\lambda = \lambda(Z(t), \nu(t))$ is bounded from above by a constant $\overline{\lambda}$. Instead of the measures $\mu_{z, \nu}$, we can consider measures
\begin{equation}
\label{eq:adjust}
\tilde{\mu}_{z, \nu} := \overline{\lambda}^{-1}\left[\lambda(z, \nu)\mu_{z, \nu} + (\overline{\lambda} - \lambda(z, \nu))\delta_z\right]
\end{equation}
under the assumption that the intensity of jumps is now constant and is equal to $\overline{\lambda}$. If $\lambda(z, \nu)$ is Lipschitz in $z$ and $\nu$, and the third among~\eqref{eq:Lip-condition} holds for the family $(\mu_{z,\nu})$, then the family $(\tilde{\mu}_{z, \nu})$ of measures from~\eqref{eq:adjust} also satisfy the third condition in~\eqref{eq:Lip-condition}.
\end{rmk}
Next, we can prove that the McKean-Vlasov jump-diffusion $\, Z\,$ in (\ref{eq:Vlasov}) satisfies for $p \ge 1$
$$
\lim_{s\to t} \E [|Z(s) - Z(t)|^p] \, =\,  0 \qquad \forall t \in [0, T].
$$
This implies that the mapping $t \mapsto \nu(t)$ is continuous, i.e.,~$\nu \in C([0, T], \mathcal W_p)$. We can state the McKean-Vlasov-It\^o process in an equivalent form as a martingale problem. For a function $f \in \mathcal C^2_b$,  a scalar $z \in \mathbb R$, and a probability measure $\nu \in \mathcal P_p$, define
\begin{equation}
\label{eq:generator-McKean}
\cL f(z, \nu) = g(z, \nu)f'(z) + \frac12\sigma^2(z, \nu)f''(z) + \lambda \int_{\mathbb R}\left[f(z + u) - f(z)\right]\mu_{z, \nu}(\md u).
\end{equation}
We say that a probability measure $\boldsymbol{\nu}$ in $\mathcal P_p(\cD[0, T])$ is a solution to a McKean-Vlasov jump-diffusion martingale problem, if for every function $f \in \mathcal C^2_b$ the process
\begin{equation}
\label{eq:mgle-problem}
f(Z(t)) - f(Z(0)) - \int_0^t\cL f(Z(s), \nu(s))\,\md s,\ t \in [0, T],
\end{equation}
is a martingale, where $\nu(s)$ is the projection of $\boldsymbol{\nu}$ at time $s$, and $Z \sim \boldsymbol{\nu}$ is a canonical stochastic process with trajectories in $\cD[0, T]$.  Taking expectations of this martingale, taking derivatives with respect to time, and then using that $Z(t) \sim \nu(t)$, we arrive at the following ODE
\begin{equation}
\label{eq:fund-ODE}
\frac{\md}{\md t}(\nu(t), f) = (\nu(t), \cL f(\cdot, \nu(t))) , \quad t \in [0, T].
\end{equation}
Equation~\eqref{eq:fund-ODE} characterizes the McKean-Vlasov-It\^o equation via martingale problems. The following lemma summarizes our description of McKean-Vlasov jump-diffusions.

\begin{lemma}
Under the assumptions of Lemma~\ref{lemma:Vlasov}, an adapted process $Z = (Z(t),\, t \ge 0)$ with rcll trajectories, with distribution $\nu(t) \sim Z(t)$, is a McKean-Vlasov jump-diffusion as in~\eqref{eq:Vlasov}, if and only if for every test function $f \in \mathcal C^2_b$, the process in~\eqref{eq:mgle-problem} is a martingale; or,  equivalently, if for every test function $f \in \mathcal C^2_b$, the equation~\eqref{eq:fund-ODE} holds.
\label{lemma:equiv-technical}
\end{lemma}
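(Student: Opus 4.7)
The plan is to establish the implications (i)$\Rightarrow$(ii)$\Rightarrow$(iii) directly, and then close the loop via uniqueness of the marginal flow $\nu(\cdot)$ satisfying \eqref{eq:fund-ODE}. The main tools are It\^o's formula for semimartingales with jumps and a Gronwall/coupling argument in $\mathcal{W}_p$ using the Lipschitz bounds \eqref{eq:Lip-condition}.

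For (i)$\Rightarrow$(ii), I would apply It\^o's formula to $f(Z(t))$ for $f\in\mathcal{C}^2_b$, decomposing $Z$ into its continuous semimartingale part and its pure-jump part $\sum_{k:\tau_k\le t}\bigtriangleup Z(\tau_k)$. The continuous piece contributes $\int_0^t\bigl[g(Z(s),\nu(s))f'(Z(s))+\tfrac12\sigma^2(Z(s),\nu(s))f''(Z(s))\bigr]\md s$ together with the Brownian stochastic integral $\int_0^t f'(Z(s))\sigma(Z(s),\nu(s))\,\md W(s)$, which is a genuine (not just local) martingale since $f'$ is bounded for $f\in\mathcal{C}^2_b$ while $\sigma(Z,\nu)$ has finite $p$-th moments by \eqref{eq:Lip-condition} and Lemma~\ref{lemma:Vlasov}. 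For the jump part, $\mathbb{N}$ is an $(\mathfrak{F}_t)$-Poisson process of intensity $\lambda$ independent of $W$, and conditionally on $\mathfrak{F}_{\tau_k-}$ the jump $\bigtriangleup Z(\tau_k)$ has law $\mu_{Z(\tau_k-),\nu(\tau_k-)}$; hence the predictable compensator of $\sum_{\tau_k\le t}[f(Z(\tau_k))-f(Z(\tau_k-))]$ is $\int_0^t\lambda\int[f(Z(s-)+u)-f(Z(s-))]\mu_{Z(s-),\nu(s-)}(\md u)\,\md s$. Since $Z$ has countably many jumps and $\nu$ is continuous (as noted just before the statement), $(Z(s-),\nu(s-))=(Z(s),\nu(s))$ $\md s$-almost everywhere, so subtracting the compensator leaves $f(Z(t))-f(Z(0))-\int_0^t\mathcal{L}f(Z(s),\nu(s))\,\md s$ equal to a martingale, i.e., precisely \eqref{eq:mgle-problem}.

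For (ii)$\Rightarrow$(iii), I take expectations in \eqref{eq:mgle-problem} (which vanishes at $t=0$) to obtain $(\nu(t),f)-(\nu(0),f)=\int_0^t(\nu(s),\mathcal{L}f(\cdot,\nu(s)))\,\md s$. The integrand is continuous in $s$ by the continuity of $s\mapsto\nu(s)$ in $\mathcal{W}_p$, combined with the Lipschitz bounds \eqref{eq:Lip-condition} and boundedness of $f,f',f''$; differentiation in $t$ then yields \eqref{eq:fund-ODE}.

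The main obstacle lies in (iii)$\Rightarrow$(i), since \eqref{eq:fund-ODE} constrains only the one-dimensional marginals of $Z$. My plan is first to show that the ODE \eqref{eq:fund-ODE}, tested against all $f\in\mathcal{C}^2_b$, admits at most one continuous $\mathcal{W}_p$-flow with a given initial datum $\nu(0)$: given two candidates $\nu^1,\nu^2$, a coupling argument driven by \eqref{eq:Lip-condition} delivers $\mathcal{W}_p(\nu^1(t),\nu^2(t))^p\le C\int_0^t\mathcal{W}_p(\nu^1(s),\nu^2(s))^p\,\md s$, and Gronwall forces $\nu^1\equiv\nu^2$. Hence the marginals of $Z$ must agree with those of the unique SDE solution $Z^\ast$ furnished by Lemma~\ref{lemma:Vlasov}. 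Freezing this now-determined flow $\nu$ reduces \eqref{eq:Vlasov} to a standard (non--McKean--Vlasov) jump-SDE with deterministic time-dependent coefficients; for such an equation, the equivalence between the SDE formulation, the martingale problem and the Kolmogorov forward equation is classical (see, e.g., Ethier--Kurtz), so $Z$ is identified in law with $Z^\ast$. The delicate technical check is that the frozen-coefficient problem inherits enough regularity from \eqref{eq:Lip-condition} for classical well-posedness to apply, and that the underlying filtration supports the Brownian motion and Poisson process required by \eqref{eq:Vlasov}.
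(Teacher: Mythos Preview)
The paper does not actually prove this lemma: it states that the proof ``follows standard arguments (see for example \cite[Section 5.4]{KSBook} or \cite[Section 4.4]{EthierKurtz}) and is therefore omitted.'' Your sketch is precisely an instantiation of those standard arguments, so there is nothing to compare at the level of strategy.

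Your implications (i)$\Rightarrow$(ii)$\Rightarrow$(iii) are correct and routine. For the converse direction you have correctly identified the only real subtlety: \eqref{eq:fund-ODE} constrains merely the marginal flow $t\mapsto\nu(t)$, not the path law of $Z$, so some form of uniqueness must be invoked to close the loop. Your plan---prove uniqueness of the $\mathcal{W}_p$-flow solving \eqref{eq:fund-ODE} via a Gronwall/coupling bound from \eqref{eq:Lip-condition}, freeze that flow to obtain a classical time-inhomogeneous jump-diffusion, then appeal to the standard equivalence of SDE, martingale problem, and forward equation---is exactly how the references the paper cites would have you proceed. One caveat worth making explicit: once the flow $\nu(\cdot)$ is frozen, the passage from the forward equation back to the martingale problem identifies $Z$ only \emph{in law}, not pathwise; this is consistent with the paper's own convention (stated just before \eqref{eq:generator-McKean}) of calling the law $\boldsymbol{\nu}$ itself a ``solution,'' and with how the lemma is actually used downstream in the proof of Theorem~\ref{thm:hydro}, where only the law of the limit point is being characterized.
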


The proof of Lemma~\ref{lemma:equiv-technical} follows standard arguments (see for example \cite[Section 5.4]{KSBook} or \cite[Section 4.4]{EthierKurtz}) and is therefore omitted.

\begin{rmk} \label{rmk:history}
In Section~\ref{sec:mf-N} we shall need a version of \eqref{eq:Vlasov} with parameters $g,\, \sigma,\, \mu$, depending not only on $Z(t)$ and $\nu(t)$, but on the whole history $Z(s)\,, \nu(s)\,, 0 \le s \le t$, as well as on time $t$. Thus, this McKean-Vlasov jump-diffusion is {\it path-dependent} and {\it time-inhomogeneous}. Similar to~\eqref{eq:Lip-condition} we then modify the Lipschitz conditions as follows: For $t \in [0, T]$, $z_1, z_2 \in D[0, t]$, and $\boldsymbol{\nu}_1, \boldsymbol{\nu}_2 \in C([0, t], \mathcal P_p)$, with $\nu_1(s)$, $\nu_2(s)$ being push-forwards of $\boldsymbol{\nu}_1$, $\boldsymbol{\nu}_2$ with respect to the projection mapping $x \mapsto x(s)$ for each $0 \le s \le t$, we define the distance function:
$$
\Delta(t) := \sup\limits_{0 \le s \le t}|z_1(s) - z_2(s)| + \sup\limits_{0 \le s \le t}\mathcal W_p(\nu_1(s), \nu_2(s)),
$$
and impose the following Lipschitz conditions: 
\begin{align}
\label{eq:new-lip}
\begin{split}
\left|g(t, z_1, \boldsymbol{ \nu}_1) - g(t, z_2, \boldsymbol{ \nu}_2)\right| & 
\le C\cdot\Delta(t),\\
\left|\sigma(t, z_1, \boldsymbol{\nu}_1) - \sigma(t, z_2, \boldsymbol{\nu}_2)\right| & \le C\cdot\Delta(t),\\
\mathcal W_p\left(\mu_{t, z_1, \boldsymbol{\nu}_1}, \mu_{t, z_2, \boldsymbol{\nu}_2}\right) & \le C\cdot\Delta(t).
\end{split}
\end{align}
Then Lemma~\ref{lemma:Vlasov} and Lemma~\ref{lemma:equiv-technical} still hold, with the formula for the generator~\eqref{eq:generator-McKean}.
\end{rmk}

\subsection{Mean-field limit: first main result} We investigate the limiting behavior of the empirical measure process $(\mu^{(N)}_t,\, t \ge 0)$ as $N \to \infty$. We shall show that these measure-valued processes converge, in fact, to a deterministic measure-valued process, governed by a certain McKean-Vlasov equation. To this end, we impose some additional assumptions on the parameters of our model as the number of banks $n$ tends to infinity. Note that in our scaling, we re-parametrize in terms of $n$ and $y= s/n$ (i.e.,~$s = n y$) in reference to the mean size $\mathbf{m}$ above.

\begin{asmp} As $n \to \infty$, $\mathcal B_{n, ny} \to \mathcal B_{\infty, y}$ in the Wasserstein distance $\mathcal W_p$, uniformly over $y > 0$, with the family $(\mathcal B_{\infty, y})_{y > 0}$ continuous in $\mathcal W_p$; and the measures $\mathcal B_{n, s},\, n \ge 0,\, s> 0$ have uniformly bounded $p$-th moments.
\label{asmp:birth-measures}
\end{asmp}

\begin{asmp}  As $n \to \infty$, we assume uniform convergence to a continuous limit $\lambda_{\infty}$:
$$
\frac{\lambda_n(ny)}{n} \to \lambda_{\infty}(y),\, y > 0
$$
uniformly in $y >0$. Moreover, there exists a constant $C_{\lambda}$ such that $\lambda_n(s) \le C_{\lambda}(n + s)$ for all $n,s$.
\label{asmp:birth-intensities}
\end{asmp}

Examples of birth rates satisfying Assumption ~\ref{asmp:birth-intensities} are $\lambda_{n}(s) = \bar{\lambda}s$ (new banks formed at rate proportional to total reserves)  and $\lambda_n(s) = \bar{\lambda}n$ (new banks formed at rate proportional to current number) for a constant $\bar{\lambda} > 0$,  which both lead to $\lambda_{\infty}(y) = \bar{\lambda}y$. Note that birth rates must increase as system size $N$ grows to avoid the trivial limit $\lambda_\infty = 0$. 

\begin{asmp}
\label{asmp:default-measures}
If $\xi_{n, s, x} \sim \cD_{n, s,x}$, then $n\xi_{n, ny, x} \to \xi_{\infty, y,x} \sim \cD_{\infty, y, x}$ as $n \to \infty$ in the Wasserstein distance $\mathcal W_p$ uniformly over all $x, y > 0$, where the family of measures $(\cD_{\infty, y,x})_{x, y > 0}$ is continuous in $\mathcal W_p$ jointly in $x$ and $y$; support of measures $\cD_{\infty, y, x}$ is bounded from above uniformly in $x$ and $y$; and $\xi_{\infty, y, x}$ has uniformly bounded $p$th moment over all $x, y$. 
\end{asmp}

\begin{rmk}
From~\eqref{asmp:default-measures} it follows that for $q \le p$,
\begin{equation}
\label{eq:D-const}
C_{\cD, q} := \sup\limits_{N, x, s}\bigl[N^q\int_0^{1}z^q\,\cD_{N, x, s}(\mathrm{d}z)\bigr] < \infty,
\end{equation}
and there exist $n_0$ and $\varepsilon_0 \in (0, 1)$ such that a.s. for all $n \ge n_0$, $0 < x < ny$, $|\xi_{n, y, x}| \le 1 - \varepsilon_0$.
\label{rmk:technical-default-measures}
\end{rmk}

The requirement in Assumption~\ref{asmp:default-measures} is that the default impact decreases inversely proportional to the scaling parameter. Larger banking systems will experience more defaults (namely proportionally to $N$, see the next assumption), so the impact of each default must shrink to compensate. Note that the limiting distribution $\cD_{\infty,y,x}$ does not matter and only its mean will appear in the limit equation. An example would be $\xi_{n,s,x} \sim \Uni( 0, \bar{d}/n)$ so that $\xi_{\infty, y,x} \sim \Uni(0, \bar{d})$. The next assumption is about the convergence of the default rates. Another example would be the case when default rates are independent of $n,s$: $\kappa_\infty(x) = \kappa_n(x)$.

\begin{asmp}
\label{asmp:default-intensities}
As $n \to \infty$, uniformly over $y, x \in (0, \infty)$, we have: $\kappa_n(ny,x) \to \kappa_{\infty}(y,x)$, with $\kappa_{\infty}(y,x)$ continuous in $y$. Moreover, there exists a constant $C_{\kappa}$ independent of $x$, $n$ and $s$ such that $\kappa_n(s,x) \le C_{\kappa}$ for all $n,s,x$.
\end{asmp}

Denote the means of the limiting measures $\mathcal B_{\infty, y}$ and $\cD_{\infty, y, x}$ by $\overline{\mathcal B}_{\infty}(y)$ and $\overline{\cD}_{\infty}(y,x)$. Define
\begin{equation}
\label{eq:expression-for-psi}
\psi(x,y) := r - \overline{\cD}_{\infty}(y, x)\kappa_{\infty}(y,x),\qquad y > 0,
\end{equation}
\begin{equation}
\label{eq:G-adjusted}
\cG_y f(x) := \psi(x, y)D_1f(x) + \frac{\sigma^2}2D_2f(x) = \cG f(x) - \overline{\cD}_{\infty}(x, y)\kappa_{\infty}(x, y)D_1f,
\end{equation}
where $\cG$ is from \eqref{eq:operators}. This will be the limiting diffusion term, corresponding to the original geometric Brownian motion dynamics, summarized by $\cG$ plus the additional mean-field-based drift term due to the default interactions. Define the measure-valued process $\mu^{(\infty)} = (\mu^{(\infty)}_t,\, t \ge 0)$ as the law of a McKean-Vlasov jump-diffusion with generator
\begin{align}
\label{eq:main-generator}
\begin{split}
\mathcal L_{\nu}f(z) & := \cG_{\overline{\nu}}f + \lambda_{\infty}\left(\overline{\nu}\right)\int_0^{\infty}\left[f(w) - f(z)\right]\,\mathcal B_{\infty, \overline{\nu}}(\mathrm{d}w) \\ & +  \kappa_{\infty}(\overline{\nu}, z)\int_0^{\infty}\left[f(w) - f(z)\right]\,\nu(\mathrm{d}w).
\end{split}
\end{align}
We can apply current distribution $\nu$ to this generator and get:
\begin{align}
\label{eq:main-operator}
\begin{split}
\mathcal A(\nu, f) := (\nu, \mathcal L_{\nu}f) & =  (\nu, \cG_{\overline{\nu}}f)  + \lambda_{\infty}\left(\overline{\nu}\right) \big[ \left(\mathcal B_{\infty, \overline{\nu}}, f\right)  - \left(\nu, f\right) \big] \\  &   + (\nu, f) \left(\nu, \kappa_{\infty}\left(\overline{\nu}, \cdot\right) \right) -  \left(\nu, \kappa_{\infty}\left(\overline{\nu}, \cdot\right)f\right).
\end{split}
\end{align}
The main result below is that $\mu^{(\infty)}$ is a suitable limit of $\mu^{(N)}$'s from (\ref{eq: muNmutN}). To explain the form of $\mathcal{A}$ we discuss each term. First, the $\cG_{\overline{\nu}}$ term arises from the additional average downward drift from the defaults. Next, there are two different jump mechanisms:  The second piece
\begin{equation}
\label{eq:birth-term}
\lambda_{\infty}\left(\overline{\nu}\right)\left(\mathcal B_{\infty, \overline{\nu}}, f\right) - \lambda_{\infty}\left(\overline{\nu}\right)\left(\nu, f\right)
\end{equation}
arises from births from the pre-limit finite system which translate into killing and restarting according to the measure $\mathcal B_{\infty, \cdot}$. This can be viewed as exogenous ``regeneration'' with a source measure $\mathcal B_{\infty,\overline{\nu}}$.
 The third piece
\begin{equation}
\label{eq:default}
(\nu, f) \left(\nu, \kappa_{\infty}\left(\overline{\nu}, \cdot\right)\right) - \left(\nu, \kappa_{\infty}\left(\overline{\nu}, \cdot\right)f\right)
\end{equation}
is an endogenous push due to the non-constant default intensity. Regions where $\kappa_\infty$ is higher experience higher rates of defaults, whereby the respective banks ``dis-appear''; in the limit they immediately ``re-appear'' according to $\nu$. This can be thought of as a genetic mutation: particles in high-default regions get killed and replaced with new particles sampled according to $\nu$.

\smallskip

If $\kappa_{\infty}(y, x) = \kappa_{\infty}(y)$ depends only on $y$, then the term~\eqref{eq:default} vanishes. Indeed, this term then becomes proportional to the action $(\nu, f)$ of the current distribution $\nu$ on the test function $f$. This means we kill the process and restart it at the same distribution, which is equivalent to doing nothing. Thus, only the decrease of reserves of  all remaining banks by i.i.d.~fractions influences the empirical measure, turning the drift from $r$ into $\psi$ from~\eqref{eq:expression-for-psi}.

\smallskip

Financially, we see that defaults from the pre-limit finite system translate into two effects. On the one hand, defaults themselves create additional downward drift inside $\psi$ from~\eqref{eq:expression-for-psi}, as compared with the original drift $r$. On the other hand, financial contagion after a default creates reset times when the process is killed and restarted, which corresponds to the term~\eqref{eq:default}. Let us mention how bankruptcies occur in this limit: The fraction of banks defaulting at time $[t, t + \Delta t]$  is $\kappa_{\infty}(\mathbf{m}(t))\,\mathrm{d}t$. That is, the fraction of banks defaulted during time interval $[s, t]$ is $\int_s^t\kappa_{\infty}(\mathbf{m}(u))\,\mathrm{d}u$.  This fraction can be greater than $1$, because new banks emerge all the time. 

\smallskip
In the notation of \eqref{eq:Vlasov}, we interpret the mean field limit as a McKean-Vlasov jump-diffusion $Z$ which has drift $\psi(Z(t), \nu_t)Z(t)$, diffusion $\sigma Z(t)$ and the following family of jump measures:
\begin{equation}
\label{eq:jump-measures}
\mu_{z, \nu}(\mathrm{d}w) = \lambda_{\infty}\left(\overline{\nu}\right)\,\mathcal B_{\infty, \overline{\nu}}(\mathrm{d}w) +  \kappa_{\infty}(\overline{\nu}, z)\,\nu(\mathrm{d}w).
\end{equation}
The process $Z$ can be thought of as a `representative particle'. 

\begin{lemma}
Under Assumptions~~\ref{asmp:birth-measures},~\ref{asmp:birth-intensities},~\ref{asmp:default-measures},~\ref{asmp:default-intensities}, there exists a unique McKean-Vlasov jump-diffusion $Z$ with generator~\eqref{eq:main-operator}.
\end{lemma}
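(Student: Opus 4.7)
The strategy is to cast the limiting object into the framework of Lemma~\ref{lemma:Vlasov} (with the variable-intensity reparametrization of Remark after \eqref{eq:adjust}), and verify the Lipschitz conditions~\eqref{eq:Lip-condition}. Reading the generator~\eqref{eq:main-generator} against~\eqref{eq:Vlasov} and~\eqref{eq:generator-McKean}, the candidate process $Z$ carries drift $g(z,\nu) = \psi(z,\overline{\nu})\,z$, diffusion $\sigma(z,\nu) = \sigma z$, total jump intensity $\Lambda(z,\nu) = \lambda_{\infty}(\overline{\nu}) + \kappa_{\infty}(\overline{\nu},z)$, and post-jump location measure
\[
\mu_{z,\nu}(\md w) \;=\; \frac{\lambda_{\infty}(\overline{\nu})\,\mathcal B_{\infty,\overline{\nu}}(\md w) \;+\; \kappa_{\infty}(\overline{\nu},z)\,\nu(\md w)}{\lambda_{\infty}(\overline{\nu}) + \kappa_{\infty}(\overline{\nu},z)}.
\]

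\textbf{Reduction to bounded coefficients.} Since $g$ and $\sigma$ grow linearly in $z$, the global Lipschitz hypothesis of Lemma~\ref{lemma:Vlasov} is not directly satisfied. I would perform the logarithmic change of variable $Y := \log Z$; by It\^o's formula the $Y$-SDE has drift $\psi(e^{Y},\overline{\nu}) - \sigma^{2}/2$ and constant diffusion $\sigma$. By Assumption~\ref{asmp:default-intensities} and Remark~\ref{rmk:technical-default-measures} one has $|\psi| \le r + C_{\kappa}$, so the transformed drift is bounded and Lipschitz in $y = \log z$. Positivity of the supports of $\mathcal B_{\infty,\cdot}$ and of the initial condition keeps $Z$ strictly positive, so the transform is well defined, and jumps of $Y$ are obtained by push-forward of $\mu_{e^{y},\nu}$ under $\log$.

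\textbf{Lipschitz bounds and conclusion.} To verify~\eqref{eq:Lip-condition} for the $Y$-system, first note that the scalar maps $\overline{\nu} \mapsto \lambda_\infty(\overline{\nu})$, $(\overline{\nu},z) \mapsto \kappa_\infty(\overline{\nu},z)$ and $(\overline{\nu},z) \mapsto \overline{\cD}_\infty(\overline{\nu},z)$ are continuous and bounded by Assumptions~\ref{asmp:default-measures} and~\ref{asmp:default-intensities}; upgrading these to local Lipschitz regularity (a mild strengthening of continuity, standard for McKean--Vlasov well-posedness) and composing with the $1$-Lipschitz map $\nu \mapsto \overline{\nu}$ (using $|\overline{\nu}_1 - \overline{\nu}_2| \le \mathcal W_1(\nu_1,\nu_2) \le \mathcal W_p(\nu_1,\nu_2)$) yields the first two bounds of~\eqref{eq:Lip-condition}. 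For the third, write $\mu_{z,\nu} = \alpha(z,\nu)\,\mathcal B_{\infty,\overline{\nu}} + (1-\alpha(z,\nu))\,\nu$ and exploit the joint Lipschitz dependence of $\mathcal W_p$ on mixing weights and mixing measures, together with Assumption~\ref{asmp:birth-measures} controlling $\overline{\nu} \mapsto \mathcal B_{\infty,\overline{\nu}}$ in $\mathcal W_p$. Invoking Lemma~\ref{lemma:Vlasov} on the $Y$-system then produces a unique element of $\mathcal P_p(\cD[0,T])$ for every $T > 0$; exponentiating recovers $Z$, and pathwise uniqueness transfers via the bijection $Y = \log Z$.

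\textbf{Main obstacle.} The principal difficulty is handling the multiplicative GBM-type growth of $g$ and $\sigma$, which formally escapes the global-Lipschitz scope of Lemma~\ref{lemma:Vlasov}; the log transform resolves this cleanly because the contagion-adjusted drift $\psi$ is uniformly bounded. A secondary technicality is the linear-in-$\overline{\nu}$ growth of $\Lambda$ (from Assumption~\ref{asmp:birth-intensities}), which means the constant-intensity normalization~\eqref{eq:adjust} must be carried out on bounded-moment subsets of $\mathcal P_p(\cD[0,T])$; this is standard provided one propagates uniform $p$-th moment bounds along the Picard iteration, so it does not obstruct the fixed-point argument underlying Lemma~\ref{lemma:Vlasov}.
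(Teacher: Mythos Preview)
Your identification of the linear growth of $g(z,\nu)=\psi(z,\overline\nu)z$ and $\sigma(z,\nu)=\sigma z$ is a valid concern that the paper simply does not raise: the paper's proof is a two-line verification that invokes Lemma~\ref{lemma:Vlasov} directly, asserting the Lipschitz property of $x\mapsto\psi(x,y)x$, of $x\mapsto\sigma x$, and of the jump measures~\eqref{eq:jump-measures} in $\mathcal W_p$, and nothing more. In that sense you are being more careful than the authors.

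However, the logarithmic change of variable does not resolve the difficulty cleanly, and in fact creates a new one at the jump component. The jumps of $Z$ are not multiplicative shocks but \emph{resets}: at a jump time the process is killed and restarted at a point $w$ drawn from $\mathcal B_{\infty,\overline\nu}$ or from $\nu$. Under $Y=\log Z$ the post-jump law becomes the push-forward of these measures by $\log$. Nothing in Assumptions~\ref{asmp:birth-measures}--\ref{asmp:default-intensities} bounds the supports of $\mathcal B_{\infty,\cdot}$ or $\nu$ away from $0$, so $\log w$ need not have a finite $p$-th (or even first) moment, and the map $\nu\mapsto(\log)_\#\nu$ is not Lipschitz in $\mathcal W_p$ because $\log$ is not Lipschitz near the origin. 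Thus the third condition in~\eqref{eq:Lip-condition} may fail for the transformed system even when it holds for the original one, so the detour through $Y$ does not land you back inside the hypotheses of Lemma~\ref{lemma:Vlasov}. A second, smaller issue: you explicitly ``upgrade'' continuity of $\lambda_\infty$, $\kappa_\infty$, $\overline{\cD}_\infty$ to Lipschitz regularity; the paper tacitly does the same, but it is worth recognizing this is an added hypothesis rather than a consequence of the stated assumptions.

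The cleaner route, if you want to make the paper's argument honest, is to keep the $Z$-coordinates and either (i) appeal to a version of Lemma~\ref{lemma:Vlasov} that allows coefficients of linear growth (this is what the Graham references behind the lemma actually provide), or (ii) run the Picard iteration with a priori $p$-th moment bounds so that the relevant Lipschitz estimates only need to hold on moment-bounded subsets of $\mathcal P_p$, which is the standard mechanism and exactly the device you already invoke for the unbounded intensity $\Lambda$.
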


\begin{proof} We verify the conditions of Lemma~\ref{lemma:Vlasov}. Both terms in~\eqref{eq:jump-measures}:
$$
\lambda_{\infty}\left(\overline{\nu}\right)\mathcal B_{\infty, \overline{\nu}}(\mathrm{d}w)\quad \mbox{and}\ \quad  \kappa_{\infty}(z, \overline{\nu})\,\nu(\mathrm{d}w)
$$
have the $p$th moment uniformly bounded for $p \ge 2$. Note the Lipschitz property of these measures with respect to the measure $\nu$ in $\mathcal W_p$. Together with the Lipschitz property of the functions $x \mapsto \psi(x, y)x$ and $x \mapsto \sigma x$, this completes the proof.
\end{proof}

The following is the main result of this section, with proof postponed to Section \ref{sec:theorem41}.

\begin{thm} Suppose the initial empirical measures converge in $\mathcal W_p$ with $p > 1$:
$$
\mu^{(N)}_0 \Rightarrow \mu^{(\infty)}_0\ \mbox{as}\ N \to \infty.
$$
Under Assumptions~\ref{asmp:birth-measures},~\ref{asmp:birth-intensities},~\ref{asmp:default-measures},~\ref{asmp:default-intensities}, we have the following convergence in law in the Skorohod space $\cD([0, T], \mathcal P_{q})$, for every $T > 0$ and $q \in (1, p)$:
$$
\mu^{(N)} \Rightarrow \mu^{(\infty)}\ \mbox{as}\ N \to \infty.
$$
\label{thm:hydro}
\end{thm}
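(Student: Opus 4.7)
The plan is the classical martingale-problem route following Graham \cite{Graham, GrahamNew}. I would (i) decompose $(\mu^{(N)}_t, f)$ by Dynkin's formula; (ii) establish tightness of $\{\mu^{(N)}\}$ in $\cD([0, T], \mathcal P_q)$; (iii) identify any subsequential limit as a solution of the martingale problem~\eqref{eq:fund-ODE} for the generator $\cL_{\nu}$ of~\eqref{eq:generator-McKean}; and (iv) invoke uniqueness from Lemma~\ref{lemma:Vlasov} and Lemma~\ref{lemma:equiv-technical} to upgrade subsequential convergence to full convergence.

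Fix $f \in \mathcal C^2_b$ and set $F_f(\bx) := (\mu_{\bx}, f) = \cE_f(\bx)$. Dynkin's formula under the pre-limit generator~\eqref{eq:generator} yields
\begin{equation*}
(\mu^{(N)}_t, f) \,=\, (\mu^{(N)}_0, f) \,+\, \int_0^t (\mathfrak{L}F_f)(X^{(N)}(s))\,\md s \,+\, M^{N,f}_t,
\end{equation*}
with $M^{N,f}$ a square-integrable rcll martingale. A first-order Taylor expansion $f(x_j(1-z_{ji})) = f(x_j) - z_{ji}D_1 f(x_j) + O(z_{ji}^2)$, together with the uniform convergences $\lambda_n(ny)/n \to \lambda_{\infty}(y)$, $N\overline{\cD}_N \to \overline{\cD}_{\infty}$, $\kappa_n(ny, x) \to \kappa_{\infty}(y, x)$ and $\mathcal B_{n, ny} \Rightarrow \mathcal B_{\infty, y}$ from Assumptions~\ref{asmp:birth-measures}--\ref{asmp:default-intensities}, produces the pointwise identity
\begin{equation*}
(\mathfrak L F_f)(X^{(N)}(s)) \,=\, \mathcal A(\mu^{(N)}_s, f) \,+\, R^{N}_{s,f}, \qquad |R^N_{s,f}| \le \tfrac{C(f)}{N}\bigl(1 + (\mu^{(N)}_s, f_p)\bigr).
\end{equation*}
A direct computation of the predictable quadratic variation moreover gives $\langle M^{N,f}\rangle_T = O(1/N)$, since each individual jump of $(\mu^{(N)}, f)$ is of size $O(1/N)$ while jumps accumulate at rate $O(N)$, and the diffusive martingale inherits a $1/N$ factor from the empirical normalisation.

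For tightness I would first establish the uniform moment control
\begin{equation*}
\sup_N \E \Bigl[\sup_{t \le T} (\mu^{(N)}_t, f_p) \,+\, \sup_{t \le T} (\mathcal N_N(t)/N)^p \Bigr] \,<\, \infty,
\end{equation*}
by feeding the Dynkin identity with $f_p$ and the constant test function $f \equiv 1$. The key ingredients are the closure relation $\cG f_p = (\sigma^2 p(p-1)/2 + pr) f_p$ from~\eqref{eq:acting-simply}, the sub-linear birth bound $\lambda_n(s) \le C_{\lambda}(n+s)$, uniform finiteness of the $p$-th moment of $\mathcal B_{n,s}$, and the uniform bound $\kappa_n \le C_\kappa$; a Gronwall argument then closes the estimate, very much in the spirit of Theorem~\ref{thm:conservative}. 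Given these moments, Aldous--Rebolledo's criterion applied to the real-valued processes $t \mapsto (\mu^{(N)}_t, f)$ for $f \in \mathcal C^2_b$ produces tightness in $\cD([0, T], \mathbb R)$, and Jakubowski's criterion lifts this to tightness of $\{\mu^{(N)}\}$ in $\cD([0, T], \mathcal P_q)$; the moment control is used to upgrade weak-$*$ topology to the $\mathcal W_q$-topology for $q < p$.

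Let $\mu^{(\infty)}$ be any subsequential limit. Passing to the limit in the Dynkin identity, using $\E|M^{N,f}_t|^2 = O(1/N)$, the remainder bound together with the moment control to kill $R^N_{s,f}$ in $L^1$, and the $\mathcal W_q$-continuity of $\nu \mapsto \mathcal A(\nu, f)$ (which follows from the continuity in $y$ of $\lambda_{\infty}$, $\kappa_{\infty}$, $\overline{\mathcal B}_{\infty}$, $\overline{\cD}_{\infty}$ plus uniform boundedness), one obtains the deterministic ODE $(\nu_t, f) = (\nu_0, f) + \int_0^t \mathcal A(\nu_s, f)\,\md s$ for every $f \in \mathcal C^2_b$. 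By Lemma~\ref{lemma:equiv-technical} this characterises $\mu^{(\infty)}$ as a solution of the McKean--Vlasov martingale problem, and Lemma~\ref{lemma:Vlasov} provides uniqueness, so the full sequence converges.

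The main obstacle is passing to the limit in the default term: each $\xi_{ji}$ is $O(1/N)$, yet on the order of $N$ defaults occur per unit time, so that both the endogenous drift contained in $\cG_{\overline{\nu}}$ and the endogenous jump term~\eqref{eq:default} are $O(1)$ contributions distilled from seemingly negligible $O(1/N)$ ingredients. Controlling this needs the \emph{uniform} (not merely pointwise) Wasserstein convergence $N\xi_{n, ny, x} \to \xi_{\infty, y, x}$ of Assumption~\ref{asmp:default-measures} together with the uniform upper support bound in Remark~\ref{rmk:technical-default-measures} to prevent second-order Taylor terms from surviving in the $N \to \infty$ limit. A subsidiary but essential point is to keep $\mathcal N_N(t)$ of order $N$ throughout $[0, T]$; here the uniform bound $\kappa_n \le C_\kappa$ and sub-linear $\lambda_n$ in Assumption~\ref{asmp:birth-intensities} control both the upward and the downward fluctuations of the count process and prevent the normaliser $\mathcal N_N(t)$ in the definition of $\mu^{(N)}_t$ from degenerating.
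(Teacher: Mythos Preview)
Your proposal is correct and follows the same martingale-problem route as the paper: Dynkin/It\^o decomposition of $(\mu^{(N)}_t,f)$, generator convergence $\mathfrak L\mathcal E_f\to\mathcal A(\nu,f)$ via the Taylor expansion of the default term, vanishing martingale, tightness in $\cD([0,T],\mathcal P_q)$, identification of the limit as the McKean--Vlasov equation, and uniqueness from Lemma~\ref{lemma:Vlasov}. Your diagnosis of the two delicate points---the $O(1)$ contribution distilled from $O(1/N)$ default impacts and the need to keep $\mathcal N_N(t)$ of order $N$---matches the paper exactly.

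The one place where your implementation differs is the moment bound $\sup_N\E\bigl[\sup_{t\le T}(\mu^{(N)}_t,f_p)\bigr]<\infty$. You propose to feed $f_p$ into the Dynkin identity and close by Gronwall; the paper instead argues by pathwise stochastic domination, sandwiching $\mathcal N_N(\cdot)$ between a $\mathrm{Bin}(N,e^{-C_\kappa T})$ variable and a pure-birth process, and dominating each $X_i^{(N)}$ by an unconstrained geometric Brownian motion, then applying Wald's identity. Your route is more standard but needs two pieces of care: $f_p\notin\mathcal C^2_b$, so the Dynkin identity must be localised or truncated before Gronwall is applied; and Gronwall naturally delivers $\sup_t\E[\,\cdot\,]$ rather than $\E[\sup_t\,\cdot\,]$, so an additional BDG step on the martingale part is required to pull the supremum inside. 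The paper's domination argument sidesteps both issues at the cost of being more ad hoc.
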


By Lemma \ref{lemma:technical-convergence}, the functional $\nu \mapsto \overline{\nu}$ (taking the mean) is continuous in $\mathcal P_q$. Thus we have:

\begin{cor} As $N \to \infty$, we have weak convergence in $\cD[0, T]$:
$$
\mathbf{m}_N(\cdot) \Rightarrow \mathbf{m}(\cdot) \quad \mbox{where}\quad \mathbf{m}(t) := \E[ Z(t)] = (\mu^{(\infty)}_{t},f_{1}),
$$
\end{cor}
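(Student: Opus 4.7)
The plan is to deduce the corollary from Theorem~\ref{thm:hydro} via the continuous mapping theorem. Note that $\mathbf{m}_N(t) = \overline{X^{(N)}(t)} = (\mu_t^{(N)}, f_1) = \overline{\mu_t^{(N)}}$, so $\mathbf{m}_N = \Phi(\mu^{(N)})$ where $\Phi : \cD([0,T], \mathcal{P}_q) \to \cD([0,T], \mathbb{R})$ is the pointwise pushforward by the mean functional $\nu \mapsto \overline{\nu} = (\nu, f_1)$. If $\Phi$ is continuous (for $q > 1$), then Theorem~\ref{thm:hydro} combined with the continuous mapping theorem yields the desired convergence in $\cD[0,T]$, and the identification $\mathbf{m}(t) = (\mu^{(\infty)}_t, f_1) = \mathbb{E}[Z(t)]$ is immediate since $\mu^{(\infty)}_t$ is by construction the law of $Z(t)$.

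The first step is to invoke Lemma~\ref{lemma:technical-convergence} (cited in the corollary's preamble), which states that $\nu \mapsto \overline{\nu}$ is continuous on $\mathcal{P}_q$ for $q > 1$. The point is that $\mathcal{W}_q$-convergence is equivalent to weak convergence together with convergence of the $q$-th moments, so in particular the $1$-st moments converge because $f_1(x) = x$ is uniformly integrable with respect to any $\mathcal{W}_q$-convergent sequence (since $q > 1$ provides enough integrability for the uniform tail control). This is precisely why we required $q \in (1, p)$ in the statement of Theorem~\ref{thm:hydro}: at $q = 1$, $\mathcal{W}_1$-convergence trivially implies convergence of the mean, but the convergence in Theorem~\ref{thm:hydro} is stated in $\mathcal{W}_q$ for $q > 1$, and we have slack because the empirical measures have uniformly bounded $p$-th moments with $p > q$.

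Next I would lift the pointwise continuity of $\varphi(\nu) := \overline{\nu}$ to continuity of the induced map $\Phi$ on Skorokhod spaces. If $\mu_n \to \mu$ in $\cD([0,T], \mathcal{P}_q)$, by definition there exist reparametrizations $\lambda_n$ of $[0,T]$ with $\lambda_n \to \mathrm{id}$ uniformly and $\sup_{t} \mathcal{W}_q(\mu_n(\lambda_n(t)), \mu(t)) \to 0$. Applying $\varphi$, the same reparametrizations give $\sup_t |\varphi(\mu_n(\lambda_n(t))) - \varphi(\mu(t))| \to 0$ by the continuity of $\varphi$, so $\Phi(\mu_n) \to \Phi(\mu)$ in the uniform (hence Skorokhod) topology of $\cD[0,T]$. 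In particular $\Phi$ is continuous everywhere on $\cD([0,T], \mathcal{P}_q)$.

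Finally, applying the continuous mapping theorem to the convergence in law $\mu^{(N)} \Rightarrow \mu^{(\infty)}$ from Theorem~\ref{thm:hydro} yields $\mathbf{m}_N = \Phi(\mu^{(N)}) \Rightarrow \Phi(\mu^{(\infty)}) = \mathbf{m}$ in $\cD[0,T]$. The only real obstacle is the subtle integrability issue in the second paragraph, namely ensuring that $\mathcal{W}_q$-convergence with $q > 1$ is strong enough to pass the first moment to the limit; this is precisely the role played by the moment assumptions in Theorem~\ref{thm:hydro}. Once that continuity is secured, the remaining argument is a routine application of the continuous mapping theorem.
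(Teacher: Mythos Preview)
Your proposal is correct and follows exactly the paper's approach: the paper's entire proof is the one-line remark that by Lemma~\ref{lemma:technical-convergence} the mean functional $\nu \mapsto \overline{\nu}$ is continuous in $\mathcal P_q$, after which the corollary follows by the continuous mapping theorem applied to the convergence $\mu^{(N)} \Rightarrow \mu^{(\infty)}$ in $\cD([0,T],\mathcal P_q)$ from Theorem~\ref{thm:hydro}. Your write-up simply spells out the details of lifting pointwise continuity to the Skorokhod space; note that the passage from $\sup_t \mathcal W_q(\mu_n(\lambda_n(t)),\mu(t)) \to 0$ to $\sup_t |\varphi(\mu_n(\lambda_n(t))) - \varphi(\mu(t))| \to 0$ uses not just continuity of $\varphi$ but either its Lipschitz property (which holds since $|\overline{\nu}-\overline{\nu'}|\le \mathcal W_1(\nu,\nu')\le \mathcal W_q(\nu,\nu')$) or the relative compactness of the range of the c\`adl\`ag limit $\mu$.
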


\subsection{Default intensity independent of size} Under Assumptions~\ref{asmp:birth-measures},~\ref{asmp:birth-intensities},~\ref{asmp:default-measures},~\ref{asmp:default-intensities}, if the killing rate and the mean of the contagion measure
$$
\kappa_{\infty}(y, x) = \kappa_{\infty}(y)\quad \mbox{and}\quad \overline{\mathcal D}_{\infty}(y, x) \, =\,  \overline{\mathcal D}_{\infty}(y)
$$
are independent of the individual size $x$ but dependent only on the average of the system, we can solve the McKean-Vlasov equation explicitly. Indeed, in this case, in the limit $N \to \infty$ default intensities and default impacts are independent of the size of defaulting banks. In this case, we can rewrite~\eqref{eq:expression-for-psi} as $\psi(x, y) = \psi(y) = r -  \overline{\cD}_{\infty}(y)\kappa_{\infty}(y)$. Then we rewrite the McKean-Vlasov equation for $\mu^{(\infty)}$ as follows:
\begin{equation}
\label{eq:McKean}
Z(t) = Z(0) + \int_0^t\left[\psi(\mathbf{m}(s))\,Z(s)\,\md s  + \sigma Z(s)\,\md B(s)\right] + \sum\limits_{k=1}^{\mathbb N(t)}\Delta Z(\tau_k),\quad\, \mathbf{m}(t) = \E [Z(t)],
\end{equation}
with $B = (B(t),\, t \ge 0)$ being a Brownian motion; $\mathbb N(t)$ is a time-nonhomogeneous Poisson process with rate $\lambda_{\infty}(\mathbf{m}(t))$, with jump times $\tau_k$, and $Z(\tau_k)  \sim \mathcal B_{\infty, \mathbf{m}(\tau_k-)}$.  Assuming the function $\psi$ is Lipschitz continuous, equation \eqref{eq:McKean} has a unique solution for any initial condition, see for example \cite{Funaki}. Let us now solve \eqref{eq:McKean}. Its parameters: drift, volatility, and jump measure $\lambda_{\infty}(\cdot)\mathcal B_{\infty, \cdot}$, depend on the distribution of $Z(t)$ only through its mean $\mathbf{m}(t)$. Therefore, we can solve first for $\mathbf{m}(\cdot)$ and then for $Z(t)$. Take expectations in \eqref{eq:McKean}: 
\begin{equation}
\label{eq:ODE}
\mathbf{m}'(t) = \psi(\mathbf{m}(t))\mathbf{m}(t) + \lambda_{\infty}(\mathbf{m}(t))\left(\overline{\mathcal B}_\infty(\mathbf{m}(t)) - \mathbf{m}(t)\right), \quad \mathbf{m}(0) = \int_0^{\infty}x\mu^{(\infty)}_0(\md x).
\end{equation}
Assuming this (deterministic) ODE has a unique solution $\mathbf{m}(\cdot)$ we plug it in~\eqref{eq:McKean} to obtain that $Z$ is a geometric Brownian motion with time-dependent drift:
\begin{equation}
\label{eq:GBM}
Z(t) = Z(0)\exp\left[\int_0^t\left[\psi(\mathbf{m}(s)) - \sigma^2/2\right]\,\md s + \sigma B(t)\right],
\end{equation}
killed at rate $\lambda_{\infty}(\mathbf{m}(t))$, and resurrected according to $\mathcal B_{\infty, \mathbf{m}(t)}$. Let us find constant solutions $\mu^{(\infty)}_t \equiv \Pi$, or, equivalently, stationary solutions for the process $Z$ in \eqref{eq:GBM}. For any such solution, its mean $\mathbf{m}(t) \equiv M$ is also independent of $t$. Therefore,  we let the right-hand side of the ODE~\eqref{eq:ODE} to be equal to zero. This is an algebraic equation:
\begin{equation}
\label{eq:stat}
\psi(M)M + \lambda_{\infty}(M)\left(\overline{\mathcal B}_{\infty}(M) - M\right) = 0.
\end{equation}
For every solution $M > 0$ of this equation (which is notably independent of $\sigma$), from~\eqref{eq:GBM} we get geometric Brownian motion:
$$
\md Z(t) = Z(t)\left[\psi(M)\,\md t + \sigma\,\md B(t)\right],
$$
killed at constant rate $\lambda_{\infty}(M)$, and resurrected according to the probability measure $\mathcal B_{\infty, M}$. The most elementary case is when all limiting parameters are constant:
\begin{equation}
\label{eq:const-param}
\lambda_{\infty},\, \kappa_{\infty},\, \overline{\mathcal B}_{\infty},\,\overline{\cD}_{\infty}.
\end{equation}
Then the differential equation~\eqref{eq:ODE} takes the form
$$
\mathbf{m}'(t) = \lambda_{\infty}\overline{\mathcal B}_{\infty} + \gamma \mathbf{m}(t), \quad \text{ where } \quad \gamma := \left(r - \overline{\cD}_{\infty}\kappa_{\infty} - \lambda_{\infty}\right).
$$
Given the initial condition $\mathbf{m}(0)$, the solution of this first-order linear equation is
\begin{align}\label{eq:m-ode}
\mathbf{m}(t) = \left(\mathbf{m}(0) - \frac{\lambda_{\infty}\overline{\mathcal B}_{\infty}}{\gamma} \right)e^{-\gamma t} + \frac{\lambda_{\infty}\overline{\mathcal B}_{\infty}}{\gamma}.
\end{align}
If $\gamma \ne 0$, there exists a unique solution to the algebraic equation~\eqref{eq:stat}, which is the limit for the solution $ \mathbf m(t)$ of the differential equation~\eqref{eq:m-ode}:
$M = \gamma^{-1}\lambda_{\infty}\overline{\mathcal B}_{\infty} = \lim_{t \to \infty} \mathbf{m}(t)$.
The left panel of Figure \ref{fig:conv} illustrates the mean field limit in the constant default intensity case. We take $\lambda_n(s) = 0.2n, \kappa_n(s,x) = 0.1$, $\mathcal{B}_{n,s} \sim \Exp(1)$ and $\mathcal{D}_{n,s,x} \sim \Uni(0, n^{-1})$, and $r=0.05$. Note that in the mean field limit $\lambda_\infty = 0.2$, $\kappa_\infty(x) = 1$ and $\overline{\mathcal{B}}_\infty = 1, \overline{\mathcal{D}}_\infty = 0.5$, leading to 
$$
M = \frac{0.2\cdot 1}{0.2 + 0.1 \cdot 0.5 - 0.05} = 1.
$$
In Figure~\ref{fig:conv} we initialize with $\mu_0^{(N)} \sim \Exp(0.5)$ so that $\mathbf{m}(0) = 2$ and the solution of \eqref{eq:m-ode} reads as $\mathbf{m}(t) = 1 \exp( - 0.2t) + 1$. The figure shows the simulated distribution of $\mathbf{m}(t)$ based on running 100 paths of the pre-limit system $X^{(N)}$ with $N=5,25, 100$. For each run $i=1,\ldots,100$ we compute the resulting $m^{i}_N(t)$ as the empirical average bank size at step $t$ and finally plot $\Ave( m^{i}_N(t) )$, as well as the $5\%-95\%$ quantiles of $m^{i}_N(t)$ across the $100$ runs. The latter visualize the variance of $m_N(t)$; as expected as $N$ increases, $m_N(t)$ converges in distribution to the deterministic limit $\mathbf{m}(t)$ reported above. We note that in this example due to the limited interaction among the banks and the light-tailed  default and birth distributions, the convergence is very rapid so already $\E[ m_N(t) ] \simeq \mathbf{m}(t)$ even for very small $N=5$.

\begin{figure}[t]
\begin{center}
\includegraphics[height=3in,width=3in]{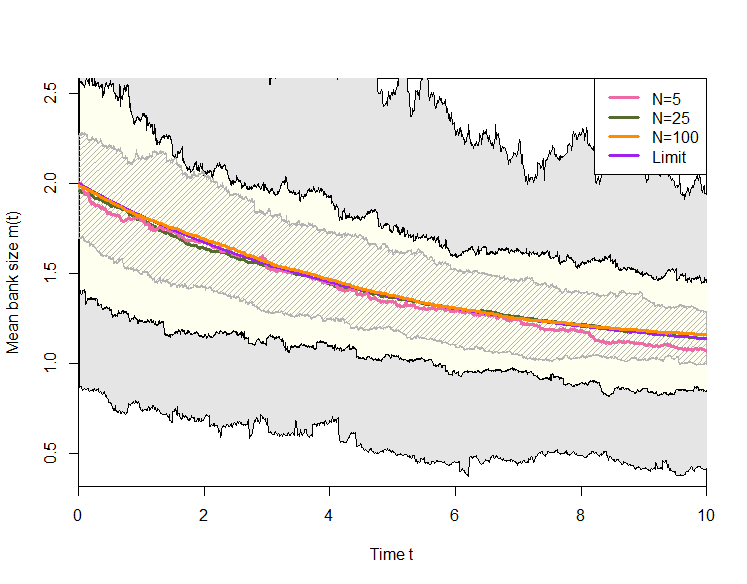}
\includegraphics[height=3in,width=3in]{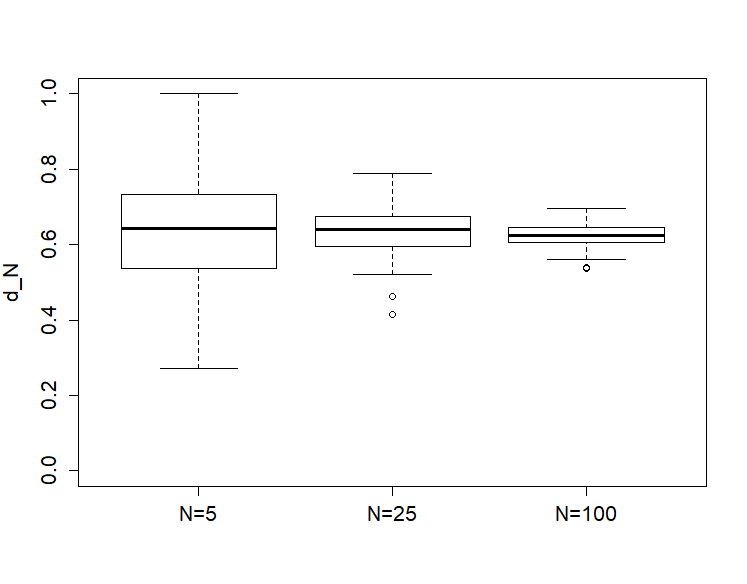}
\end{center}
\caption{Left panel: distribution of $m_N(t)$ on $t \in [0,10]$ and $N=5,25,100$ based on $100$ simulated trajectories of $X^{(N)}$. The initial distribution is $\mu^{(N)}_0 \sim \Exp(0.5)$ so that $m_N(0) = 2$.  Right panel: distribution of $d_N := (\mu^{(N)}_T, 1_{[0,D]})$, proportion of banks with reserves less than $D=1$ at $T=10$.}
\label{fig:conv}
\end{figure}


\subsection{Capital distribution}
The mean field limit offers insight into the bank reserves distribution which is key to analyzing the probability of {\it systemic events}: when many banks default or have low reserves. For example, in structural models there is typically a risk threshold $D > 0$ so that banks whose reserves are below $D$ are viewed as insufficiently capitalized. Taking $f_D(x) := 1_{\{x \le D\}}$, the systemic risk of the banking network at epoch $t$ can be assessed as
\begin{align}\label{eq:systemic-risk}
\frac{|\#\{i \in I(t)\mid X_i(t) \le D\}|}{N(t)} = \bigl(\mu_t^{(N)}, f_D\bigr).
\end{align}
As $N \to \infty$, empirical measures $\mu^{(N)}_t$ converge in $\mathcal P_q$ (and therefore weakly) to a deterministic measure
$\mu^{(\infty)}_t$, which is absolutely continuous and hence $(\mu^{(N)}, f_D) \to \mu^{(\infty)}_t(0,D)$. At the same time, as $t$ becomes large, $\mu^{(\infty)}_t$ converges to its stationary distribution $\Pi$, so the fraction of banks below $D$ approaches $\Pi(0,D]$.

The right panel of Figure \ref{fig:conv} shows the distribution of $d_N := (\mu^{(N)}_t, f_D)$ at fixed $t$ as we vary $N$. Specifically, we use the same setting as in the left panel of that Figure and take $D = 1$. As expected, $(\mu^{(N)}_t, f_D)$ becomes more deterministic as $N$ grows and the empirical fluctuations decrease. In the Figure, we see that about 60\% of the banks will have assets below $D=1$ at $T=10$. The take-home message is that analysis of $\Pi$ (and $\mu^{(\infty)}_\cdot$ for shorter-term objectives) holds the key for understanding the financial riskiness of the system, for example whether the banks tend to cluster into distinct groups (small banks, large banks, etc.)



\subsection{Propagation of chaos} Let us further describe the behavior of a typical bank  as the number of banks tends to infinity. Consider, for example, the first bank $X_1$ starting from time $t = 0$. 

\begin{thm} Assume $X_1^{(N)}(0)$ is deterministic for every $N \ge 1$, and $X_1^{(N)}(0) \to x_1$ as $N \to \infty$. As $N \to \infty$, $X_1^{(N)} \Rightarrow X_1^{(\infty)}$ weakly in $\cD[0, T]$, where $X_1^{(\infty)}$ is a solution to the following  stochastic differential equation:
\begin{align}\label{eq:x-infty-sde}
\md X_1^{(\infty)}(t) = \psi\bigl(X_1^{(\infty)}(t), \mathbf{m}(t)\bigr)\,X_1^{(\infty)}(t)\,\md t + \sigma X_1^{(\infty)}(t)\,\md W(t),
\end{align}
starting from $x_1$, killed with rate $\kappa_{\infty}(\mathbf{m}(t),X_1^{(\infty)}(t))$.
\label{thm:individual}
\end{thm}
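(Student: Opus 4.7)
The plan is to apply the standard tightness and identification scheme for McKean-Vlasov limits, leveraging the convergence of empirical measures $\mu^{(N)} \Rightarrow \mu^{(\infty)}$ from Theorem~\ref{thm:hydro} and the accompanying corollary $\mathbf{m}_N(\cdot) \Rightarrow \mathbf{m}(\cdot)$ in $\cD[0,T]$. To handle the killing of $X_1$, I would enlarge the one-particle state space to $(0,\infty) \cup \{\Delta\}$ with the cemetery convention $f(\Delta) = 0$, so that $X_1^{(N)}$ is well-defined and rcll on a Polish space.

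The first step is to write down the martingale problem for $X_1^{(N)}$. For $f \in \mathcal C^2_b$ extended by $f(\Delta)=0$, conditionally on the full state the process $f(X_1^{(N)}(t)) - f(X_1^{(N)}(0)) - \int_0^t L^{(N)}_s f(X_1^{(N)}(s))\,\md s$ is a martingale, where $L^{(N)}_s$ consists of the geometric Brownian motion generator $\cG$, a killing term $-\kappa_N(\mathcal{S}_N(s), X_1^{(N)}(s))\,f(X_1^{(N)}(s))$, and a sum of multiplicative-jump terms $\sum_{j \in I(s),\, j \ne 1} \kappa_N(\mathcal{S}_N(s), X_j^{(N)}(s)) \int_0^1 [f(X_1^{(N)}(s)(1-z)) - f(X_1^{(N)}(s))]\, \cD_{N,\mathcal{S}_N(s), X_j^{(N)}(s)}(\md z)$ arising from defaults of the other banks. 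Tightness of $\{X_1^{(N)}\}_{N \ge 1}$ in $\cD([0,T], (0,\infty)\cup\{\Delta\})$ then follows from geometric Brownian motion moment bounds combined with the uniform upper bound on the scaled default impact from Remark~\ref{rmk:technical-default-measures}, plus Aldous' criterion: the total rate of other-bank defaults is $O(N)$, but each jump has $L^2$ size $O(1/N)$ by~\eqref{eq:D-const}, so the predictable quadratic variation of the compensated jump part is $O(1)$ uniformly in $N$.

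For identification I would Taylor-expand $f(x_1(1-z)) - f(x_1) = -x_1 z f'(x_1) + \tfrac12 x_1^2 z^2 f''(x_1) + O(z^3)$ inside the jump integrals. The quadratic and higher-order terms vanish in the limit by the scaling~\eqref{eq:D-const} with $q \in \{2,3\}$, while the linear term, summed over $j \ne 1$, becomes a Riemann sum against $\mu^{(N)}(s)$ of $\kappa_N(\mathcal{S}_N(s), \cdot)\,N\overline{\cD}(N, \mathcal{S}_N(s), \cdot)$. Using Assumptions~\ref{asmp:default-measures}--\ref{asmp:default-intensities} and continuity of $\kappa_\infty$ and $\overline{\cD}_\infty$, together with Theorem~\ref{thm:hydro} (whose deterministic limit $\mu^{(\infty)}$ automatically gives joint convergence of $(X_1^{(N)}, \mu^{(N)})$ once the $X_1^{(N)}$-marginals are tight), this sum converges to the mean-field drift correction that turns $\cG$ into $\cG_{\mathbf{m}(t)}$ as in~\eqref{eq:G-adjusted}, producing the drift $\psi(X_1^{(\infty)}, \mathbf{m}(t))\, X_1^{(\infty)}$ of~\eqref{eq:x-infty-sde}. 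The killing term passes directly to $-\kappa_\infty(\mathbf{m}(t), X_1^{(\infty)}(s))\,f(X_1^{(\infty)}(s))$ by continuity of $\kappa_\infty$ and the convergence $\mathbf{m}_N \Rightarrow \mathbf{m}$.

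Uniqueness of the limit is standard: with $\mathbf{m}(\cdot)$ a known deterministic function from the preceding analysis,~\eqref{eq:x-infty-sde} reduces to a time-inhomogeneous SDE with Lipschitz drift $x \mapsto \psi(x, \mathbf{m}(t))\,x$, constant diffusion coefficient $\sigma x$, and bounded killing rate, admitting a unique (sub-)Markov solution. This pins down the limiting martingale problem and yields $X_1^{(N)} \Rightarrow X_1^{(\infty)}$. The main obstacle is controlling the sum over $j \ne 1$ of the state-dependent scaled rates $\kappa_N(\mathcal{S}_N, X_j^{(N)})\,N\overline{\cD}(N, \mathcal{S}_N, X_j^{(N)})$ uniformly on $[0,T]$: this requires the joint (not merely marginal) weak convergence of $(X_1^{(N)}, \mu^{(N)})$ together with the uniform $p$-th moment bounds from~\eqref{eq:D-const}, so that the Taylor remainders are uniformly negligible in both time and state.
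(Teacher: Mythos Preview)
Your proposal is correct and follows essentially the same route as the paper: writing the generator $\mathfrak{L}$ applied to $\varphi_f(\mathbf{x}) := f(x_1)$ (which produces exactly your three pieces $\cG f$, the killing term, and the sum over $j \ne 1$ of contagion-jump integrals), Taylor-expanding the jump integrals via Lemma~\ref{lemma:Taylor} to extract the mean-field drift correction, establishing tightness of $(X_1^{(N)})_{N \ge 1}$ in $\cD[0,T]$, and then identifying any subsequential limit through the martingale problem for the killed diffusion with generator $\mathcal{A}^*_\nu$ from~\eqref{eq:A-star}. The paper organizes these steps as Lemmata~\ref{lemma:convergence-x-1}--\ref{lemma:mgle-tight}, parallel to the proof of Theorem~\ref{thm:hydro}, and invokes the Ethier--Kurtz compact-containment criterion rather than Aldous, but the substance is the same; your explicit remark that the deterministic limit $\mu^{(\infty)}$ upgrades marginal tightness to joint convergence of $(X_1^{(N)}, \mu^{(N)})$ is exactly what is used implicitly when the paper combines Theorem~\ref{thm:hydro} with Lemma~\ref{lemma:convergence-x-1}.
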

The proof of Theorem~\ref{thm:individual} is in Section~\ref{sec:theorem42}.
Observe that compared to \eqref{eq:Vlasov}, the limiting dynamics of $X_1$ are simpler: there is still a mean-field interaction through $\mathbf{m}(t)$, but solely via a mean-field killing rate. Births and hence jumps disappear. We can state this result as follows, recalling the definition of the generator~\eqref{eq:G-adjusted}: \eqref{eq:x-infty-sde} is a McKean-Vlasov diffusion with generator 
\begin{align}
\label{eq:A-star}
  \mathcal A^*_{\nu}f(x_1) = \cG_{\overline{\nu}}f(x_1) - \kappa_{\infty}\left( \overline{\nu},x_1\right)f(x_1).
\end{align}
Similarly to Theorem~\ref{thm:individual}, we have {\it propagation of chaos.} Namely, consider the first $k$ banks instead of only the first one: 
$
(X_1^{(N)}, \ldots, X_k^{(N)}).
$
One can show that the resulting limit  in $\cD([0, T], \mathbb R^k)$ as $N \to \infty$ is a vector of $k$ independent copies of the killed geometric Brownian motion described above: Dependence between the banks vanishes in the limit.

\smallskip

Financially, propagation of chaos offers two convenient features: (1) it abstracts away the complex bilateral dependencies that may exist between individual banks; (2) it distinguishes clearly between the global recurrent nature of the banking system and the individual banks that have finite lifetime (assuming suitable conditions on $\kappa_\infty$ which are expected to hold in realistic settings). The latter is the major difference between a representative particle $Z$ that is infinite-lived, and the prototypical bank $X_1$ that lives for some time and eventually defaults.

\begin{figure}[t]
\begin{center}
\includegraphics[scale=0.5, trim=0.25in 0.25in 0.25in 0.25in]{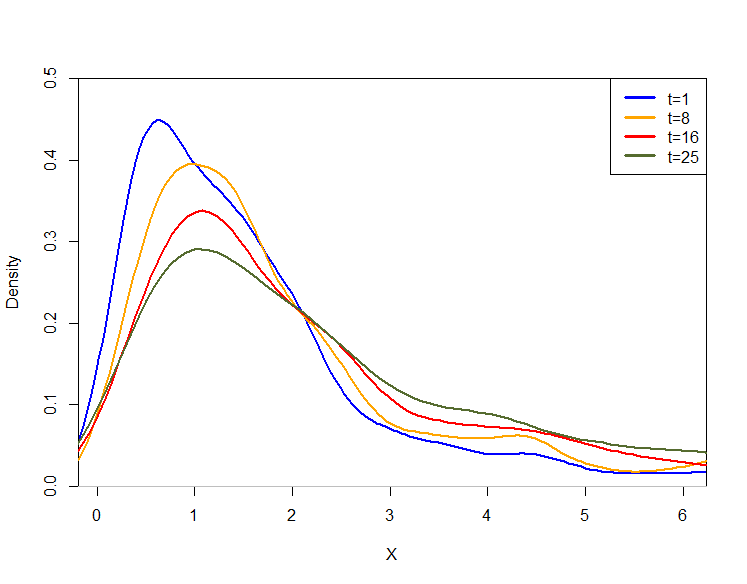}
\end{center}
\caption{Density of $\nu_t$ approximated through an empirical distribution of $\check{X}$ of size $N'=500$ (smoothed via built-in kernel density estimator in MATLAB) at $t=1,8,16,25$, with parameters inherited from Figure 1. The initial condition $\nu_0$ has the law of $e^Z$ for $Z \sim \mathcal N(0, 1)$ with time step $\Delta t=10^{-3}$.} 
\label{fig:mkv}
\end{figure}

\subsection{Illustrating the McKean-Vlasov equation} The limiting McKean-Vlasov equation can be studied using Monte Carlo approximation. Namely, the measures $\mu^{(\infty)}_t$ can be approximated through an empirical distribution of a system $\check{X}$ of $N'$ interacting particles. The particles follow the dynamics of the dummy $\{Z(t)\}$, i.e.,~behave like geometric Brownian motions that are killed and restarted. Note that in contrast to the pre-limit systems $X^{(N)}$, $\check{X}$ has a fixed dimension, $\mathfrak{n}( \check{X}(t) ) = N' \, $ for every $t$. Thus, its dynamics are only in terms of the empirical mean $\check{m}(t) := \frac{1}{N'} \sum_{i=1}^{N'} \check{X}^{(i)}(t)$, rather than system size $N$ and sum $S$. In turn, we may simulate $\check{X}$ using standard tools, for example an Euler scheme with a fixed time-step $\Delta t$.

\smallskip

To do so, each particle $\check{X}^{(i)}$ follows on $[t,t+\Delta t]$ the geometric Brownian motion dynamics with drift $\psi( \check{X}^{(i)}(t), \check{m}(t))$ and volatility $\sigma$, driven by independent Brownian motions $\check{W}_i(\cdot)$. In addition, each particle carries two exponential clocks that fire off at rates $\lambda_\infty( \check{m}(t) )$ and $\kappa_\infty( \check{m}(t), \check{X}^{(i)}(t))$ respectively. Alarms of the first type result in regeneration, i.e.,~the respective particle instantaneously jumps from its current location $\check{X}^{(i)}(t-)$ to a location $\zeta \sim \mathcal{B}_{\infty, \check{m}(t)}$, generated independently of everything else. Alarms of the second type result in resampling due to non-uniform default rates: the particle jumps from $\check{X}^{(i)}(t-)$ to the location of another particle $j$, $\check{X}^{(i)}(t) = \check{X}^{(j)}(t)$, with index $j$ sampled uniformly from $\{1, 2, \ldots, i-1, i+1, \ldots, N'\}$. After this mutation procedure, which can be interpreted as killing particle $i$ and replacing it with a child of particle $j$, the two ``sibling'' particles resume independent movements as geometric Brownian motions. 

\smallskip

Figure \ref{fig:mkv} shows the distribution of the McKean-Vlasov solution: the density $\nu_t$ (which has no closed-form expression) for several values of $t$ with the state-dependent default rates. We take limits of parameters from Figure 1. That is, 
$$
\lambda_{\infty}(y) = 1,\quad \mathcal B_{\infty, y} \sim \Exp(1),\quad \kappa_{\infty}(y, x) = \frac{0.2}{0.01+x},\quad \mathcal D_{\infty, y, x} \sim \Uni[0, 1].
$$

\section{Large-Scale Behavior: Second Setting}\label{sec:mf-N}

For a systemic risk application, our main interest is to build a model with a stationary $\{ N(t) \}$. Indeed, we wish to have a dynamic banking network that expands and shrinks over time but is globally infinite-lived, even if individual banks have finite lifetimes. However, observe that in the setup above, asymptotically both the birth rate $\lambda_n(s)$ and the aggregate default rate $n \cdot \kappa_n(s,x)$ are linear in $n$. Thus  they are comparable, and either births or defaults will ultimately dominate, so that the number of banks $N(t)$ will exponentially grow/shrink in $t$. In other words, starting with a finite $N(0)$, $\E[N(t)]$ will then either exponentially grow to $+\infty$ or exponentially collapse to $0$, neither of which are financially plausible.

\smallskip

To circumvent this issue (which is ultimately not important in the mean-field limit), in this section we consider the case when all parameters of the $N$th system, with $N$ the initial number of banks, are independent of $n$, the current number of banks, but depend on the initial $N$. The motivation is to have models with \emph{constant} birth rates, whereby $\{N(t)\}$ roughly behaves as a linear birth-and-death process with the classical Poisson stationary distribution. We then again scale the systems to recover a (different!) McKean-Vlasov limit. This setting also intrinsically ensures the global recurrence.

\smallskip

To do so, we need to adjust Assumptions~\ref{asmp:birth-measures},~\ref{asmp:birth-intensities},~\ref{asmp:default-measures},~\ref{asmp:default-intensities}, accordingly. Everywhere instead of subscript $n$ we now write $N$, because we now index parameters by the initial size $N$.
Consider a sequence $(X^{(N)})_{N \ge 1}$ of banking systems with the initial values
$X^{(N)}(0) = x^{(N)}_0 \in \mathbb R^{N},\quad \mathfrak n(x^{(N)}_0) = N$. 
The $N$th system $X^{(N)}$ is governed by birth intensities $\lambda_{N, n}(ny) = \lambda_N(y)$, birth measures $\mathcal B_{N, n, ny} = \mathcal B_{N, y}$, default intensities $\kappa_{N, n}(ny, x) = \kappa_N(y, x)$, and default contagion measures $\mathcal D_{N, n, ny, x} = \mathcal D_{N, y, x}$. In all these assumptions, we abuse the notation by dropping the dependence on $n$ (there is now only indirect dependence through $y = s/n$, the mean of the system).

\begin{asmp} As $N \to \infty$, $\mathcal B_{N, y} \to \mathcal B_{\infty, y}$ in the Wasserstein distance $\mathcal W_p$, uniformly over $y > 0$, with the family $(\mathcal B_{\infty, y})_{y > 0}$ continuous in $\mathcal W_p$; and the measures $\mathcal B_{N, y}$ have uniformly bounded $p$-th moments for all $N, y$.
\label{asmp:birth-measures-new}
\end{asmp}

\begin{asmp}  As $N \to \infty$, we assume uniform convergence to a continuous limit $\lambda_{\infty}$:
$$
\frac{\lambda_N(y)}{N} \to \lambda_{\infty}(y),\, y > 0
$$
uniformly in $y >0$; and for some constant $C_{\lambda}$, we have
$\lambda_N(y) \le C_{\lambda}(N + y)$ for all $N,y$.
\label{asmp:birth-intensities-new}
\end{asmp}

\begin{asmp}
\label{asmp:default-measures-new}
If $\xi_{N, y, x} \sim \cD_{N, y, x}$, then $N\xi_{N, y, x} \to \xi_{\infty, y,x}$ as $N \to \infty$ in the Wasserstein distance $\mathcal W_p$ uniformly over all $x, y > 0$, where the family of measures $(\xi_{\infty, y,x})_{x, y > 0}$ is continuous in $\mathcal W_p$ jointly in $x$ and $y$; and $N\xi_{N, y, x}$ has uniformly bounded $p$th moment over all $N, x, y$. We denote the corresponding limiting measure by $\cD_{\infty, y,x}$. \end{asmp}

\begin{asmp}
\label{asmp:default-intensities-new}
As $N \to \infty$, uniformly over $y, x \in (0, \infty)$, we have: $\kappa_N(y,x) \to \kappa_{\infty}(y,x)$, with $\kappa_{\infty}(y,x)$ continuous in $y$. Moreover, there exists a constant $C_{\kappa}$ independent of $x, y, N$ such that $\kappa_N(y,x) \le C_{\kappa}$ for all $N ,x, y$.
\end{asmp}

\begin{exm}
Continuing the example from Figure~\ref{fig:conv}, we take $\lambda_N(s) = \bar{\lambda} N = 0.2 N$; $\kappa_N(s,x) =  \kappa_\infty(x) = 0.1$; $\cD_{N,s,x} = \Uni(0, 1/N)$, so that $\cD_{\infty,s,x} = \Uni(0, 1)$, and $\mathcal{B}_{N,n,s} = \Exp(1)$ so that $\mathcal{B}_\infty  = \Exp(1)$. This implies $\overline{\cD}_\infty = 0.5$ and $\overline{\mathcal B}_\infty = 1$. 
\end{exm}

Similarly to~\eqref{eq:G-adjusted},~\eqref{eq:main-generator}, define
\begin{align}
\label{eq:new-operators}
\begin{split}
\tilde{\mathcal L}_{n, \nu}f(z) & := \cG_{n, \overline{\nu}}f(z) + \frac{\lambda_{\infty}\left(\overline{\nu}\right)}{n}\int_0^{\infty}\left[f(w) - f(z)\right]\mathcal B_{\infty, \overline{\nu}}(\mathrm{d}w) \\ & + \kappa_{\infty}(\overline{\nu}, z)\int_0^{\infty}\left[f(w) - f(z)\right]\,\nu(\mathrm{d}w);\\ \mbox{where} & \quad \cG_{n, y}f(z) =  \left[r - n\kappa_{\infty}(y, z)\overline{\cD}_{\infty}(y, z)\right]D_1f(z) + \frac12\sigma^2D^2f(z).
\end{split}
\end{align}
Similarly to~\eqref{eq:main-operator}, we  apply the current distribution $\nu$ to the generator $\tilde{\mathcal L}_{n, \nu}$ in~\eqref{eq:new-operators} and define
\begin{align}
\label{eq:new-operator}
\begin{split}
\tilde{\mathcal A}(n, \nu, f) := \Bigl[r  & - \kappa_{\infty}(\overline{\nu}, z)\overline{\cD}_{\infty}(\overline{\nu}, z) n_{\infty}\Bigr](\nu, D_1f)  + \frac12\sigma^2(\nu, D_2f) \\ & +  n^{-1}_{\infty}\Bigl[\lambda_{\infty}\left(\overline{\nu}\right)\left(\mathcal B_{\infty, \overline{\nu}}, f\right)  - \lambda_{\infty}\left(\overline{\nu}\right)\left(\nu, f\right)\Bigr]  \\ & +  (\nu, f) \left[\left(\nu, \kappa_{\infty}\left(\overline{\nu}, \cdot\right)\right) -  \left(\nu, \kappa_{\infty}\left(\overline{\nu}, \cdot\right)f\right)\right].
\end{split}
\end{align}
Consider the following McKean-Vlasov jump-diffusion $\tZ = (\tZ(t),\, t \ge 0)$,  with $\tm(t) = \mathbb E[ \tZ(t)]$, and $\tilde{\mu}^{(\infty)}_t \sim \tZ(t)$. Its generator at time $t$ is the version of the generator ~\eqref{eq:new-operators} (cf. ~\eqref{eq:main-generator}):
\begin{align}
\label{eq:new-generator}
\tilde{\mathcal L}_t  & := \tilde{\mathcal L}_{\mathcal N_{\infty}(t), \, \tilde{\mu}^{(\infty)}_t },
\end{align}
where the function $\mathcal N_{\infty} : \mathbb R_+ \to \mathbb R_+$ is the solution to the following linear first-order ODE:
\begin{equation}
\label{eq:ratio-limit}
\mathcal N'_{\infty}(t) = \lambda_{\infty}(\tm(t)) - \mathcal N_{\infty}(t)\bigl(\tilde{\mu}^{(\infty)}_t, \kappa_{\infty}(\cdot, \tm(t))\bigr),\quad \mathcal N_{\infty}(0) = 1.
\end{equation}
The role of $\mathcal N_{\infty}(t)$ is to scale the system size at time $t$ relative to its initial size $N$ at time $0$:
\[
\lim_{N\to \infty} \frac{\, \mathcal N_{N}(t) \,}{\,N\,}  = \mathcal N_{\infty}(t) .
\]
Solving this deterministic ODE \eqref{eq:ratio-limit} as follows:
\begin{align}
\label{eq:solution-ratio-limit}
\begin{split}
\mathcal N_{\infty}(t) & = \left[\mathcal K(t)\right]^{-1}\left[1 + \int_0^t\lambda_{\infty}(\tm(s))\mathcal K(s)\,\mathrm{d}s\right], \\
\mathcal K(t) & := \exp\left[\int_0^t(\tilde{\mu}^{(\infty)}_u, \kappa_{\infty}(\tm(u), \cdot))\,\mathrm{d}u\right],
\end{split}
\end{align}
 and plugging back into~\eqref{eq:new-generator}, we rewrite it as a McKean-Vlasov jump-diffusion, which is {\it time-inhomogeneous:}
Its parameters (specifically, the drift coefficient and the jump measure) depend on time $t$; in fact through $\mathcal N_{\infty}$, the $t$-dynamics depend on the whole history: $\tilde{\mu}^{(\infty)}_s,\, 0 \le s \le t$, rather than $\tilde{\mu}^{(\infty)}_t$ and $\tZ(t)$.

\smallskip

In the following formulae~\eqref{eq:new-diffusion}, ~\eqref{eq:new-drift-coefficient}, ~\eqref{eq:jump-measures-new}, $z \in \mathcal D[0, t]$, where $t$ is another argument. The argument $\tilde{\mu}^{(\infty)}$ represents a measure-valued function $(\tilde{\mu}^{(\infty)}_s,\, 0 \le s \le t)$. Its mean at time $t$ is denoted by $\tilde{\mathbf{m}}(t)$. The diffusion coefficient is very similar to the one in the first mean-field limit. (Slightly abusing the notation, we use $\sigma$ both for this coefficient and for the original volatility of each bank.)
\begin{equation}
\label{eq:new-diffusion}
\sigma\left(t, \tilde{\mu}^{(\infty)}, z\right) = \sigma z(t).
\end{equation}
The new drift coefficient is, however, different; it is given by
\begin{align}
\label{eq:new-drift-coefficient}
\begin{split}
g\left(t, \tilde{\mu}^{(\infty)}, z\right) & = z(t)\tilde{\psi}\left(\mathcal N_{\infty}(t), \tilde{\mathbf{m}}(t), z(t)\right), \\  \tilde{\psi}(n, y, v)  & := r - n\kappa_{\infty}(y, v)\overline{\cD}_{\infty}(y, v).
\end{split}
\end{align}
Thus, the counterpart $\mathcal G_{n, y}$ of $\mathcal G_y$ from~\eqref{eq:G-adjusted} can be written as
$$
\mathcal G_{n, y}f(v) = \tilde{\psi}(n, y, v)D_1 f(v) + \frac{\sigma^2}2D_2f(v).
$$
Finally,  the new jump measure is given by (compare with~\eqref{eq:jump-measures}):
\begin{equation}
\label{eq:jump-measures-new}
\tilde{\mu}_{t, z, \tilde{\mu}^{(\infty)}}(\mathrm{d}w) =  \mathcal N^{-1}_{\infty}(t)\lambda_{\infty}\left(\tilde{\mathbf{m}}(t)\right)\,\mathcal B_{\infty, \tilde{\mathbf{m}}(t)}(\mathrm{d}w) + \kappa_{\infty}(\tilde{\mathbf{m}}(t), z(t))\,\tilde{\mu}^{(\infty)}_t(\mathrm{d}w).
\end{equation}

\begin{rmk}
\label{rmk:existence-uniqueness}
The magnitude $\mathcal N_{\infty}(t)$, as a function of $(\tilde{\mu}^{(\infty)}_u,\, 0 \le u \le s)$, is bounded and Lipschitz with respect to 
$\mathcal W_p$ in $\mathcal P_p(B[0, t])$, with Lipschitz constant uniform in $t$. Then, the drift and diffusion coefficients from~\eqref{eq:new-diffusion},~\eqref{eq:new-drift-coefficient},~\eqref{eq:jump-measures-new} are Lipschitz with respect to 
$$
z = (z(u),\, 0 \le u \le t),\, (\tilde{\mu}^{(\infty)}_u,\, 0 \le u \le t),
$$
uniformly in $t$. This allows us to use the result of Remark~\ref{rmk:history}.
\end{rmk}

The following is a counterpart of our result in Theorem~\ref{thm:hydro}, with proof given in Section~\ref{sec:proof-new}.

\begin{thm} Fix $p > 1$. Assume initial empirical measures converge in $\mathcal W_p$:
$\mu^{(N)}_0 \Rightarrow \tilde\mu^{(\infty)}_0$. For every $T > 0$ and 
$q \in [1, p)$, under Assumptions~\ref{asmp:birth-measures-new},~\ref{asmp:birth-intensities-new},~\ref{asmp:default-measures-new},~\ref{asmp:default-intensities-new}, we have  convergence in law in the Skorohod space $\cD([0, T], \mathcal P_{q})$:
$$
\mu^{(N)} \Rightarrow \tilde{\mu}^{(\infty)}\ \mbox{as}\ N \to \infty
$$
where $\tilde{\mu}^{(\infty)}$ is a McKean-Vlasov-It\^o process with generator \eqref{eq:new-generator}.
\label{thm:hydro-new}
\end{thm}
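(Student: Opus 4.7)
My plan is to follow the martingale-problem strategy used for Theorem~\ref{thm:hydro} (whose proof is in Section~\ref{sec:theorem41}), adapted to the path-dependent, time-inhomogeneous McKean-Vlasov setting of Remark~\ref{rmk:history}. The key observation is that, under the new scaling, the drift picks up an $\mathcal{N}_N(t)/N$ factor (since $\overline{\mathcal D}_{N,y,x}\sim 1/N$ while the aggregate default rate $\mathcal N_N(t)\kappa_N(\cdot)$ is of order $\mathcal N_N(t)$), whereas the birth rate $\lambda_N(y)$ carries a factor $N$ which is normalized by the current population $\mathcal N_N(t)$ in the expression for $\mathcal E_f(\mu_t^{(N)})$. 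Thus the ratio $\rho_N(t):=\mathcal N_N(t)/N$ shows up as the mean-field analogue of the current-to-initial size. The first step, therefore, is to derive a joint martingale decomposition for the pair $\bigl(\rho_N(t), \mu_t^{(N)}\bigr)$ by applying the generator $\mathfrak L$ in~\eqref{eq:generator} to test functionals of the form $(\bx,\mathfrak i,\mathfrak m)\mapsto h(\mathfrak n(\bx)/N)\,\mathcal E_f(\bx)$ and to $h(\mathfrak n(\bx)/N)$ alone, producing an expression that, after replacing $\mathcal D$ and $\lambda$ by their rescaled limits from Assumptions~\ref{asmp:birth-measures-new}--\ref{asmp:default-intensities-new}, approximates $\tilde{\mathcal A}(\mathcal N_N(t)/N,\mu_t^{(N)},f)$ of~\eqref{eq:new-operator} plus the ODE~\eqref{eq:ratio-limit}.

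Second, I would establish tightness of $\bigl(\rho_N,\mu^{(N)}\bigr)$ in $\cD([0,T],\mathbb R_+\times\mathcal P_q)$. Uniform $L^p$-in-time moment control on $\rho_N$ and on $(\mu_t^{(N)},f_p)$ follows from the Lyapunov estimate of Theorem~\ref{thm:conservative} with $V_0$ in~\eqref{eq:Lyapunov-example}: the bound $\lambda_N(y)\le C_\lambda(N+y)$, the uniform upper bound $\kappa_N\le C_\kappa$, and the uniformly bounded $p$-th moments of $\mathcal B_{N,y}$ and $N\xi_{N,y,x}$ give $\E[\mathcal S_N(t)+\mathcal N_N(t)]\le C(1+N)e^{Ct}$, and an analogous moment bound for $\mathfrak n(\bx)^{-1}\sum x_i^p$. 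Tightness in the Skorohod space is then obtained via the Aldous--Rebolledo criterion applied to the martingale-plus-finite-variation decomposition from step one, testing against a dense countable family $\{f_k\}\subset\mathcal C^2_b$ for the projective tightness in $\mathcal P_q$ (using Lemma~\ref{lemma:technical-convergence} for $q\in[1,p)$).

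Third, I identify any weak subsequential limit $(\mathcal N_*,\tilde\mu^{(\infty)})$. Testing the pre-limit martingale with $f\equiv 1$ and passing to the limit yields the linear ODE
\begin{equation*}
\mathcal N_*'(t)=\lambda_\infty(\overline{\tilde\mu^{(\infty)}_t})-\mathcal N_*(t)\bigl(\tilde\mu^{(\infty)}_t,\kappa_\infty(\overline{\tilde\mu^{(\infty)}_t},\cdot)\bigr),\quad \mathcal N_*(0)=1,
\end{equation*}
which identifies $\mathcal N_*\equiv\mathcal N_\infty$ in the notation of~\eqref{eq:ratio-limit} (the explicit form~\eqref{eq:solution-ratio-limit} follows). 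Testing with general $f\in\mathcal C^2_b$ and exploiting the uniform convergences in Assumptions~\ref{asmp:birth-measures-new}--\ref{asmp:default-intensities-new} together with the continuity of $\nu\mapsto\overline\nu$ in $\mathcal W_q$ shows that $\tilde\mu^{(\infty)}$ solves the path-dependent martingale problem associated with the generator~\eqref{eq:new-generator}. The Lipschitz bounds in Remark~\ref{rmk:existence-uniqueness} place us in the framework of Remark~\ref{rmk:history}, so the extended version of Lemma~\ref{lemma:Vlasov} furnishes uniqueness of the solution and hence full weak convergence of the sequence.

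The main obstacle will be the joint passage to the limit in the drift term, whose pre-limit coefficient is $n\,\kappa_N(s/n,x_i)\,\overline{\mathcal D}_{N,s/n,x_i}$. This must be reshuffled as $\rho_N(t)\cdot N\overline{\mathcal D}_{N,s/n,x_i}\cdot\kappa_N(s/n,x_i)\to\mathcal N_\infty(t)\,\overline{\mathcal D}_\infty(\overline\nu,x_i)\,\kappa_\infty(\overline\nu,x_i)$ to recover $\tilde\psi$ in~\eqref{eq:new-drift-coefficient}, and the convergence has to be shown \emph{uniformly on compacts in time and averaged in the empirical measure}. This requires a uniform integrability argument on $(\mu_t^{(N)},\kappa_\infty\overline{\mathcal D}_\infty\cdot f_1)$, which follows from the moment bound on $(\mu_t^{(N)},f_p)$ for $p>1$ combined with the uniform boundedness of $\kappa_\infty$ and the uniform boundedness (Remark~\ref{rmk:technical-default-measures}) of $N\overline{\mathcal D}_{N,y,x}$. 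Once this uniform integrability is secured, all remaining passages to the limit are routine continuous-mapping arguments analogous to those in the proof of Theorem~\ref{thm:hydro}.
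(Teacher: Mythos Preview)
Your proposal is correct and follows essentially the same route as the paper's proof in Section~\ref{sec:proof-new}: introduce the ratio $\rho_N=\mathcal N_N/N$ as an auxiliary process, establish joint tightness of $(\rho_N,\mu^{(N)})$, show that any limit point of $\rho_N$ is the deterministic solution of the ODE~\eqref{eq:ratio-limit} (via the zero-quadratic-variation argument), prove the analogue of Lemma~\ref{lemma:fundamental-convergence} with the extra factor $n_\infty$ appearing exactly where you indicate, and conclude by the uniqueness in Remark~\ref{rmk:existence-uniqueness}. The only cosmetic differences are that the paper obtains the moment bounds by direct comparison with a Poisson birth process (Lemma~\ref{lemma:basic-moment-estimates}) rather than via the Lyapunov function $V_0$, and treats the tightness of $\rho_N$ and of $\mu^{(N)}$ separately rather than through joint test functions $h(\mathfrak n(\bx)/N)\mathcal E_f(\bx)$; your Aldous--Rebolledo formulation is an equivalent packaging of the same estimates.
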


For $q \in (1, p)$, the functional $\nu \mapsto (\nu, f_1)$ is continuous in $\mathcal W_q$. This immediately implies the following about the mean bank capital distributions:

\begin{cor} We have weak convergence of mean reserves as $N \to \infty$ in $\cD[0, T]$:
$$
\mathbf{m}_N(\cdot) \Rightarrow \tm(\cdot).
$$
\end{cor}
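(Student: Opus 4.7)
The plan is to deduce the corollary as an immediate application of the continuous mapping theorem to Theorem~\ref{thm:hydro-new}. Observe that $\mathbf{m}_N(t) = (\mu^{(N)}_t, f_1)$ and $\tm(t) = (\tilde{\mu}^{(\infty)}_t, f_1)$ arise as images of the measure-valued paths under the mean functional $\Phi : \mathcal P_q \to \mathbb R$, $\Phi(\nu) := (\nu, f_1)$. It therefore suffices to show that the induced path-level map $\mathcal D([0,T],\mathcal P_q) \to \mathcal D([0,T],\mathbb R)$, $x \mapsto \Phi \circ x$, is continuous, and then invoke Theorem~\ref{thm:hydro-new}.

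First I would verify that $\Phi$ is $1$-Lipschitz on $(\mathcal P_q, \mathcal W_q)$ for every $q \ge 1$. For any coupling $(\xi',\xi'')$ of $\nu',\nu'' \in \mathcal P_q$, Jensen's inequality gives
\begin{equation*}
|\overline{\nu'} - \overline{\nu''}| = |\E[\xi' - \xi'']| \le \E|\xi' - \xi''| \le \bigl(\E|\xi' - \xi''|^q\bigr)^{1/q},
\end{equation*}
and taking the infimum over couplings yields $|\overline{\nu'} - \overline{\nu''}| \le \mathcal W_1(\nu',\nu'') \le \mathcal W_q(\nu',\nu'')$. This Lipschitz property covers the range $q \in (1,p)$ granted by Theorem~\ref{thm:hydro-new}.

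Second, I would lift $\Phi$ to the Skorohod space. If $x_n \to x$ in $\mathcal D([0,T],\mathcal P_q)$, then there exist time changes $\lambda_n$ with $\sup_t |\lambda_n(t)-t| \to 0$ and $\sup_t \mathcal W_q(x_n(\lambda_n(t)), x(t)) \to 0$. Applying the Lipschitz bound above termwise,
\begin{equation*}
\sup\limits_{t \in [0,T]} \bigl|\Phi(x_n(\lambda_n(t))) - \Phi(x(t))\bigr| \le \sup\limits_{t \in [0,T]} \mathcal W_q(x_n(\lambda_n(t)), x(t)) \longrightarrow 0,
\end{equation*}
so $\Phi \circ x_n \to \Phi \circ x$ in $\mathcal D([0,T],\mathbb R)$ with the same time changes. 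Hence $x \mapsto \Phi \circ x$ is continuous. The continuous mapping theorem for weak convergence, combined with Theorem~\ref{thm:hydro-new}, then gives $\mathbf{m}_N = \Phi \circ \mu^{(N)} \Rightarrow \Phi \circ \tilde{\mu}^{(\infty)} = \tm$ in $\mathcal D[0,T]$.

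There is essentially no obstacle: the one point that deserves comment is that Lipschitz continuity of $\Phi$ on $\mathcal W_q$ requires $q \ge 1$ (it would fail for $q < 1$, where $\mathcal W_q$ is not even a metric), so the restriction $q \in (1,p)$ from Theorem~\ref{thm:hydro-new} is exactly what is needed. Once the measure-valued convergence is in hand, the passage to the scalar mean is routine.
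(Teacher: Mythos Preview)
Your proposal is correct and follows essentially the same route as the paper: the paper simply notes (in the sentence preceding the corollary) that for $q \in (1,p)$ the functional $\nu \mapsto (\nu,f_1)$ is continuous in $\mathcal W_q$ and deduces the result immediately from Theorem~\ref{thm:hydro-new}. You supply more detail---the explicit Lipschitz bound and the lift to the Skorohod space---but the underlying argument is the same continuous mapping step.
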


\subsection{Defaults independent of size} Under Assumptions~\ref{asmp:birth-measures-new},~\ref{asmp:birth-intensities-new},~\ref{asmp:default-measures-new},~\ref{asmp:default-intensities-new}, if $\kappa_{\infty}(y, x) = \kappa_{\infty}(y)$ and $\overline{\mathcal D}_{\infty}(y, x)$ are independent of $x$,  
the diffusion part of McKean-Vlasov equation for $\tZ$ is
\begin{equation}
\label{eq:McKean-new}
\md \tZ(t) = \bigl[r -  \overline{\cD}_{\infty}(\tm(t))\kappa_{\infty}(\tm(t))\mathcal N_{\infty}(t)\bigr]\tZ(t) \,\md t  + \sigma \tZ(t)\,\md B(t),
\end{equation}
with $\tZ$ is killed with rate $\mathcal N_{\infty}(t)\lambda_{\infty}(\tm(t))$, and resurrected according to the probability measure $\mathcal B_{\infty, \tm(t)}$. As before, only the first component in the jump measure~\eqref{eq:jump-measures-new} remains because $\kappa_\infty(y,x)$ does not depend on $x$. To solve \eqref{eq:McKean-new}
we first compute $\tm(t)$. Taking expectations, we obtain
\begin{align}
\label{eq:ODE-2}
\left\{ \begin{aligned}
\mathcal N'_{\infty}(t) & = \lambda_{\infty}(\tm(t)) - \mathcal N_{\infty}(t)\kappa_{\infty}(\tm(t)),
\\
\tm'(t) & = \left[r - \overline{\cD}_{\infty}(\tm(t))\kappa_{\infty}(\tm(t))\mathcal N_{\infty}(t)\right]\tm(t) +  \frac{\lambda_{\infty}(\tm(t))}{\mathcal N_{\infty}(t)}\left(\overline{\mathcal B}_\infty(\tm(t)) - \tm(t)\right).
\end{aligned} \right.
\end{align}
Assume this (deterministic) system~\eqref{eq:ODE-2} of ODEs has a unique solution $(\mathcal N_{\infty}, \tm)$ with the initial condition
$$
\tm(0) = \int_0^{\infty}\! x\mu^{(\infty)}_0(\md x),\qquad \mathcal N_{\infty}(0) = 1.
$$
Plug this in~\eqref{eq:McKean} to get that $\tZ$
is a geometric Brownian motion with time-dependent drift:
\begin{equation}
\label{eq:GBM-new}
\tZ(t) = \tZ(0)\exp\left[\int_0^t\left[r -  \overline{\cD}_{\infty}(\tm(s))\kappa_{\infty}(\tm(s))\mathcal N_{\infty}(s) - \sigma^2/2\right]\,\md s + \sigma B(t)\right],
\end{equation}
killed at rate $\lambda_{\infty}(\tm(t))$, and resurrected according to $\mathcal B_{\infty, \tm(t)}$. We revisit the case when all limiting parameters are constant. Then the system of differential equations~\eqref{eq:ODE-2} takes the form
\begin{align}
\label{eq:ODE-2-const}
\begin{split}
\mathcal N'_{\infty}(t) & = \lambda_{\infty} - \mathcal N_{\infty}(t)\kappa_{\infty},
\\
\tm'(t) & = r\tm(t) - \overline{\cD}_{\infty}\kappa_{\infty}\mathcal N_{\infty}(t) \tm(t) +{\lambda_{\infty}}{\mathcal N_{\infty}^{-1}(t)}\left(\overline{\mathcal B}_{\infty} - \tm(t)\right).
\end{split}
\end{align}
The first equation in~\eqref{eq:ODE-2-const} starting at $\mathcal N_{\infty}(0) = 1$ is solved as
$$
\mathcal N_{\infty}(t) = \frac{\lambda_{\infty}}{\kappa_{\infty}} - \left[\frac{\lambda_{\infty}}{\kappa_{\infty}} - 1\right]\exp\left(-\kappa_{\infty}t\right).
$$
The second equation of \eqref{eq:ODE-2-const}, which is also linear, can similarly be solved explicitly. As $t \to \infty$, $\mathcal N_{\infty}(t) \to \mathcal N_{\infty}(\infty) := \lambda_{\infty}/\kappa_{\infty}$. Therefore, we can find the long-term limit of $\tm(t)$ by plugging $\mathcal N_{\infty}(\infty)$ instead of $\mathcal N_{\infty}(t)$ into~\eqref{eq:ODE-2-const} and letting the right-hand side be equal to zero. This gives
$$
\tm(t) \to \frac{\kappa_{\infty}\overline{\mathcal B}_{\infty}}{\overline{\mathcal D}_{\infty}\lambda_{\infty} + \kappa_{\infty} - r},\qquad \mbox{as}\ t \to \infty.
$$
The left panel of Figure~\ref{fig:conv-N} illustrates such convergence to the mean field limit. We take $\lambda_N(n,s) = 0.2N$, $\kappa_N(n,s,x) = 0.1$, $\mathcal{B}_{n,s} \sim \Exp(1)$ and $\mathcal{D}_{N,n,s,x} \sim \Uni(0, 1/N)$, and $r=0.05$. Note that in the mean field limit $\lambda_\infty = 0.2$, $\kappa_\infty(x) = 0.1$, and $\overline{\mathcal{B}}_\infty = 1, \overline{\mathcal{D}}_\infty = 0.5$. Therefore,
$$
\lim_{t \to \infty} \tm(t) = \frac{0.1\cdot1}{0.5\cdot0.2 + 0.1 - 0.05} = \frac23.
$$
In Figure~\ref{fig:conv-N}, we initialize with $\mu_0^{(N)} \sim \Exp(2)$, so that $\tm(0) = 0.5$. We see that the solution $\tm(t)$ converges to its limiting value more slowly than in Figure~\ref{fig:conv}, which is not surprising since the ODE~\eqref{eq:ODE-2-const} contains $\mathcal N_{\infty}$, which is also not constant. Moreover, the more complicated ODE governing the evolution of $\tm$ leads to $t \mapsto \tm(t)$ being non-monotone in this particular setup.

\smallskip

Note the difference to the model in Section~\ref{sec:mean-field}. There, $\mathcal N(t)$ did not have a stationary distribution, since at level $\mathcal N(t) = n$, the birth rate was $0.2n$, larger than the total default rate $0.1 n$. As a result, $\mathcal N(t)$ was growing exponentially in $t$. In the present Section, the birth rate is $0.2N$ (constant with respect to $n$) and the death rate is $0.1 n$, so that $\mathcal N_N(t)$ is a constant-birth, linear-death process which has a stationary distribution of $N_\infty \sim \Poi (2N)$. Comparing to $\mathcal N_N(0) = N$, the relative ratio of $\E[ N_\infty]/\mathcal N_N(0) = 2$ matches the limit $\mathcal{N}_\infty(\infty) = \lambda_\infty/\kappa_\infty$. Similarly to Theorem~\ref{thm:individual} we have a propagation of chaos based on Theorem~\ref{thm:hydro-new}.

\begin{figure}[t]
\begin{center}
\includegraphics[height=3in,width=3in]{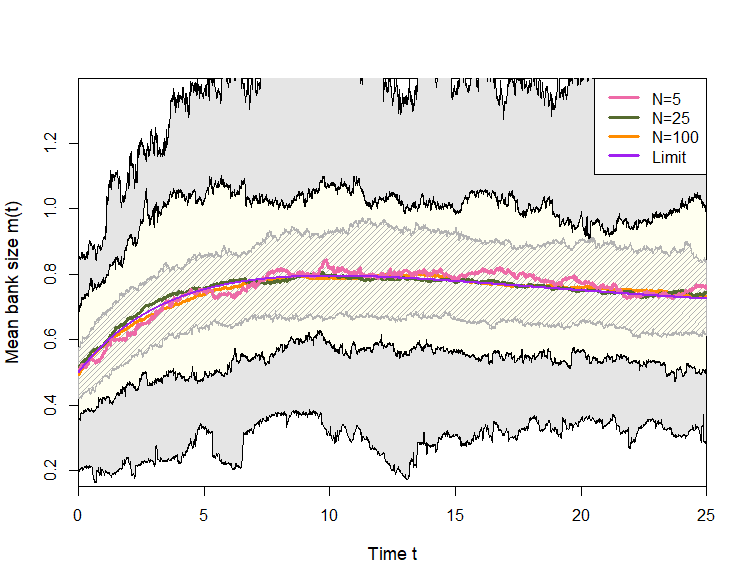}
\includegraphics[height=3in,width=3in]{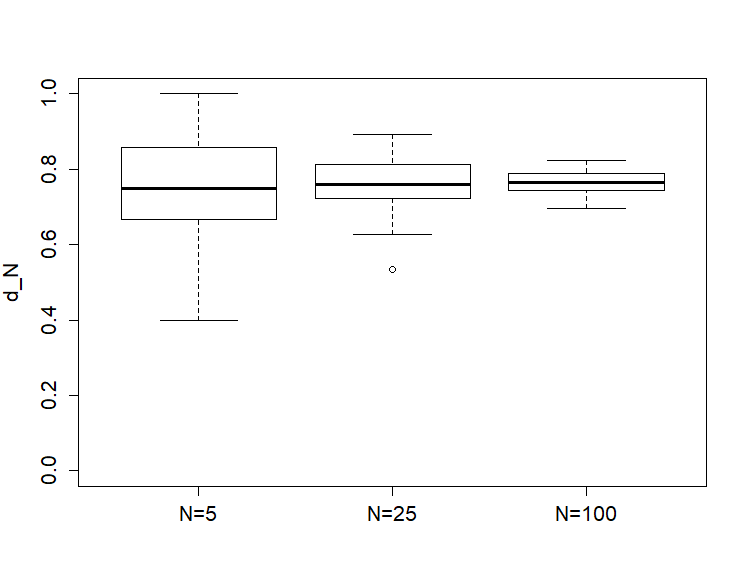}
\end{center}
\caption{Left panel: distribution of $m_N(t)$ on $t \in [0,25]$ and $N=5,25,100$ based on $100$ simulated trajectories of $X^{(N)}$. The initial distribution is $\mu^{(N)}_0 \sim \Exp(2)$, so $m_N(0) = 0.5$. The deterministic limit $\tm(\cdot)$ is also shown. Right panel: distribution of $d_N := (\mu^{(N)}_T, 1_{[0,D]})$, proportion of banks with reserves less than $D=1$ at $T=25$.}
\label{fig:conv-N}
\end{figure}

\begin{cor} We work under Assumptions~\ref{asmp:birth-measures-new},~\ref{asmp:birth-intensities-new},~\ref{asmp:default-measures-new},~\ref{asmp:default-intensities-new}. Assume $X_1^{(N)}(0)$ is deterministic for every $N \ge 1$, and $X_1^{(N)}(0) \to x_1$ as $N \to \infty$. As $N \to \infty$, $X_1^{(N)} \Rightarrow X^{(\infty)}_1$ weakly in $\cD[0, T]$, where $X^{(\infty)}_1$ is a solution to
\begin{align}\label{eq:x-infty-sde-2}
\md X^{(\infty)}_1(t) = \tilde{\psi}\bigl(t, X^{(\infty)}_1(t), \tm(t)\bigr)X^{(\infty)}_1(t)\,\md t + \sigma X^{(\infty)}_1(t)\,\md W(t),
\end{align}
starting from $x_1$, killed with rate $\kappa_{\infty}(\tm(t),X^{(\infty)}_1(t))$.
\label{thm:individual-new}
\end{cor}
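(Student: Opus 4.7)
The plan is to mimic the proof of Theorem~\ref{thm:individual}, adapted to the initial-size scaling of Assumptions~\ref{asmp:birth-measures-new}--\ref{asmp:default-intensities-new}. The crucial input is Theorem~\ref{thm:hydro-new}: $\mu^{(N)}\Rightarrow\tilde\mu^{(\infty)}$ in $\cD([0,T],\mathcal P_q)$. Via the Skorohod representation theorem we promote this to almost sure convergence on a common probability space, which yields $\mathbf m_N(\cdot)\to\tm(\cdot)$ uniformly on $[0,T]$ and, through the balance ODE~\eqref{eq:ratio-limit}, also $\mathcal N_N(\cdot)/N\to\mathcal N_\infty(\cdot)$ uniformly.

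For the tagged bank $1$, on $\{t<\tau^{(N)}_1\}$ the construction of Section~\ref{sec:construction} gives a semi-martingale decomposition: $X^{(N)}_1$ follows geometric Brownian motion with drift $r$ and volatility $\sigma$ between jumps, and drops by $X^{(N)}_1(s-)\xi^{(N)}_{1,i}$ at each default time $s$ of a bank $i\ne 1$, where $\xi^{(N)}_{1,i}\sim\mathcal D_{N,\mathbf m_N(s-),X^{(N)}_i(s-)}$. Compensating the pure-jump term yields the predictable drift
\begin{equation*}
-\int_0^t X^{(N)}_1(s)\sum_{i\in I(s),\,i\ne 1}\kappa_N(\mathbf m_N(s),X^{(N)}_i(s))\,\overline{\mathcal D}_N(\mathbf m_N(s),X^{(N)}_i(s))\,\md s
\end{equation*}
plus a purely discontinuous local martingale whose predictable quadratic variation is $O(1/N)$: under Assumption~\ref{asmp:default-measures-new} each jump has $L^p$-norm of order $1/N$ while the aggregate jump rate is $O(N)$. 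Rewriting the internal sum as $\mathcal N_N(s)\,(\mu^{(N)}_s,\kappa_N\overline{\mathcal D}_N)$ up to a single $O(1/N)$ term, and invoking the a.s.~uniform convergence of the environment together with the Wasserstein-$q$ continuity of $\nu\mapsto(\nu,\kappa_\infty\overline{\mathcal D}_\infty)$, the compensator converges uniformly on $[0,T]$ to the drift $\tilde\psi(s,X^{(\infty)}_1(s),\tm(s))\,X^{(\infty)}_1(s)$ from~\eqref{eq:new-drift-coefficient}, while the compensated martingale vanishes in $L^2$. Tightness of $\{X^{(N)}_1\}$ in $\cD([0,T],\mathbb R_+)$ follows from Aldous's criterion together with the uniform $p$-th moment bounds and uniform boundedness of $\kappa_N$ and $N\overline{\mathcal D}_N$; any subsequential limit satisfies~\eqref{eq:x-infty-sde-2}, which admits a unique solution once the deterministic inputs $\tm(\cdot)$ and $\mathcal N_\infty(\cdot)$ are fixed. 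Applying this argument jointly to $(X^{(N)}_1,\ldots,X^{(N)}_k)$ yields propagation of chaos, since the remaining $N-k$ particles still drive the environment and the tagged particles couple in the limit only through the deterministic $\tm,\mathcal N_\infty$.

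The main obstacle is the careful handling of the killing of $X^{(N)}_1$. I would append a cemetery state $\Delta$ and realize the kill time $\tau^{(N)}_1$ via the integrated intensity $A^{(N)}(t):=\int_0^t\kappa_N(\mathbf m_N(s),X^{(N)}_1(s))\,\md s$ against an independent unit-rate exponential clock shared across $N$. Uniform convergence $\kappa_N\to\kappa_\infty$ (Assumption~\ref{asmp:default-intensities-new}), together with the already-established uniform convergence of $X^{(N)}_1$ and $\mathbf m_N$ on the unkilled portion, gives $A^{(N)}\to A^{(\infty)}:=\int_0^\cdot\kappa_\infty(\tm(s),X^{(\infty)}_1(s))\,\md s$ uniformly, and hence convergence of the kill times and of the killed paths in the Skorohod topology on $(0,\infty)\cup\{\Delta\}$. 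The time-inhomogeneity through $\mathcal N_\infty(\cdot)$ introduces no additional difficulty since $\mathcal N_\infty$ is a deterministic, Lipschitz-in-$\mathcal W_p$ functional of $\tilde\mu^{(\infty)}$ (Remark~\ref{rmk:existence-uniqueness}), so all coefficients in the limiting SDE are well-defined continuous functions of the environment.
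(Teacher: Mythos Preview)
Your proposal is essentially correct, but it proceeds along a somewhat different line than the paper. The paper gives no standalone proof of this corollary; it simply says ``similarly to Theorem~\ref{thm:individual}'' and relies on Theorem~\ref{thm:hydro-new}. That adaptation would run through the generator/martingale-problem machinery of Section~\ref{sec:theorem42}: apply $\mathfrak L$ to the function $\varphi_f(\mathbf x)=f(x_1)$ for $f\in\mathcal C^2_b$, obtain the analogue of~\eqref{eq:generator-on-x-1}--\eqref{eq:Ito-chaos}, prove the analogues of Lemmata~\ref{lemma:convergence-x-1}--\ref{lemma:mgle-tight} (with the extra input $\mathcal N_N/N\to\mathcal N_\infty$ from Lemma~\ref{lemma:normalized-counter}), establish tightness via the compact-containment condition and \cite[Proposition~3.9.1]{EthierKurtz}, and identify the limit by uniqueness of the martingale problem for the killed diffusion with generator $\mathcal G_{\mathcal N_\infty(t),\tm(t)}-\kappa_\infty(\tm(t),\cdot)$.

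You instead work with the semimartingale decomposition of $X_1^{(N)}$ itself: compensate the default-driven downward jumps, show the compensated jump martingale vanishes in $L^2$ via the $O(N)\cdot O(N^{-2})$ bound on its predictable quadratic variation, get tightness from Aldous' criterion, and treat the killing separately by coupling the integrated hazard to a common exponential clock. This is a legitimate alternative. The paper's route has the advantage that the killing term $-\kappa_\infty f$ sits naturally inside the generator (no separate kill-time convergence argument is needed), and working with bounded test functions sidesteps the integrability issues you must address by invoking uniform $p$th-moment bounds on $X_1^{(N)}$. Your route, on the other hand, is more concrete about the mechanism (the compensator is visibly the averaged default drag) and makes the role of $\mathcal N_N/N\to\mathcal N_\infty$ completely explicit. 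One small point to tighten: your compensator is $\mathcal N_N(s)\,(\mu^{(N)}_s,\kappa_N\overline{\mathcal D}_N)$, which in the limit yields $\mathcal N_\infty(s)\,(\tilde\mu^{(\infty)}_s,\kappa_\infty(\tm(s),\cdot)\overline{\mathcal D}_\infty(\tm(s),\cdot))$ rather than the pointwise $\kappa_\infty(\tm(s),X_1^{(\infty)}(s))\overline{\mathcal D}_\infty(\tm(s),X_1^{(\infty)}(s))$ appearing in $\tilde\psi$; the two agree only when $\kappa_\infty\overline{\mathcal D}_\infty$ is independent of the defaulting bank's size, so be careful to match the exact form of the limiting drift to what your computation actually produces.
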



\section{Proofs for Sections 2 and 3}\label{app:technical}

We start with the following three technical lemmata and their proofs. 

\begin{lemma}
If $\nu_n \to \nu_0$ in $\mathcal W_p$ for some $p \ge 1$, then  $(\nu_n, f) \to (\nu_0, f)$ for functions $f : \mathbb R_+ \to \mathbb R$ with the following property:
$$
C_f := \sup_{x \ge 1}\left[x^{-q}|f(x)|\right] < \infty\quad \mbox{for some}\quad 0 < q < p.
$$
In particular, the functional of taking the mean $\nu \mapsto \overline{\nu}$ is continuous in $\mathcal W_p$. 
\label{lemma:technical-convergence}
\end{lemma}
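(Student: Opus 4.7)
The plan is to combine the two ingredients that characterize $\mathcal W_p$ convergence—weak convergence of the measures plus convergence of the $p$-th moments (as explicitly stated in Section 2 of the paper)—with a standard truncate-and-control-the-tail argument. Since $\nu_n \to \nu_0$ in $\mathcal W_p$, the sequence $M_n := (\nu_n, f_p) = \int x^p\,\nu_n(\md x)$ is convergent and hence bounded by some $M^* < \infty$ uniformly in $n \ge 0$. This uniform $p$-moment bound is what will upgrade plain weak convergence to convergence against test functions of polynomial growth strictly slower than $p$, which is the content of the lemma.

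First I would fix $R > 1$ and introduce a continuous cutoff $\chi_R : \mathbb R_+ \to [0,1]$ with $\chi_R \equiv 1$ on $[0, R]$ and $\chi_R \equiv 0$ on $[R+1, \infty)$. Assuming $f$ continuous (this is the implicit regularity under which the lemma is intended, and it covers the application to $f = f_1$), the product $f\chi_R$ is bounded and continuous on $\mathbb R_+$, so the weak convergence $\nu_n \Rightarrow \nu_0$ gives
\begin{equation*}
(\nu_n, f\chi_R) \;\longrightarrow\; (\nu_0, f\chi_R), \qquad n \to \infty.
\end{equation*}
For the tails, the hypothesis $|f(x)| \le C_f x^q$ on $\{x \ge 1\}$ together with the crude bound $x^q \le R^{q-p} x^p$ on $\{x \ge R\}$ (valid because $q < p$) yields
\begin{equation*}
\bigl| (\nu_n, f(1-\chi_R)) \bigr| \;\le\; \int_R^{\infty}|f(x)|\,\nu_n(\md x) \;\le\; C_f R^{q-p}\, M^*,
\end{equation*}
and the same bound holds for $\nu_0$.

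Combining the two estimates and taking $\limsup_{n \to \infty}$ in the triangle inequality gives $\limsup_n |(\nu_n, f)-(\nu_0, f)| \le 2 C_f M^* R^{q-p}$; since $q < p$, sending $R \to \infty$ closes the argument. The claim about the mean functional is then just the specialization to $f = f_1$, which satisfies the growth condition with $q = 1$ whenever $p > 1$ (for $p = 1$ the continuity of $\nu \mapsto \overline{\nu}$ is immediate from Kantorovich duality applied to the $1$-Lipschitz function $x \mapsto x$). The only subtle step is really the uniform tail control, and this is handled automatically once one has the $p$-moment convergence baked into the definition of $\mathcal W_p$; the main obstacle, if any, is the implicit continuity assumption on $f$, which one could dispense with by invoking $\nu_0$-a.e.~continuity at the price of extra bookkeeping but which is harmless for every use of the lemma in the paper.
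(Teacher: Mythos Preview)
Your argument is correct and reaches the same conclusion, but the paper takes a slightly different route. Instead of truncating $f$ at a level $R$ and controlling the tail via Markov's inequality, the paper passes through the Skorohod representation theorem: it realizes $\xi_n \sim \nu_n$ on a common probability space with $\xi_n \to \xi_0$ a.s., observes that the growth bound $|f(x)| \le C_f x^q$ gives $\sup_n \E[|f(\xi_n)|^{p/q}] < \infty$, and concludes by uniform integrability that $\E f(\xi_n) \to \E f(\xi_0)$. Both proofs rest on exactly the same two inputs (weak convergence plus the uniform $p$-th moment bound coming from $\mathcal W_p$ convergence), and both implicitly use continuity of $f$---you for the weak convergence against $f\chi_R$, the paper for the a.s.\ convergence $f(\xi_n) \to f(\xi_0)$. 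Your truncation approach is more elementary in that it avoids Skorohod's theorem; the paper's is a bit more compact once uniform integrability is invoked. Either is entirely adequate here.
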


\begin{lemma} For $0 < q < p$ and any $C> 0$, the set $\{\nu \in \mathcal P_p \, :\,  (\nu, f_p) \le C\}$ is precompact in $\mathcal P_q$.
\label{lemma:precompact}
\end{lemma}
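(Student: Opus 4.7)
The plan is to combine Prokhorov's theorem with a uniform integrability argument, exploiting the fact (noted earlier in the paper) that convergence in $\mathcal W_q$ is equivalent to weak convergence together with convergence of the $q$-th moments.

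First, I would establish tightness of the family $\mathcal F_C := \{\nu \in \mathcal P_p \, :\, (\nu, f_p) \le C\}$. By Markov's inequality, for any $\nu \in \mathcal F_C$ and $R > 0$,
\[
\nu\bigl([R, \infty)\bigr) \le R^{-p} (\nu, f_p) \le CR^{-p},
\]
which tends to $0$ as $R \to \infty$ uniformly in $\nu \in \mathcal F_C$. Hence $\mathcal F_C$ is tight, and by Prokhorov's theorem every sequence $(\nu_n) \subset \mathcal F_C$ admits a subsequence $(\nu_{n_k})$ converging weakly to some probability measure $\nu_0$ on $\mathbb R_+$.

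Next, I would upgrade this weak convergence to convergence in $\mathcal W_q$. Fatou's lemma (applied to a continuity set truncation) shows $(\nu_0, f_p) \le \liminf_k (\nu_{n_k}, f_p) \le C$, so $\nu_0 \in \mathcal F_C \subseteq \mathcal P_p \subseteq \mathcal P_q$. For $q < p$ and $R \ge 1$,
\[
\int_{[R, \infty)} x^q \, \nu_{n_k}(\md x) \le R^{q-p}\int_{[R, \infty)} x^p \,\nu_{n_k}(\md x) \le CR^{q-p},
\]
which vanishes as $R \to \infty$ uniformly in $k$. Thus the family $\{x^q\}$ is uniformly integrable along the sequence $(\nu_{n_k})$, and combined with weak convergence this yields $(\nu_{n_k}, f_q) \to (\nu_0, f_q)$. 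Invoking the equivalence (stated after \eqref{eq:Wasserstein}) between $\mathcal W_q$-convergence and weak convergence plus convergence of $q$-th moments gives $\mathcal W_q(\nu_{n_k}, \nu_0) \to 0$. Since every sequence in $\mathcal F_C$ has a $\mathcal W_q$-convergent subsequence, $\mathcal F_C$ is precompact in $\mathcal P_q$.

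The only subtlety is that for $q \in (0,1)$, $\mathcal W_q$ is not a metric but only generates a topology; nevertheless the paper's characterization of convergence in this topology (weak convergence plus $q$-th moment convergence) still applies, so the argument goes through unchanged. The main work is really the moment truncation estimate above, which is a direct consequence of $q < p$.
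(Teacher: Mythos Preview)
Your argument is correct and follows essentially the same route as the paper: tightness from the uniform $p$-th moment bound, extraction of a weakly convergent subsequence, and then uniform integrability of $f_q$ (since $q<p$) to upgrade to $\mathcal W_q$-convergence. The only cosmetic difference is that the paper passes through the Skorohod representation theorem to realize the subsequence as a.s.\ convergent random variables before invoking uniform integrability, whereas you work directly at the level of measures; both are standard and equivalent here.
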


\begin{lemma} For every $\varepsilon \in (0, 1)$,  there exists a constant $C_{\varepsilon}$ such  that for every function $f \in \mathcal C^2_b$, and for $x > 0$, $z \in (0, 1 - \varepsilon)$, we have:
$$
\left|f(x(1-z)) - f(x) - zD_1f(x) \right| \le C_{\varepsilon}z^2\triplenorm{f}.
$$
\label{lemma:Taylor}
\end{lemma}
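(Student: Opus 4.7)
The plan is to apply the second-order Taylor expansion of $f$ around $x$, evaluated at the perturbed point $x(1-z) = x - xz$, and then convert the pointwise bound on $f''$ into a bound expressed in terms of $D_2 f$, which is what appears in $\triplenorm{f}$. The lower bound $z < 1-\varepsilon$ is exactly what guarantees that the Taylor expansion stays in a region where $x$ and the remainder point $\xi$ are comparable, and this is where the $\varepsilon$-dependence of $C_\varepsilon$ enters.

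Concretely, I would first write, by Taylor's theorem with Lagrange remainder (valid since $f \in \mathcal C^2$ and the segment $[x(1-z), x] \subset (0,\infty)$),
\[
f(x(1-z)) - f(x) = (-xz)\,f'(x) + \tfrac{1}{2}(xz)^2 f''(\xi)
\]
for some $\xi = \xi(x,z) \in (x(1-z),\, x)$. The first term equals $-z\,D_1 f(x)$ by the definition $D_1 f(x) = x f'(x)$ in~\eqref{eq:operators}, so the object to control is the remainder $\tfrac{1}{2}(xz)^2 f''(\xi)$. (I note that the statement as written has $-zD_1f$; the natural Taylor identity pairs $f(x(1-z))-f(x)$ with $+zD_1 f(x)$, matching the sign convention $f(x+h)-f(x) \approx h f'(x)$ with $h=-xz$. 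I will phrase the estimate on this quantity and treat the sign in the statement as a typographical convention.)

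Second, I would rewrite $f''(\xi)$ in terms of the operator $D_2$: since $D_2 f(\xi) = \xi^2 f''(\xi)$, one has $f''(\xi) = \xi^{-2} D_2 f(\xi)$, so
\[
\tfrac{1}{2}(xz)^2 f''(\xi) = \tfrac{1}{2}\,z^2\,\bigl(x/\xi\bigr)^2\, D_2 f(\xi).
\]
Here is the only place where the restriction $z \in (0, 1-\varepsilon)$ is used: since $\xi > x(1-z) > x\varepsilon$, one has $x/\xi < \varepsilon^{-1}$, so $(x/\xi)^2 < \varepsilon^{-2}$ uniformly in $x>0$ and in admissible $z$.

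Third, combining these steps and using $|D_2 f(\xi)| \le \|D_2 f\| \le \triplenorm{f}$, I obtain
\[
\bigl|f(x(1-z)) - f(x) + z\,D_1 f(x)\bigr| \;\le\; \frac{z^2}{2\varepsilon^2}\,\|D_2 f\| \;\le\; C_\varepsilon\, z^2\, \triplenorm{f},
\]
with the explicit choice $C_\varepsilon := 1/(2\varepsilon^2)$. There is no real obstacle here; the only subtle point is recognizing that one cannot bound $f''$ by $\|f''\|$ (it is not part of $\triplenorm{f}$), so the trick is the conversion $f''(\xi) = D_2 f(\xi)/\xi^2$ together with the comparability $\xi \asymp x$ enforced by $z \le 1-\varepsilon$. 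This explains why the constant must blow up as $\varepsilon \downarrow 0$: when $z$ is allowed close to $1$, the point $x(1-z)$ approaches $0$, and a pointwise bound on $f''$ derived from a uniform bound on $x^2 f''(x)$ degenerates.
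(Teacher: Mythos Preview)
Your proof is correct and, in fact, slightly cleaner than the paper's. You are also right about the sign: the statement should read $+\,zD_1f(x)$ (equivalently, bound $|f(x(1-z))-f(x)+zD_1f(x)|$), as is clear from how the lemma is applied in~\eqref{eq:first-904}.

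The paper takes a different route: it substitutes $g(u):=f(e^{u})$, so that $g'=(D_1f)\circ\exp$ and $g''=(D_1f+D_2f)\circ\exp$, and applies the ordinary second-order Taylor formula to $g$ at $\ln x$ with increment $\ln(1-z)$. This produces two error terms, $\tfrac12\ln^2(1-z)\,g''(y)$ and $(\ln(1-z)+z)\,g'(\ln x)$, each bounded by $C_\varepsilon z^2\triplenorm{f}$ using that $\ln^2(1-z)\le C_\varepsilon z^2$ and $|\ln(1-z)+z|\le C_\varepsilon z^2$ on $[0,1-\varepsilon]$. Your approach instead applies Taylor directly to $f$ with increment $-xz$ and converts $f''(\xi)=\xi^{-2}D_2f(\xi)$, using $\xi>x(1-z)>x\varepsilon$ to control the ratio $(x/\xi)^2$. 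This is more direct, gives the explicit constant $C_\varepsilon=1/(2\varepsilon^2)$, and uses only $\|D_2f\|$ rather than both $\|D_1f\|$ and $\|D_2f\|$. The paper's logarithmic change of variables is the natural one given that $D_1,D_2$ are the multiplicative derivatives, but your argument shows the substitution is not needed.
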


\subsection{Proof of Lemma~\ref{lemma:technical-convergence}} Let us take a sequence $(\xi_n)$ of random variables: $\xi_n \sim \nu_n$ for every $n$. By the Skorohod representation theorem, we can assume $\xi_n \to \xi_0$ a.s., as $n \to \infty$. We also have $\E \lvert \xi_n \rvert ^p \to \E \lvert \xi_0 \rvert ^p$, as $\nu_{n} \to \nu_{0}$ in the Wasserstein metric $\mathcal W_{p}$. Since $|f(x)|^{p/q} \le C_f^{p/q}x^{p}$ for $f \in \mathcal H_q$, we get $\sup_{n \ge 1}\E\left[ \lvert f(\xi_n) \rvert ^{p/q} \right] < \infty$. Thus for $p > q$ the family $(f(\xi_n))_{n \ge 1}$ is uniformly integrable, and hence, $(\nu_n, f) = \E f(\xi_n) \to \E f(\xi_0) = (\nu_0, f)$ as $n \to \infty$.

\subsection{Proof of Lemma~\ref{lemma:precompact}} Take a sequence $(\nu_n)_{n \ge 1}$ of measures in $\{ \nu \in \mathcal P_{p} : ( \nu, f_{p}) \le C\}$, and generate random variables $\xi_n \sim \nu_n$. Since $ \sup_{n \ge 1}\E \lvert \xi_n \rvert ^p \le C < \infty,$ the sequence $(\xi_n)_{n \ge 1}$ is tight. Extract a weakly convergent subsequence; without loss of generality, we assume this sequence itself converges weakly to some random variable $\xi_0$. By the Skorohod representation theorem, we can assume $\xi_n \to \xi_0$ a.s. Moreover, the sequence $( \lvert \xi_n \rvert^q)_{n \ge 1}$ is uniformly integrable for $q < p$, since $\E[( \lvert \xi_n \rvert^q)^{p/q}] \le C$. Therefore, $\E \lvert \xi_n \rvert ^q \to \E \lvert \xi_0 \rvert ^q$, and hence, $\xi_n \to \xi_0$ in  $\mathcal W_q$. 

\subsection{Proof of Lemma~\ref{lemma:Taylor}} Take a function $g(x) := f(e^x)$. Then
$$
g'(x) = e^xf'(e^x) = (D_1f)(e^x),\quad g''(x) = e^xf'(e^x) + e^{2x}f''(e^x) = (D_1f + D_2f)(e^x).
$$
We can rewrite it for some $y \in [\ln x + \ln(1 - z), \ln x]$:
\begin{align*}
f(x(1-z)) & - f(x) - zD_1f(x) \\ & = g(\ln x + \ln(1-z)) - g(\ln x) - zg'(\ln x) \\ & = g(\ln x + \ln(1-z)) - g(\ln x) + \ln(1-z)g'(\ln x) - (\ln(1-z) + z)g'(\ln x) \\ & =
\frac12\ln^2(1-z)g''(y) - (\ln(1-z) + z)g'(\ln x).
\end{align*}
There exists a $C_{\varepsilon}$ such that for $z \in [0, 1 - \varepsilon]$,
$$
\frac12\ln^2(1 - z) \le C_{\varepsilon}z^2,\quad \ln(1-z) + z \le C_{\varepsilon}z^2.
$$
It suffices to note that for all $y \in \mathbb R$, we have: $|g''(y)| \le \triplenorm{f}$; and for all $x > 0$, we have: $|g'(\ln x)| \le \triplenorm{f}$. This completes the proof.

\subsection{Proof of Lemma~\ref{lemma:totally-irreducible}}\label{sec:Lemma31}
We need to establish~\eqref{eq:totally-irreducible}, i.e.,~$P^t(\bx,A) > 0 \ \forall t  > 0$. Assume first that $A \subseteq (0, \infty)^N$ for $N = \mathfrak n(\bx) \ge 1$; that is, the target set $A$ lies on the same level as the initial point $\mathbf{x}$. Observe that the intensities of births and defaults of banks are locally bounded on $(0, \infty)^N$ as long as there are $N$ banks in the system; therefore, with positive probability there are $N$ banks at every time $s \in [0, t]$, and the process $X$ behaves as the solution to a certain stochastic differential equation on $(0, \infty)^N$ with a nonsingular covariance matrix. But such processes have the positivity property.

\smallskip

If $N = \mathfrak n(\bx) = 0$, and $A = \{\varnothing\}$, then $P^t(\mathbf{x}, A) = e^{-\lambda_0t} > 0$: This is the probability that, starting with an empty system, no banks emerged during time $[0, t]$.

\smallskip

Assume now that $A \subseteq (0, \infty)^M$ for $M \ne N = \mathfrak n(\bx) \ge 1$. Then with positive probability we have: $\mathfrak n(X(s)) = M$ for some $s \in [0, t)$, since
the rates of birth and default are everywhere positive. Let $\tau'$ be the first moment of hitting level $M$:
$$
\tau' := \inf\{s \ge 0\mid \mathfrak n(X(s)) = M\}.
$$
Observe that the integral of a positive function over a set of positive measure is positive. Applying this and conditioning on $\tau'$ and $X(\tau')$, by the Markov property of $X$ we get:
$$
\PP(X(t) \in A) = \int_0^t\PP(\tau' \in \md s,\, X(\tau') \in \mathrm{d} \mathbf{y})\, \PP\left(X(t-s) \in A\mid X(0) = \mathbf{y}\right) > 0.
$$
This completes the proof of~\eqref{eq:totally-irreducible} for subsets $A$ which are on one level of $\mathcal X$. Any general set $A \subseteq \mathcal X$ can be split into its subsets, at least one of which has positive Lebesgue measure.

\subsection{Proof of Theorem~\ref{thm:conservative}} \label{sec:proofofconserva}
The statement of Theorem~\ref{thm:conservative} then follows from Lemmata~\ref{lemma:totally-irreducible},~\ref{lemma:Feller}, and the classic results of \cite{MT1993a, MT1993b}, together with \cite[Proposition 2.2, Lemma 2.3]{MyOwn10}. In fact, since the last two terms in \eqref{eq:Lyapunov-result} are non-positive, the condition in Theorem~\ref{thm:conservative} is effectively about the term due to births
$\lambda_{\mathfrak n(\bx)}(\mathfrak{s}(\bx))\left[\overline{\mathcal B}(\mathfrak n(\bx), \mathfrak{s}(\bx)) + 1\right], $
growing at most linearly in $\mathfrak{n}(x)$ and $ \mathfrak{s}(x)$.

\subsection{Proof of Theorem~\ref{thm:stability}} Without loss of generality, assume $c_1 = c_2 = c$ by taking the smaller one among $c_1$ and $c_2$. Compare~\eqref{eq:Lyapunov-result} with~\eqref{eq:1205} and observe that for every $\varepsilon \in (0, 1)$
$$
\mathfrak{L} V_0(\bx) = \varphi(\bx) \le -cV_0(\bx) + c_3 \le - (1-\varepsilon) cV_0(\bx) +  c_3 \cdot 1_{\mathcal K}(\bx),\ \mathbf{x} \in \mathcal X,
$$
where $\mathcal K :=\{ \bx \in \mathcal X : V_{0} (\bx) \le c_{3}/(\varepsilon c) \} $ is a compact subset of $\mathcal X$. The bound \eqref{eq:exp-ergodic-V} follows from Lemmata~\ref{lemma:totally-irreducible},~\ref{lemma:Feller}, and from the theory of Lyapunov functions, e.g. \cite{MT1993a, MT1993b, MyOwn10}

\subsection{Proof of Theorem~\ref{thm:explicit-rate}} Take two copies $X^{(1)} = (X^{(1)}(t),\, t \ge 0)$ and $X^{(2)} = (X^{(2)}(t),\, t \ge 0)$ of this process, starting from $X^{(1)}(0) = \bx_1$ and $X^{(2)}(0) = \bx_2$. The idea is to couple them when the dimension-counting processes $N^{(1)}(\cdot) = \mathfrak{n}(X^{(1)}(\cdot))$ and $N^{(2)}(\cdot) = \mathfrak{n}(X^{(2)}(\cdot))$  meet at $0$. Then the original processes $X^{(1)}$ and $X^{(2)}$ meet at $\varnothing$. Define this coupling time $\tau$:
\begin{equation}
\label{eq:coupling-time}
\tau := \inf\{t \ge 0\mid N^{(1)}(t) = N^{(2)}(t) = 0\}.
\end{equation}
By classic Lindvall's inequality, the total variation distance from~\eqref{eq:TV} between $P^t(\bx_1, \cdot)$ and $P^t(\bx_2, \cdot)$ is less than or equal to $2\,\mathbb P(\tau \ge t)$. Next, compare these dimension-counting processes $N^{(i)}$ with birth-death processes: $ N^{(i)}(t) = \mathfrak n(X^{(i)}(t)) \le \hat{N}^{(i)}(t)$,
$ i = 1, 2$, $\, t \ge 0$, where $\hat{N}^{(i)} = (\hat{N}^{(i)}(t),\, t \ge 0)$ is a birth-death process with birth intensity $\lambda^*_n$ and death intensity $n\kappa^*_n$ at site $n \in \mathbb Z_+$, starting from $\hat{N}^{(i)}(0) := N^{(i)}(0)$, $i = 1, 2$. Similarly to \cite{LMT1996, MyOwn12}, we find that the moment $\tau$ satisfies the following estimate:
\begin{equation}
\label{eq:mgf-estimate}
\mathbb E [e^{\alpha\tau}] \le \hat{V}(\hat{N}^{(1)}(0)\vee \hat{N}^{(2)}(0)) = \tilde{ V}(\bx_1)\vee \tilde{V}(\bx_2).
\end{equation}
The coupling time~\eqref{eq:coupling-time} for the processes $\hat{N}^{(1)}$ and $\hat{N}^{(2)}$ is also a coupling time for the processes $X^{(1)}$ and $X^{(2)}$. The rest of the proof is similar to \cite[Theorem 2.2]{LMT1996}, \cite[Section 5]{MyOwn12}.

\section{Proof of Theorem~\ref{thm:hydro}}\label{sec:theorem41}

\subsection{Overview of the proof} Recall the definition of $\mathcal E_f$ from~\eqref{eq:empirical-function}. It\^o's formula applied  to $\mathcal E_f(X^{(N)}(t)) =(\mu^{(N)}_t, f)$ for some function $f \in \mathcal C_b^2$ reads as:
\begin{equation}
\label{eq:Ito}
\mathcal E_{f}\left(X^{(N)}(t)\right) = \mathcal E_{f}\left(X^{(N)}(0)\right) + \int_0^t\cL\mathcal E_{f}\bigl(X^{(N)}(s)\bigr)\,\mathrm{d}s + \mathcal M_N^f(t),
\end{equation}
where $(\mathcal M_N^f(t),\, t \ge 0)$ is a real-valued rcll local martingale. Between jumps (while the number of banks  $\mathfrak{n}( X^{(N)}(t) ) = \mathcal N_{N}(t)$ stays constant), the local martingale $\mathcal M_N^f$ is given by
$$
\mathrm{d}\mathcal M_N^f(t) := \frac{\sigma}{\mathcal N_N(t)}\sum\limits_{i=1}^{\mathcal{N}_N(t)}(D_1f)\bigl(X_i^{(N)}(t)\bigr)\,\mathrm{d}W_i(t).
$$
First, let us state the main convergence lemma, which makes the analytical crux of the proof.

\begin{lemma}
Take a function $f \in \mathcal C^2_b$. Recalling~\eqref{eq: empiricaldist}, take a sequence $(\mathbf{x}^{(k)})_{k \ge 1}$ in $\mathfrak X$ with
\begin{equation}
\label{eq:standard-sequence}
\mathfrak n\left(\mathbf{x}^{(k)}\right) = k\quad\mbox{and}\quad \mu_{\mathbf{x}^{(k)}} \to \nu\quad \mbox{in}\ \mathcal W_p \quad\text{as} \quad k \to \infty.
\end{equation}
 Then we have the following convergence of means and generators, as $N\to \infty$:
\begin{align}
\label{eq:fundamental-convergence}
\overline{\mathbf{x}}^{(k)} \to \overline{\nu};\quad \mbox{and}\quad 
\mathfrak{L}\mathcal E_f\left(\mathbf{x}^{(k)}\right) \to \mathcal A(\nu, f) \quad \text{from} \quad\eqref{eq:main-operator}.
\end{align}
\label{lemma:fundamental-convergence}
\end{lemma}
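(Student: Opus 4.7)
The plan is to decompose $\mathfrak{L}\mathcal{E}_f(\mathbf{x}^{(k)})$ according to the three summands of $\mathfrak{L}$ in~\eqref{eq:generator}---diffusion, births, and defaults---and show each piece converges to its counterpart in $\mathcal{A}(\nu, f)$ from~\eqref{eq:main-operator}. Throughout I will write $\mu^{(k)} := \mu_{\mathbf{x}^{(k)}}$, $s_k := \mathfrak{s}(\mathbf{x}^{(k)}) = k\overline{\mathbf{x}}^{(k)}$, and $y_k := s_k/k$. The mean convergence $\overline{\mathbf{x}}^{(k)} \to \overline{\nu}$ is an immediate application of Lemma~\ref{lemma:technical-convergence} to $f_1(x) = x$ (admissible since $p > 1$ permits $q = 1$); in particular $y_k \to \overline{\nu}$.

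The diffusion piece equals $(\mu^{(k)}, \mathcal{G}f)$ by linearity of $\mathcal{G}$, and since $\mathcal{G}f$ is bounded and continuous for $f \in \mathcal{C}^2_b$, the weak convergence implied by $\mathcal{W}_p$-convergence gives $(\mu^{(k)}, \mathcal{G}f) \to (\nu, \mathcal{G}f)$. The birth piece simplifies by direct calculation---using $\mathcal{E}_f((\mathbf{x}, y)) - \mathcal{E}_f(\mathbf{x}) = \frac{1}{k+1}[f(y) - (\mu_{\mathbf{x}}, f)]$---to $\frac{\lambda_k(s_k)}{k+1}\bigl[(\mathcal{B}_{k, s_k}, f) - (\mu^{(k)}, f)\bigr]$. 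Assumptions~\ref{asmp:birth-measures} and~\ref{asmp:birth-intensities} provide the uniform convergences $\lambda_k(ky_k)/k \to \lambda_\infty(\overline{\nu})$ and $\mathcal{B}_{k, ky_k} \to \mathcal{B}_{\infty, \overline{\nu}}$ in $\mathcal{W}_p$; combined with the continuity of the limits this yields the birth-piece limit $\lambda_\infty(\overline{\nu})\bigl[(\mathcal{B}_{\infty, \overline{\nu}}, f) - (\nu, f)\bigr]$.

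The default piece carries the substantive work. For each defaulting index $i$, I would integrate over $\mathcal{D}^{\otimes (k-1)}_{k, s_k, x_i}$ and apply Lemma~\ref{lemma:Taylor}---whose hypothesis $z \le 1 - \varepsilon_0$ holds almost surely for large $k$ by Remark~\ref{rmk:technical-default-measures}---to obtain $\mathbb{E}[f(x_j(1-\xi_i))] = f(x_j) - \overline{\mathcal{D}}(k, s_k, x_i) D_1 f(x_j) + R_{ij}$ with $|R_{ij}| \le C_\varepsilon \triplenorm{f}\, \mathbb{E}[\xi_i^2]$ and $\mathbb{E}[\xi_i^2] = O(k^{-2})$ by~\eqref{eq:D-const}. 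Using $\sum_{j \ne i} g(x_j) = k(\mu^{(k)}, g) - g(x_i)$ to close the inner average, then multiplying by $\kappa_k(s_k, x_i)$ and summing over $i$, I extract two leading contributions: a ``killing'' piece proportional to $(\mu^{(k)}, \kappa_k)(\mu^{(k)}, f) - (\mu^{(k)}, \kappa_k f)$, and a ``drift'' piece proportional to $(\mu^{(k)}, D_1 f)(\mu^{(k)}, \kappa_k \overline{\mathcal{D}}(k, s_k, \cdot))$, modulo $O(1/k)$ remainders. The essential scaling is $k\overline{\mathcal{D}}(k, s_k, \cdot) \to \overline{\mathcal{D}}_\infty(\overline{\nu}, \cdot)$ from Assumption~\ref{asmp:default-measures}: the $k$ potential defaulters each contributing an $O(1/k)$ impact balance to a finite limit. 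Combined with Assumption~\ref{asmp:default-intensities} and $\mu^{(k)} \to \nu$ applied against the bounded continuous integrands $\kappa_\infty(\overline{\nu}, \cdot)$, $\kappa_\infty(\overline{\nu}, \cdot) f$, and $\kappa_\infty(\overline{\nu}, \cdot)\overline{\mathcal{D}}_\infty(\overline{\nu}, \cdot)$, the default piece converges to its limiting form, completing the match with~\eqref{eq:main-operator}.

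The main obstacle is the delicate bookkeeping in the default piece: many terms arise from the double sum over impacted index $j$ and defaulting index $i$, and each must be classified as leading-order or $O(1/k)$. In particular, I need to verify that the aggregated Taylor remainder $\sum_i \kappa_k(s_k, x_i) \sum_{j \ne i} R_{ij}/(k-1)$ is of order $k \cdot k \cdot O(k^{-2})/(k-1) = O(1/k)$, and that cross terms such as $\sum_i \kappa_k(s_k, x_i) \overline{\mathcal{D}}(k, s_k, x_i) D_1 f(x_i)/(k-1)$ also vanish in the limit. These bounds rely crucially on the uniform boundedness of $f$, $D_1 f$, $D_2 f$ (since $f \in \mathcal{C}^2_b$) and on the uniform estimates from Remark~\ref{rmk:technical-default-measures}. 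The asymmetry that $\mathcal{D}_{k, s_k, x_i}$ depends on the defaulting bank's own size rather than on the impacted bank's is what produces the product structure in the drift contribution, and drives the double-sum unwinding described above.
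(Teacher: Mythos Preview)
Your proposal is correct and follows essentially the same approach as the paper's proof. Both decompose $\mathfrak{L}\mathcal{E}_f$ into diffusion, birth, and default pieces; both handle the birth piece via the explicit simplification and Assumptions~\ref{asmp:birth-measures}--\ref{asmp:birth-intensities}; and both treat the default piece by applying Lemma~\ref{lemma:Taylor} (with the $\varepsilon_0$-bound from Remark~\ref{rmk:technical-default-measures}) to extract the drift contribution $-(\mu^{(k)}, k\overline{\mathcal D}\,\kappa_k)(\mu^{(k)}, D_1 f)$ and the killing contribution $(\mu^{(k)}, \kappa_k)(\mu^{(k)}, f) - (\mu^{(k)}, \kappa_k f)$, with all remainders controlled at order $O(1/k)$ using~\eqref{eq:D-const} and the uniform bound $C_\kappa$. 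The only cosmetic difference is organizational: the paper first rearranges the default double sum algebraically into three sub-pieces $I_{3,1}, I_{3,2}, I_{3,3}$ before Taylor-expanding $I_{3,1}$, whereas you Taylor-expand first and then close the sum via $\sum_{j\ne i} = k(\mu^{(k)},\cdot) - (\cdot)(x_i)$; the resulting terms and estimates are identical.
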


The following technical estimate is used repeatedly in the subsequent proofs.

\begin{lemma} For a constant $C$ depending on the parameters, we have
$$
\left|\mathfrak{L}\mathcal E_f(\mathbf{x})\right| \le C\left[1 + \overline{\mathbf{x}}\right]\cdot\triplenorm{f},\quad f \in \mathcal C^2_b,\quad \mathbf{x} \in \mathcal X\setminus\{\varnothing\}.
$$
\label{lemma:preliminary-estimate}
\end{lemma}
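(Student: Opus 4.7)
The strategy is to decompose $\mathfrak L\mathcal E_f(\bx)$ along the three lines of~\eqref{eq:generator} (continuous diffusion, births, defaults) and to bound each piece separately. Write $n := \mathfrak n(\bx)$ and $s := \mathfrak s(\bx)$. Since $\partial_{x_i}\mathcal E_f(\bx) = n^{-1}f'(x_i)$, the continuous piece equals $n^{-1}\sum_{i=1}^n[rD_1f(x_i) + \tfrac12\sigma^2 D_2f(x_i)]$ and is uniformly bounded by $(r+\tfrac12\sigma^2)\triplenorm{f}$. For the birth piece, the identity $\mathcal E_f((\bx, y)) - \mathcal E_f(\bx) = (f(y) - \mathcal E_f(\bx))/(n+1)$ collapses the integral into $\lambda_n(s)[(\mathcal B_{n,s},f) - \mathcal E_f(\bx)]/(n+1)$. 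Assumption~\ref{asmp:birth-intensities} gives $\lambda_n(s)/(n+1) \le C_\lambda(n+s)/(n+1) \le C_\lambda(1+\overline{\bx})$, and combined with the crude bound $|(\mathcal B_{n,s},f) - \mathcal E_f(\bx)| \le 2\|f\| \le 2\triplenorm{f}$ this yields a birth contribution bounded by $2C_\lambda(1+\overline{\bx})\triplenorm{f}$.

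\smallskip

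The default piece will be the principal obstacle: the outer sum in~\eqref{eq:generator} has $n$ terms, so a naive estimate would produce an unwanted factor of $n$; the saving is the $1/n$-scaling of the contagion magnitudes built into Assumption~\ref{asmp:default-measures}. The plan is to telescope
\begin{equation*}
\mathcal E_f(\bx_{-i}\circ(\mathbf e - \mathbf z_{-i})) - \mathcal E_f(\bx) = \bigl[\mathcal E_f(\bx_{-i}) - \mathcal E_f(\bx)\bigr] + \frac{1}{n-1}\sum_{j\ne i}\bigl[f(x_j(1-z_j)) - f(x_j)\bigr].
\end{equation*}
The first bracket evaluates to $\frac{1}{n(n-1)}\sum_{j\ne i}f(x_j) - f(x_i)/n$ and is deterministically bounded by $2\|f\|/n$. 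For each term in the second sum, for $n \ge n_0$ the support bound $z_j \le 1-\varepsilon_0$ of Remark~\ref{rmk:technical-default-measures} makes Lemma~\ref{lemma:Taylor} applicable and yields $|f(x_j(1-z_j)) - f(x_j)| \le (z_j + C_{\varepsilon_0} z_j^2)\triplenorm{f}$. Integrating against $\cD_{n,s,x_i}$ and using $z_j^2 \le z_j$ together with~\eqref{eq:D-const} for $q = 1$ bounds the expected per-$j$ shock by $(C/n)\triplenorm{f}$, so the second bracket has expectation of order $\triplenorm{f}/n$. Multiplying by $\kappa_n(s, x_i) \le C_\kappa$ (Assumption~\ref{asmp:default-intensities}) and summing the $n$ summands produces a uniform $O(\triplenorm{f})$ bound on the default contribution. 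The bounded values $n < n_0$ are absorbed into the constant via the crude estimate $\|f\| \le \triplenorm{f}$.

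\smallskip

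Assembling the three estimates gives $|\mathfrak L\mathcal E_f(\bx)| \le C(1+\overline{\bx})\triplenorm{f}$, with the linear growth in $\overline{\bx}$ arising solely from the birth intensity bound of Assumption~\ref{asmp:birth-intensities}; the continuous and default pieces contribute only constant multiples of $\triplenorm{f}$. This estimate will feed directly into the tightness and generator-identification steps of Theorem~\ref{thm:hydro}, via control of the compensator $\int_0^t \mathfrak L\mathcal E_f(X^{(N)}(s))\,\md s$ in~\eqref{eq:Ito}.
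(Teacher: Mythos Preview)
Your proposal is correct and follows essentially the same route as the paper: split $\mathfrak{L}\mathcal E_f$ into the diffusion, birth, and default contributions $I_1,I_2,I_3$ of~\eqref{eq:term-1}--\eqref{eq:term-3}, bound $I_1$ by $r\|D_1f\|+\tfrac{\sigma^2}{2}\|D_2f\|$, bound $I_2$ by $2C_\lambda(1+\overline{\bx})\|f\|$ via Assumption~\ref{asmp:birth-intensities}, and control $I_3$ through Lemma~\ref{lemma:Taylor} together with the $1/n$-scaling of the contagion measures encoded in~\eqref{eq:D-const}. Your telescoping of the default increment is a slightly different but equivalent regrouping of the paper's $I_{3,1}+I_{3,2}+I_{3,3}$, and your use of $z^2\le z$ to work only with $C_{\cD,1}$ (and your explicit treatment of $n<n_0$) are harmless simplifications of the paper's estimate, which keeps $C_{\cD,1}$ and $C_{\cD,2}$ separate.
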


We next show that the term $\mathcal M_N^f$ tends to zero. The rough idea is as follows: Since jump sizes tend to zero, the process converges to a continuous limit. Since the quadratic variation converges to zero, the limit is a continuous martingale with zero quadratic variation, which implies that the limit itself is identically zero.  To formalize this argument and apply it to our more complicated situation, we state and prove the following series of lemmata.

\begin{lemma} For every $T > 0$, $r > 1$, and $f \in \mathcal C^2_b$,
$$
\E \Big[ \sup\limits_{0 \le s \le T}\bigl[\mathcal M_N^f(s)\bigr]^r \Big] \to 0,\quad\mbox{as}\quad N \to \infty.
$$
\label{lemma:mgle-to-0}
\end{lemma}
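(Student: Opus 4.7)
The approach is to apply the Burkholder--Davis--Gundy (BDG) inequality, which reduces the statement to a bound on the predictable quadratic variation $\langle\mathcal M_N^f\rangle_T$. Decompose $\mathcal M_N^f = \mathcal M_N^{f,c} + \mathcal M_N^{f,J}$ into its continuous (Brownian-driven) part and its compensated pure-jump part from births and defaults. The continuous piece is driven by $\mathcal N_N(t)$ independent Brownians scaled by $\mathcal N_N(t)^{-1}$, giving
\[
\langle\mathcal M_N^{f,c}\rangle_t \,=\, \sigma^2\int_0^t \frac{1}{\mathcal N_N(s)^2}\sum_{i=1}^{\mathcal N_N(s)}\bigl[(D_1 f)(X_i^{(N)}(s))\bigr]^2\md s \,\le\, \frac{\sigma^2 T \triplenorm{f}^2}{\inf_{s \le T}\mathcal N_N(s)},
\]
which is $O(1/N)$ as soon as $\mathcal N_N$ stays of order $N$ on $[0,T]$.

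For the jump part, at each birth $|\Delta\mathcal E_f| = |f(y) - \mathcal E_f|/(\mathcal N_N+1) = O(\triplenorm{f}/\mathcal N_N)$. At each default of bank $i$, expanding the explicit difference into a contagion sum $\tfrac{1}{\mathcal N_N - 1}\sum_{k\ne i}[f(x_k(1-\xi_{ki})) - f(x_k)]$ plus two removal terms, and using Lemma~\ref{lemma:Taylor} together with $\xi_{ji}\le 1-\varepsilon_0$ (Remark~\ref{rmk:technical-default-measures}) and the $L^2$ bound $\E[\xi_{ji}^2]=O(\mathcal N_N^{-2})$ from~\eqref{eq:D-const} with $q=2$, the conditional second moment satisfies $\E[(\Delta \mathcal E_f)^2\,|\,\mathfrak F_{\tau-}] = O(\triplenorm{f}^2/\mathcal N_N^2)$. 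Since Assumptions~\ref{asmp:birth-intensities} and~\ref{asmp:default-intensities} bound the total jump intensity by $C(\mathcal N_N + \mathcal S_N)$, the predictable compensator of the jump quadratic variation satisfies
\[
\E\langle\mathcal M_N^{f,J}\rangle_T \,\le\, C \triplenorm{f}^2 \int_0^T \E\!\left[\frac{1 + \mathbf m_N(s)}{\mathcal N_N(s)}\right]\md s,
\]
again of order $1/N$ when $\mathcal N_N(s) \sim N$ and $\E[\mathbf m_N(s)]$ stays bounded on $[0,T]$.

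To make the above rigorous, I would localize at $\tau_K := \inf\{t\ge 0 : \mathcal N_N(t) \le N/K \text{ or } \mathbf m_N(t) \ge K\}$, and combine Lemma~\ref{lemma:preliminary-estimate} with a Gronwall argument on $\E[V_0(X^{(N)}(s))]$ together with stochastic domination of $\mathcal N_N$ by a linear birth--death process (via Assumptions~\ref{asmp:birth-intensities},~\ref{asmp:default-intensities}) to show $\limsup_N \mathbb P(\tau_K \le T) \to 0$ as $K \to \infty$. Then BDG combined with Jensen's inequality (for $r\in(1,2]$) or direct higher-moment control of jump sums (for $r > 2$) yields $\E[\sup_{s \le T \wedge \tau_K}|\mathcal M_N^f(s)|^r] \le C_{r,K}N^{-r/2} \to 0$, and the contribution from $\{\tau_K < T\}$ is handled by a Cauchy--Schwarz split against a uniform-in-$N$ $L^{2r}$ bound on $\sup_{s \le T}|\mathcal M_N^f(s)|$. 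The \textbf{main obstacle} is this localization step --- preventing $\mathcal N_N$ from collapsing and $\mathbf m_N$ from blowing up on $[0,T]$ --- which depends crucially on the scaling built into Assumptions~\ref{asmp:birth-intensities}--\ref{asmp:default-intensities}.
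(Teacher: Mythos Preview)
Your approach via BDG plus localization is sound, but it takes a different and heavier route than the paper.

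The paper does \emph{not} invoke BDG. Instead it argues softly: (i) the jump sizes of $(\mu^{(N)},f)$ are bounded by $2\triplenorm{f}/\mathcal N_N(t-)$, hence tend to zero; (ii) the (continuous) predictable quadratic variation satisfies $\langle\mathcal M_N^f\rangle_T \le \sigma^2 T\,\|D_1 f\|^2/\mathfrak M_N^-(T)$; (iii) a uniform boundedness claim for $\mathcal M_N^f$ on $[0,T]$ (via the representation $\mathcal M_N^f = \mathcal E_f - \mathcal E_f(0) - \int \mathfrak L\mathcal E_f$ and Lemma~\ref{lemma:preliminary-estimate}). One then extracts an $L^2$-convergent subsequence of $\mathcal M_N^f(T)$, passes to the martingale closure, and observes that any limit is a continuous martingale with zero quadratic variation, hence identically zero. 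The subsequence principle finishes.

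The main practical difference is how the ``$\mathcal N_N$ stays of order $N$'' issue is handled. You frame it as a localization obstacle requiring stopping times $\tau_K$ and a separate tightness argument. The paper sidesteps this entirely: in its Section~7.4 it shows that $\mathfrak M_N^-(T)=\min_{t\le T}\mathcal N_N(t)$ stochastically dominates a $\mathrm{Bin}(N,e^{-C_\kappa T})$ variable, whence $\E\bigl[(\mathfrak M_N^-(T)\vee 1)^{-r}\bigr]\le C N^{-r}$ for every $r>0$. This single estimate replaces your entire localization machinery, and would equally well streamline your BDG argument --- you can bound $[\mathcal M_N^f]_T$ directly in terms of $(\mathfrak M_N^-(T))^{-1}$ times the jump count (itself controlled by $\mathfrak M_N^+(T)$, dominated by a pure-birth process) and take moments with H\"older. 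Your route gives a more explicit rate; the paper's is shorter.
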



\begin{lemma} For every $T > 0$, there exists a constant $C_{T} > 0$ such that
$$
\E \Big[ \sup\limits_{t \in [0, T]}\bigl(\mu^{(N)}_t, f_p\bigr) \Big] \le C_{T}\quad\mbox{for all}\quad N \ge 1.
$$
\label{lemma:bdd-moment}
\end{lemma}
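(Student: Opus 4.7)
The plan is to use $\mathcal E_{f_p}$ itself as a quasi-Lyapunov function, combine an It\^o--Dynkin identity with Gronwall's inequality to obtain a pointwise-in-$t$ expectation bound, and then upgrade to the supremum via the martingale decomposition together with Burkholder--Davis--Gundy. The central estimate I want to establish is that there exist absolute constants $A, B > 0$ (independent of $\bx$ and $N$) with
\[
\mathfrak{L} \mathcal E_{f_p}(\bx) \le A\, \mathcal E_{f_p}(\bx) + B, \qquad \bx \in \mathcal X.
\]

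To prove this, I would plug $\mathcal E_{f_p}$ into the generator~\eqref{eq:generator} and analyze its three pieces separately. The diffusion piece collapses to $(pr + \sigma^2 p(p-1)/2)\mathcal E_{f_p}(\bx)$ by the eigenfunction identity~\eqref{eq:acting-simply}. The birth piece equals $\frac{\lambda_{\mathfrak{n}(\bx)}(\mathfrak{s}(\bx))}{\mathfrak{n}(\bx)+1}\bigl[(\mathcal B_{\mathfrak{n}(\bx),\mathfrak{s}(\bx)}, f_p) - \mathcal E_{f_p}(\bx)\bigr]$, which Assumptions~\ref{asmp:birth-measures},~\ref{asmp:birth-intensities} bound by $C(1 + \overline{\bx})$; Jensen then gives $\overline{\bx} \le \mathcal E_{f_p}(\bx)^{1/p} \le 1 + \mathcal E_{f_p}(\bx)$, absorbing this term into $A\mathcal E_{f_p} + B$. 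The subtle contribution is the default term: although each individual default changes $\mathcal E_{f_p}$ by an amount of order $\mathcal E_{f_p}(\bx) + x_i^p/\mathfrak{n}(\bx)$ and the aggregate default rate is $\sum_i \kappa_{\mathfrak{n}(\bx)}(\mathfrak{s}(\bx), x_i) = O(\mathfrak{n}(\bx))$, the apparent linear-in-$n$ blow-up is neutralized by Assumption~\ref{asmp:default-measures}: the $O(1/n)$ scaling of $\xi_{n,\cdot,\cdot}$ forces $\E[(1-\xi)^p] = 1 - O(1/n)$, so summing and combining terms produces a net compensator drift of order $O(\mathcal E_{f_p}(\bx))$, uniform in $n$.

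With the Lyapunov bound in hand, I would apply Dynkin's formula to the localized process $X^{(N)}(\cdot \wedge \tau_R)$, where $\tau_R := \inf\{t : \mathcal E_{f_p}(X^{(N)}(t)) \ge R\}$; monotone convergence as $R \to \infty$ combined with Gronwall delivers $\sup_{t \in [0,T]} \E[\mathcal E_{f_p}(X^{(N)}(t))] \le C_T$ uniformly in $N$. Uniformity in $N$ of the initial data is guaranteed because $\mathcal W_p$-convergence of $\mu^{(N)}_0$ forces $\sup_N (\mu^{(N)}_0, f_p) < \infty$ via Lemma~\ref{lemma:technical-convergence}. To promote this pointwise bound to a supremum-inside-expectation bound, I would take the running supremum in the decomposition $\mathcal E_{f_p}(X^{(N)}(t)) = \mathcal E_{f_p}(X^{(N)}(0)) + \int_0^t \mathfrak{L}\mathcal E_{f_p}(X^{(N)}(s))\,ds + M_N(t)$, use $\mathfrak{L}\mathcal E_{f_p} \le A\mathcal E_{f_p} + B$, and apply trajectory-wise Gronwall to the resulting integral inequality. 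Setting $U_N(t) := \sup_{s \le t} \mathcal E_{f_p}(X^{(N)}(s))$, this yields
\[
\E[U_N(T)] \le e^{AT}\bigl[\E \mathcal E_{f_p}(X^{(N)}(0)) + BT + \E \sup_{s \le T} |M_N(s)|\bigr].
\]

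The main obstacle is then bounding $\E \sup_{s \le T}|M_N(s)|$ uniformly in $N$. I would apply Burkholder--Davis--Gundy to reduce this to $\E \sqrt{[M_N]_T}$, then handle the continuous and jump parts of the quadratic variation separately: the continuous contribution is $p^2\sigma^2 \int_0^T \mathcal E_{f_{2p}}(X^{(N)}(s))/\mathcal{N}_N(s)\,ds$, which benefits from the $1/\mathcal{N}_N$ averaging, while the purely-discontinuous compensator crucially exploits once again the $O(1/n)$ default-impact scaling from Assumption~\ref{asmp:default-measures}: each individual $(\Delta M_N)^2$ has conditional mean of order $1/n^2$, so the aggregate $O(n)$ default rate still contributes a $O(1)$ second-moment compensator. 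A Jensen/Cauchy--Schwarz step to pass from $\E\sqrt{[M_N]_T}$ to $\sqrt{\E[M_N]_T}$ and the previously established expectation bound close the argument. The technical linchpin throughout is that Assumption~\ref{asmp:default-measures} is exactly sharp enough to transform the aggregate $O(n)$ defaults into a tame, order-one effect in both the drift and the diffusion of the empirical process.
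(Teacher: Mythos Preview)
Your strategy is genuinely different from the paper's. The paper exploits the monotonicity built into the model: since every $X_i^{(N)}$ can only jump \emph{downward} at default times, it is pathwise dominated by the geometric Brownian motion started from its initial (or birth) value. This yields the crude but effective bound
\[
\sup_{t\le T}\bigl(\mu_t^{(N)}, f_p\bigr) \le \frac{1}{\mathfrak{M}^-_N(T)\vee 1}\sum_{i=1}^{\mathfrak{M}^+_N(T)} \sup_{t\le T}\bigl[X_i^{(N)}(t)\bigr]^p,
\]
after which the random number of summands is controlled by comparison with a pure birth process, the normalizer $\mathfrak{M}^-_N$ by a binomial lower bound (each bank survives to time $T$ with probability at least $e^{-C_\kappa T}$), and each individual supremum by explicit GBM moments; a Cauchy--Schwarz split and Wald's identity finish. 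No It\^o--Dynkin expansion of $\mathcal E_{f_p}$ is ever used. Your Lyapunov/Gronwall/BDG route is more analytic and never invokes this pathwise domination.

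There is, however, a real gap in your closing BDG step. Passing from $\E\sqrt{[M_N]_T}$ to $\sqrt{\E[M_N]_T}$ requires a uniform bound on $\E[M_N]_T$, but the compensator of $[M_N]$ involves moments you do not control. The continuous part contributes $p^2\sigma^2\!\int_0^T \mathcal N_N(s)^{-1}(\mu_s^{(N)}, f_{2p})\,\md s$, and the jump part contributes terms of order $\int_0^T \mathcal N_N(s)^{-1}\bigl[(\mu_s^{(N)}, f_p)^2 + (\mathcal B_{\cdot,\cdot}, f_{2p})\bigr]\,\md s$. These demand control of $(\mu^{(N)}, f_{2p})$, of $(\mu^{(N)}, f_p)^2$, and of the $2p$-th moment of the birth law; none of these is supplied by your pointwise Gronwall bound on $\E(\mu_t^{(N)}, f_p)$, nor by Assumption~\ref{asmp:birth-measures}, which gives only $p$-th moments. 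The ``previously established expectation bound'' therefore does not feed back to close the loop. The paper's route avoids this precisely because the running supremum is handled pathwise, bank by bank, via GBM domination (and GBM has all moments), rather than through the martingale $M_N$.
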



\begin{lemma} The sequence $(\mu^{(N)}_t,\, 0 \le t \le T)_{N \ge 1}$ is tight in $\cD([0, T], \mathcal P_q)$ for every $q< p$.
\label{lemma:tight}
\end{lemma}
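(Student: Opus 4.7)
\smallskip

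The plan is to invoke Jakubowski's criterion for tightness in $\cD([0,T], \mathcal P_q)$. This requires two ingredients: (i) compact containment uniformly in $N$, and (ii) tightness of the one-dimensional real-valued projections $(F(\mu^{(N)}_\cdot))_{N\ge 1}$ in $\cD([0,T], \mathbb R)$ for every $F$ in some separating family of continuous maps $\mathcal P_q \to \mathbb R$.

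\smallskip

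For the compact containment (i), I would combine Lemma~\ref{lemma:bdd-moment} with Lemma~\ref{lemma:precompact}. Given $\eta > 0$, by Markov's inequality
\begin{equation*}
\PP\!\left(\sup_{0 \le t \le T}(\mu^{(N)}_t, f_p) > M\right) \le \frac{C_T}{M} \le \eta
\end{equation*}
for $M = M(\eta)$ large enough, uniformly in $N$. Since Lemma~\ref{lemma:precompact} asserts that $K_\eta := \{\nu \in \mathcal P_p : (\nu, f_p) \le M(\eta)\}$ is precompact in $\mathcal P_q$ (for $q < p$), this provides the required compact subset of $\mathcal P_q$ that all $\mu^{(N)}$ visit with probability $\ge 1-\eta$ during $[0,T]$.

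\smallskip

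For (ii), I would use the separating family $\mathcal F = \{F_f : f \in \mathcal C^2_b\}$ with $F_f(\nu) := (\nu, f)$; this family is separating on $\mathcal P_q$ because $\mathcal C^2_b$ is dense in $\mathcal C_b$ in the bounded pointwise sense via standard mollification. Fix $f \in \mathcal C^2_b$. By the It\^o decomposition~\eqref{eq:Ito},
\begin{equation*}
F_f(\mu^{(N)}_t) = F_f(\mu^{(N)}_0) + \int_0^t \mathfrak L \mathcal E_f(X^{(N)}(s))\,\mathrm{d}s + \mathcal M_N^f(t).
\end{equation*}
I would verify Aldous' criterion for this real-valued process. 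For any $(\mathfrak F_t)$-stopping time $\tau$ bounded by $T$ and any $\delta > 0$,
\begin{equation*}
|F_f(\mu^{(N)}_{\tau+\delta}) - F_f(\mu^{(N)}_\tau)| \le \int_\tau^{\tau+\delta}|\mathfrak L \mathcal E_f(X^{(N)}(s))|\,\mathrm{d}s + |\mathcal M_N^f(\tau+\delta) - \mathcal M_N^f(\tau)|.
\end{equation*}
By Lemma~\ref{lemma:preliminary-estimate} the integrand is dominated by $C(1+\overline{X^{(N)}(s)})\triplenorm{f}$, and Lemma~\ref{lemma:bdd-moment} (used with exponent $1 \le p$) bounds $\E[\sup_{s \le T}\overline{X^{(N)}(s)}]$ uniformly in $N$; hence the drift increment is $O(\delta)$ in $L^1$, uniformly in $N$ and in $\tau$. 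The martingale increment vanishes in probability as $N \to \infty$ uniformly in $\delta \le 1$ by Lemma~\ref{lemma:mgle-to-0} (or, more directly, by Doob's inequality applied to the quadratic variation, which is $O(\delta/N)$). Together these yield Aldous' condition, so the projection is tight in $\cD([0,T], \mathbb R)$.

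\smallskip

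The main obstacle I anticipate is checking the separating property of $\mathcal F$ with respect to the Wasserstein topology on $\mathcal P_q$ rather than merely weak convergence: one needs continuity of $F_f$ on $\mathcal P_q$, which holds since $f \in \mathcal C^2_b$ is bounded continuous, together with the fact that $\mathcal C^2_b$ separates points of $\mathcal P_q$. Everything else is essentially plugging the three preparatory lemmas (moments, generator bound, martingale decay) into the standard Jakubowski--Aldous machine. Applying Jakubowski's theorem then yields tightness of $(\mu^{(N)})_{N \ge 1}$ in $\cD([0,T], \mathcal P_q)$ for each $q < p$.
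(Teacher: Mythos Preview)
Your proposal is correct and follows essentially the same strategy as the paper: compact containment via Lemma~\ref{lemma:bdd-moment} and Lemma~\ref{lemma:precompact}, then tightness of the real-valued projections $(\mu^{(N)}_\cdot, f)$ for $f \in \mathcal C^2_b$ using the generator bound of Lemma~\ref{lemma:preliminary-estimate} together with Lemma~\ref{lemma:mgle-to-0}. The only cosmetic difference is that the paper invokes \cite[Proposition 3.9.1]{EthierKurtz}, which requires a \emph{dense} subset of $C_b(\mathcal P_q)$, and therefore passes to the algebra generated by $\{(\cdot, f): f \in \mathcal C^2_b\}$ via Stone--Weierstrass and checks tightness of finite products (using uniform boundedness of each factor); you instead appeal to Jakubowski's criterion, for which the linear family itself already suffices since it separates points and is closed under addition, and you verify tightness of each projection via Aldous. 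Your route is marginally cleaner in that it avoids the product step, but the underlying ingredients and logic are identical.
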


Assume we already proved Lemmata \ref{lemma:fundamental-convergence},~\ref{lemma:mgle-to-0},~\ref{lemma:bdd-moment},~\ref{lemma:tight}. Let us complete the proof of Theorem~\ref{thm:hydro}. In light of Lemma~\ref{lemma:tight}, it suffices to show the following statement: For $q \in [1,  p)$, every weak limit point $\bigl(\mu^{(\infty)}_t,\, t \in [0, T]\bigr)$ of $\bigl(\mu^{(N)}_t,\, 0 \le t \le T\bigr)_{N \ge 1}$ in $\cD([0, T], \mathcal P_q)$ is governed by the McKean-Vlasov equation~\eqref{eq:McKean}.
Indeed, for any function $f \in \mathcal C^2_b$, we can rewrite~\eqref{eq:Ito} as follows:
\begin{equation}
\label{eq:again-Ito}
\bigl(\mu^{(N)}_t, f\bigr) = \bigl(\mu^{(N)}_0, f\bigr) + \int_0^t \cL \cE_f\left(X^{(N)}(s)\right)\,\mathrm{d}s + \mathcal M^f_N(t).
\end{equation}
Lettting $N \to \infty$ in~\eqref{eq:again-Ito} with Lemmata  \ref{lemma:fundamental-convergence}, ~\ref{lemma:mgle-to-0} and  \ref{lemma:bdd-moment}, we have that the last term vanishes while the key middle term converges to $\mathcal A(\mu^{(\infty)}_s, f)$. Overall we thus obtain that the limit obeys
$$
\bigl(\mu^{(\infty)}_t, f\bigr) = \bigl(\mu^{(\infty)}_0, f\bigr) + \int_0^t\mathcal A(\mu^{(\infty)}_s, f)\,\mathrm{d}s.
$$
Since this holds true for all $f \in \mathcal C^2_b$, then, as explained in subsection 4.1, this is the equivalent definition of the McKean-Vlasov jump-diffusion. This completes the proof of Theorem~\ref{thm:hydro}.

\subsection{Proof of Lemma~\ref{lemma:fundamental-convergence}} Convergence of means follows from Lemma~\ref{lemma:technical-convergence}. Now, let us show the second statement in~\eqref{eq:fundamental-convergence}.  Apply the generator $\mathfrak{L}$ from~\eqref{eq:generator} to $\mathcal E_f$ from~\eqref{eq:empirical-function}, for $f \in \mathcal C^2$, with the argument $\mathbf{x} = \bigl(x_1, \ldots, x_{\mathfrak{n}(\mathbf{x})}\bigr) \ne \varnothing$. At first, we just do calculations of the generator, and only afterwards we plug in $\mathbf{x}^{(k)}$ instead of $\mathbf{x}$. Corresponding to the three lines in the right-hand side of \eqref{eq:generator} we shall use the shorthand $\mathfrak{L}\, \cE_f = I_1 + I_2 + I_3$. The first term $I_1$ involving the diffusion operator is calculated as follows:
\begin{equation*}
\frac{\partial\mathcal E_f}{\partial x_i}(\bx) = \frac1{\mathfrak n(\bx)}f'(x_i),\quad \frac{\partial^2\mathcal E_f}{\partial x_i^2}(\bx) = \frac1{\mathfrak n(\bx)}f''(x_i),\quad i = 1, \ldots, \mathfrak n(\mathbf x),
\end{equation*}
which leads to
\begin{equation}
\label{eq:term-1}
I_1 = \frac1{\mathfrak n(\bx)}\sum\limits_{i=1}^{\mathfrak n(\bx)}\Bigl[rx_if'(x_i) + \frac{\sigma^2}2x_i^2f''(x_i)\Bigr] = \frac1{\mathfrak n(\bx)}\sum\limits_{i=1}^{\mathfrak n(\bx)}\cG f(x_i) = (\mu_{\mathbf{x}}, \cG f).
\end{equation}
Next, the second term with the birth rates  is equal to
\begin{align}
\label{eq:term-2}
\begin{split}
I_2 & =  \lambda_{\mathfrak n(x)}(\mathfrak{s}(\bx))\int_0^{\infty}\Bigl[\frac1{\mathfrak n(\bx)+1}\Bigl(\sum\limits_{i=1}^{\mathfrak n(\bx)}f(x_i) + f(y)\Bigr) - \frac1{\mathfrak n(\bx)}\sum\limits_{i=1}^{\mathfrak n(\bx)}f(x_i)\Bigr]\,\mathcal B_{\mathfrak n(\bx), \mathfrak s(\bx)}(\mathrm{d} y) \\ & =
-\frac1{\mathfrak n(\mathbf x)(1 + \mathfrak n(\mathbf x))}\lambda_{\mathfrak n(\bx)}( \mathfrak s(\bx))\sum\limits_{i=1}^{\mathfrak n(\mathbf x)}f(x_i) + \frac{\lambda_{\mathfrak n(\mathbf x)}(\mathfrak s(\mathbf x))}{\mathfrak n(\mathbf x) + 1}\int_0^{\infty}f(y)\,\mathcal B_{\mathfrak n(\mathbf x), \mathfrak s(\mathbf x)}(\mathrm{d} y) \\ & =
- \frac{\lambda_{\mathfrak n(\bx)}(\mathfrak s(\bx))}{\mathfrak n(\mathbf x)+1}\left(\mu_{\mathbf{x}}, f\right) +
\frac{\lambda_{\mathfrak n(\mathbf x)}(\mathfrak s(\mathbf x))}{\mathfrak n(\mathbf x) + 1}\int_0^{\infty}f(y)\,\mathcal B_{\mathfrak n(\mathbf x), \mathfrak s(\mathbf x)}(\mathrm{d} y).
\end{split}
\end{align}
Finally, the third term is
\begin{align*}
I_3 = \sum\limits_{i=1}^{\mathfrak n(\mathbf x)}&\kappa_{\mathfrak n(\mathbf x)}(\mathfrak s(\mathbf x), x_i) \frac1{\mathfrak n(\mathbf x) -1 }\sum\limits_{j \neq i }^{\mathfrak n(\mathbf x)} \int_0^1 f(x_j(1-z_j)) \,\cD_{\mathfrak n(\mathbf x), \mathfrak s(\mathbf x), x_i}(\mathrm{d} z_j) \\  & - \frac1{\mathfrak n(\mathbf x)}\sum\limits_{i=1}^{\mathfrak n(\mathbf x)}\kappa_{\mathfrak n(\mathbf x)}(\mathfrak s(\mathbf x), x_i)\sum\limits_{j=1}^{\mathfrak n(\mathbf x)} \int_0^{1} f(x_j)\,\cD_{\mathfrak n(\mathbf x), \mathfrak s(\mathbf x), x_i}(\mathrm{d}z_j),
\end{align*}
which we re-arrange as
\begin{align}
\label{eq:term-3}
\begin{split}
I_3 &= \frac1{\mathfrak n(\mathbf x) }\sum\limits_{i=1}^{\mathfrak n(\mathbf x)}\kappa_{\mathfrak n(\mathbf x)}(\mathfrak s(\mathbf x), x_i)\sum\limits_{j  =1 }^{\mathfrak n(\mathbf x)} \int_0^{1} \left[f(x_j(1-z_j)) - f(x_j)\right]\,\cD_{\mathfrak n(\mathbf x), \mathfrak s(\mathbf x), x_i}(\mathrm{d} z_j) \\ & + \frac1{\mathfrak n(\mathbf x)(\mathfrak n(\mathbf x)-1)}\sum\limits_{i=1}^{\mathfrak n(\mathbf x)}\kappa_{\mathfrak n(\mathbf x)}(\mathfrak s(\mathbf x), x_i)\sum\limits_{j=1}^{\mathfrak n(\mathbf x)} \int_0^{1} f(x_j(1 -z_j))\,\cD_{\mathfrak n(\mathbf x), \mathfrak s(\mathbf x), x_i}(\mathrm{d}z_j) \\ & - \frac1{\mathfrak n(\mathbf x)-1}\sum\limits_{i=1}^{\mathfrak n(\mathbf x)}\kappa_{\mathfrak n(\mathbf x)}(\mathfrak s(\mathbf x), x_i)\int_0^{1}f(x_i(1-z_i))\,\cD_{\mathfrak n(\mathbf x), \mathfrak s(\mathbf x),x_i}(\mathrm{d} z_i) =: I_{3,1} + I_{3,2} + I_{3,3} .
\end{split}
\end{align}
Now, substitute the following sequence in the formulae above:
\begin{equation}
\label{eq:substitution}
\mathbf{x} := \mathbf{x}^{(k)},\quad \mathfrak{n}\bigl(\mathbf{x}^{(k)}\bigr) = k,\quad s_k := \mathfrak s\left(\mathbf x^{(k)}\right).
\end{equation}
The first term of $\mathfrak{L}\mathcal E_f\left(\mathbf{x}^{(k)}\right)$, given  in~\eqref{eq:term-1}, converges as $k \to \infty$ as follows:
\begin{equation}
\label{eq:first-conv}
I_1 = (\mu_{\mathbf x^{(k)}}, \mathcal{G} f) \to (\nu, \cG_{\overline{\nu}}f).
\end{equation}
This follows from the observation that $\cG f \in \mathcal H_q$, and Lemma~\ref{lemma:technical-convergence}. Next, we get  convergence of the second term $I_2$  given in~\eqref{eq:term-2}:
\begin{equation}
\label{eq:second-conv}
I_2 \to \lambda_{\infty}\left(\overline{\nu}\right)\left(\mathcal B_{\infty, \overline{\nu}}, f\right)  - \lambda_{\infty}\left(\overline{\nu}\right)\left(\nu, f\right)
\end{equation}
from Assumptions~\ref{asmp:birth-measures} and~\ref{asmp:birth-intensities}, together with the observation that $f \in \mathcal C^2_b$, and another application of Lemma~\ref{lemma:technical-convergence}. Finally, let us show convergence of $I_3$ from~\eqref{eq:term-3}, i.e.,
\begin{equation}
\label{eq:third-conv}
I_3 \to  -\left(\nu, \kappa_{\infty}\left(\overline{\nu}, \cdot\right)f\right)  + (\nu, f) \left(\nu, \kappa_{\infty}\left(\overline{\nu}, \cdot\right)\right), \quad \text{ as } \quad k \to \infty .
\end{equation}
The first term $I_{3,1}$ in~\eqref{eq:term-3} can be expressed, using Lemma~\ref{lemma:Taylor}:
\begin{equation}
\label{eq:first-904}
I_{3,1} = -\frac1{k}\sum\limits_{i=1}^{k}\kappa_{k}(s_k, x^{(k)}_i)\cdot\sum\limits_{j=1}^{k}D_1f(x^{(k)}_j)\int_0^{1}z\,\cD_{k, s_k, x^{(k)}_i}(\mathrm{d} z) + \delta_k =: J_k + \delta_k,
\end{equation}
where the residual $\delta_k$ for $k \ge n_0$ can be estimated as
$$
|\delta_k | \le C_{\varepsilon_0}\triplenorm{f}\cdot \frac1k \sum\limits_{i=1}^{k}\kappa_{k}(s_k, x^{(k)}_i)\cdot k\cdot \int^{1}_{0}z^2 \mathcal D_{k, s_{k}, x_{i}^{(k)}} ({\mathrm d} z ).
$$
By Remark~\ref{rmk:technical-default-measures} and Assumption~\ref{asmp:default-intensities},
\begin{equation}
\label{eq:residual-estimate}
|\delta_k| \le  k^{-1}C_{\varepsilon_0}C_{\kappa}C_{\mathcal D, 2}\cdot\triplenorm{f}.
\end{equation}
Finally, the main term $J_k$ in~\eqref{eq:first-904} of $I_{3,1}$ can be written as
\begin{align}\notag
J_k &= -\frac{1}{k}\sum\limits_{i=1}^k\kappa_{k}(s_k, x^{(k)}_i)\cdot \frac{1}{k}\sum\limits_{j=1}^{k}kD_1f(x^{(k)}_j)\overline{\cD}(k, s_k, x^{(k)}_i) \\ & = -\left(\mu_{\mathbf{x}^{(k)}}, k\overline{\cD}(k, s_k, \cdot)\kappa_{k}(s_k, \cdot)\right)\left(\mu_{\mathbf{x}^{(k)}}, D_1f\right).
\label{eq:first-908}
\end{align}
As $k \to \infty$, the expression~\eqref{eq:first-908} tends to
\begin{equation}
\label{eq:J-n-conv}
\lim_{k \to \infty} J_k = -\left(\nu, \kappa_{\infty}(\overline{\nu}, \cdot)\overline{\cD}_{\infty}(\overline{\nu}, \cdot)\right)\left(\nu, D_1f\right).
\end{equation}
From~\eqref{eq:first-904},~\eqref{eq:residual-estimate}, and~\eqref{eq:J-n-conv}, we get
\begin{equation}
\label{eq:I31-conv}
I_{3, 1} \to -\left(\nu, \kappa_{\infty}(\overline{\nu}, \cdot)\overline{\cD}_{\infty}(\overline{\nu}, \cdot)\right)\left(\nu, D_1f\right),
\end{equation}
which becomes the mean-field drift term in \eqref{eq:G-adjusted}. Similarly, we can show that the second and third terms  $I_{3,2}, I_{3,3}$ in~\eqref{eq:term-3} converge respectively to:
\begin{equation}
\label{eq:two-conv}
\lim_{k \to \infty} I_{3,2} =  +(\nu, f) \left(\nu, \kappa_{\infty}\left(\overline{\nu}, \cdot\right)\right)\quad \mbox{and}\quad \lim\limits_{k \to \infty} I_{3,3} =  -\left(\nu, \kappa_{\infty}\left(\overline{\nu}, \cdot\right)f\right)\quad
\end{equation}
Let us show this for $I_{3, 2}$; the proof for $I_{3, 3}$ is similar.  It follows from Assumption~\ref{asmp:default-measures} that the default contagion measures $\mathcal D_{\cdot, \cdot, \cdot}$ converge to $\delta_0$ (delta mass measure at zero) uniformly in $\mathcal W_p$. Because $f \in \mathcal C^2_b$, we have the following convergence as $k \to \infty$, uniformly over $j$:
$$
\int_0^{1} f(x_j(1 -z_j)) \,\cD_{k, s_k, x^{(k)}_i}(\mathrm{d}z_j)  \to f(x_j),
$$
which together with the assumption~\eqref{eq:standard-sequence} yields
\begin{equation}
\label{eq:conv-aux-1}
\frac1{k}\sum\limits_{j=1}^{k} \int_0^{1} f(x_j(1 -z_j)) \,\cD_{k, s_k, x^{(k)}_i}(\mathrm{d}z_j) \to (\nu, f),
\end{equation}
 as $k \to \infty$. Finally, by uniform boundedness of $\kappa_{\cdot}(\cdot, \cdot)$ together with~\eqref{eq:standard-sequence}, we get:
\begin{equation}
\label{eq:conv-aux-2}
\frac1k\sum\limits_{i=1}^k\kappa_k\bigl(s_k, x_i^{(k)}\bigr) \to \bigl(\nu, \kappa_\infty\bigl(\overline{\nu}, \cdot\bigr)\bigr).
\end{equation}
Combined,~\eqref{eq:I31-conv} and~\eqref{eq:two-conv} complete the proof of~\eqref{eq:third-conv}, and of Lemma~\ref{lemma:fundamental-convergence}.

\subsection{Proof of Lemma~\ref{lemma:preliminary-estimate}} From Assumptions~\ref{asmp:birth-measures},~\ref{asmp:birth-intensities},~\ref{asmp:default-measures},~\ref{asmp:default-intensities}, we estimate separately each term for $f$ in $\mathfrak{L}\,\mathcal E_f(x) = I_1 + I_2 +I_3$, given in~\eqref{eq:term-1}-\eqref{eq:term-3}. The first term from~\eqref{eq:term-1} is estimated as:
$$
|I_1| \le r\norm{D_1f} + \frac{\sigma^2}2\norm{D_2f}.
$$
For the second term in~\eqref{eq:term-2}, from Assumption~\ref{asmp:birth-intensities}, we get:
$$
|I_2| \le 2C_{\lambda}\norm{f}\left(1 + \overline{\bx}\right).
$$
Finally, consider the third term in~\eqref{eq:term-3}. Via Assumptions~\ref{asmp:default-measures} and~\ref{asmp:default-intensities}, similarly to the proof of Lemma~\ref{lemma:fundamental-convergence}, this term is estimated as
$$
|I_3| \le 2C_{\kappa}\norm{f} + C_{\kappa}C_{\cD, 1}\norm{D_1f} + C_{\kappa}C_{\cD, 2}\norm{D_2f} + C_{\kappa}C_{\varepsilon_0}C_{\cD, 2}\cdot\triplenorm{f},
$$
where $C_{\cD, p}$ was defined in~\eqref{eq:D-const}. Combining these estimates, we complete the proof of Lemma~\ref{lemma:preliminary-estimate}.

\subsection{Estimation of the number of banks from above and below} These results will be needed for the proof of Lemma~\ref{lemma:bdd-moment} and Lemma~\ref{lemma:tight}. Define the minimal and maximal number of banks in the system $X^{(N)}$ on time horizon $[0, T]$:
$$
\mathfrak M^-_N(T) := \min\limits_{0 \le t \le T}\mathcal N_N(t),\quad \mathfrak M^+_N(T) := \max\limits_{0 \le t \le T}\mathcal N_N(t).
$$
We start by estimating $\mathfrak M^-_N(T)$ from below. First, we claim that $\mathfrak M^-_N(T)$ stochastically dominates a Binomial random variable $\xi_{N}$  with parameters
$ \xi_{N} \sim \mathrm{Bin}(N, e^{-C_{\kappa}T})$  
with mean $\bar{\xi} := N e^{-C_{\kappa}T}$.
Indeed, $\mathcal N_N(0) = N$, and the default intensities are uniformly bounded from above by the constant $C_{\kappa}$. Then assume there is no birth of new banks, and all default intensities are exactly $C_{\kappa}$ on $[0,T]$ as an extreme case. This makes the number of banks at $T$ fewer  than for our original system $X^{(N)}$ and  distributed as the binomial random variable $\xi_{N}$. 
The latter tends to infinity in law: $ \mathfrak M^-_N(T) \to \infty$ as $N\to \infty$ from Chernov's inequality 
\begin{equation}
\label{eq:Chernov}
\PP\big (\xi_{N} \le \bar{\xi}/2\big) \le \exp\left(-\bar{\xi}/8\right) 
\end{equation} 
and from it, we get 
the following estimate: there exists a constant $C_{\mathfrak M}$ such that 
\begin{equation}
\label{eq:r-moment}
\E\left[(\mathfrak M^-_N(T)\vee 1)^{-r}\right] \le \E\left[ (\xi_{N}\vee 1)^{-r} \right]\le \left(\bar{\xi}/2\right)^{-r} + \exp\left(-\bar{\xi}/8\right) \le C_{\mathfrak M}N^{-r}, \quad r > 0 . 
\end{equation}

Now, let us estimate the maximal number of banks from above. Consider a pure birth process $\beta_N = (\beta_N(t),\, t \ge 0)$ on $\{1, 2, \ldots\}$ starting from $\beta_N(0) = N$, such that the intensity of births from level $n$ to $n+1$ is equal to $C_{\lambda}n$. Recall the estimate $\lambda_N(s) \le C_{\lambda}N$ in Assumption~\ref{asmp:birth-intensities}. We have the following observation: If there are no defaults, then $\mathcal{N}_N(t)$ is dominated by the above birth process $\beta_N$: $\mathcal N_N(t) \le \beta_N(T)$, where $\beta_N(0) = N$. At the same time,
$$
d\E[\beta_N(t)]/dt = C_{\lambda}\E[\beta_N(t)],\quad \mbox{which implies}\quad \E [\beta_N(T)] = e^{C_{\lambda}T}N.
$$
Therefore, for every $N \ge 1$,
\begin{equation}
\label{eq:max-exp}
\E \left[\mathfrak M^+_N(T)\right]  \le e^{C_{\lambda}T}N.
\end{equation}
What is more, we can estimate the second moment: The generator of $N^{-1}\beta_N$ is
$$
\mathcal L_{N}f(x) = Nx\left(f(x + N^{-1}) - f(x)\right).
$$
Applying this to function $f := f_2$, we get:
$$
\mathcal L_{N}f_2(x) = 2x^2 + xN^{-1}.
$$
If $m_{N}(t) := N^{-2}\E \beta_N^2(t)$, we can write Kolmogorov equations:
$$
m'_N(t) = \E [ \mathcal L_Nf_2(N^{-1}\beta_N(t))] = 2m_N(t) + N^{-1}e^{C_{\lambda}T},\quad m_N(0) = 1.
$$
Solving this, it is easy to see that $\sup_{N}m_N(T) < \infty$. We can rewrite this as
\begin{equation}
\label{eq:estimate-second-moment-beta}
\E [\beta_N^2(T)] \le C_{\beta}N^2.
\end{equation}

\subsection{Proof of Lemma~\ref{lemma:mgle-to-0}} Consider the size of each jump of the process $\bigl(\mu^{(N)}, f\bigr)$. At the emergence of a new bank with reserves $y$ at time $t$, the empirical measure process jumps
\begin{equation}
\label{eq:jump-birth}
\mbox{from}\quad \mu^{(N)}_{t-} = \frac1{\mathcal N_N(t-)}\sum\limits_{i=1}^{\mathcal N_N(t-)}\delta_{X^{(N)}_i(t-)}\quad \mbox{to}\quad \mu^{(N)}_t = \frac1{\mathcal N_N(t-)+1}\Biggl[\sum\limits_{i=1}^{\mathcal N_N(t-)}\delta_{X^{(N)}_i(t-)} + \delta_y\Biggr].
\end{equation}
Therefore, the displacement of $(\mu^{(N)}, f)$ is equal to
$$
\frac1{\mathcal N_N(t-)+1}\Biggl[\sum\limits_{i=1}^{\mathcal N_N(t-)}f\bigl(X_i^{(N)}(t-)\bigr) + f(y)\Biggr] - \frac1{\mathcal N_N(t-)}\sum\limits_{i=1}^{\mathcal N_N(t-)}f\bigl(X_i^{(N)}(t-)\bigr).
$$
This random variable  is dominated a.s.~by $2\norm{f}/\mathcal N_N(t-)$. Similarly, at the default of the $i$th bank (assume without loss of generality that $i = 1$), the displacement in $(\mu^{(N)}, f)$ is
\begin{align}
\label{eq:jump-default}
\begin{split}
\frac1{\mathcal N_N(t-)-1}&\sum\limits_{j=2}^{\mathcal N_N(t-)}f((1 - z_j)X^{(N)}_j(t-)) -  \frac1{\mathcal N_N(t-)}\sum\limits_{i=1}^{\mathcal N_N(t-)}f(X^{(N)}_i(t-)),
\\ z_j & \sim \cD_{\mathcal N_N(t-), \mathcal S_N(t-),X_1^{(N)}(t-)}.
\end{split}
\end{align}
The expression in \eqref{eq:jump-default} is dominated by $2(\norm{D_1f} + \norm{f})/\mathcal N_N(t-)$. To conclude, in both cases, recalling the definition of $\triplenorm{\cdot}$ in \eqref{eq:triplenorm}, the displacement of $(\mu^{(N)}, f)$ is dominated by
\begin{equation}
\label{eq:size-jump}
\frac{2}{\mathcal N_N(t-)}\triplenorm{f}.
\end{equation}
Next, the quadratic variation $\langle \mathcal M_N^f\rangle$ satisfies
\begin{equation}
\label{eq:quadratic-variation}
\mathrm{d}\,\langle \mathcal M_N^f\rangle_t = \frac{\sigma^2}{\mathcal N_N(t)}\bigl(\mu^{(N)}_t, (D_1f)^2\bigr)\,\mathrm{d}t.
\end{equation}
From~\eqref{eq:quadratic-variation}, it follows that
\begin{equation}
\label{eq:new-inequality}
\langle \mathcal M^f_N\rangle_T \le \frac{\sigma^2}{\mathfrak M^-_N(T)}\int_0^T\left(\mu^{(N)}_s, (D_1f)^2\right)\,\mathrm{d}s \le \frac{\sigma^2T\cdot\norm{D_1f}^2}{\mathfrak M_N^-(T)}.
\end{equation}
On the time intervals when there are no banks at all, with $\mathcal N_N(t) = 0$, the martingale $\mathcal M_N^f$ stays in fact constant, therefore we can neglect these intervals in our calculations. Apply~\eqref{eq:r-moment} with $r/2$ instead of $r$ to get:
\begin{equation}
\label{eq:qv-to-0}
\E \left[ \langle \mathcal M^f_N\rangle_T  \right] \to 0.
\end{equation}
Next, from Lemma~\ref{lemma:preliminary-estimate} we get that $(\mathcal M^f_N)_{N \ge 1}$ is uniformly a.s.~bounded on $[0, T]$ (by a constant $CT\cdot \triplenorm{f} + 2\norm{f}$). Extract a subsequence $(\mathcal M^f_{N_j}(T))_{j \ge 1}$ which converges a.s.~and (by Lebesgue dominated convergence theorem) in $L^2$ to a random variable $\xi$. Let $\mathcal M_{\infty}^f(t) := \E(\xi\mid\mathfrak F_t)$. Then by the standard martingale inequality
$$
\E\sup\limits_{0 \le t \le T}\bigl(\mathcal M_{\infty}^f(t) - \mathcal M_{N_j}^f\bigr)^2 \le 4\E\bigl(\xi - \mathcal M_{N_j}^f\bigr)^2 \to 0.
$$
Therefore, we can extract a subsequence such that
$$
\mathcal M^f_{N_j'} \to \mathcal M_{\infty}^f\quad \mbox{a.s.~uniformly on}\quad [0, T].
$$
From~\eqref{eq:size-jump}, combined with estimates from below in subsection 7.4, we conclude that the process $\mathcal M_{\infty}^f$ is a.s.~continuous. Moreover, it has zero quadratic variation by~\eqref{eq:qv-to-0}. Any continuous martingale with zero quadratic variation is constant. Therefore, $\mathcal M_{\infty}^f(t) = \mathcal M_{\infty}^f(0) = 0$. Finally, every subsequence $(\mathcal M_{N}^f)_{N \ge 1}$ contains its own subsequence which converges to $0$ uniformly in $L^2$. The result of Lemma~\ref{lemma:mgle-to-0} immediately follows from here.

\subsection{Proof of Lemma~\ref{lemma:bdd-moment}} Recall that
$$
\bigl(\mu^{(N)}_t, f_p\bigr) = \frac1{\mathcal N_N(t)}\sum\limits_{i=1}^{\mathcal N_N(t)}\left[X_i^{(N)}(t)\right]^p \qquad \text{for }\quad \mathcal N_N(t) \ge 1.
$$
If $\mathcal N_N(t) = 0$, then $\bigl(\mu^{(N)}_t, f_p\bigr) = 0$. Therefore,
\begin{equation}
\label{eq:1140}
\sup\limits_{0 \le t \le T}\bigl(\mu^{(N)}_t, f_p\bigr) \le \frac1{\mathfrak M^-_N(T)\vee 1}\sum\limits_{i=1}^{\mathfrak M^+_N(T)}\sup\limits_{t \le T}\left[X_i^{(N)}(t)\right]^p.
\end{equation}
The supremum inside the sum in the right-hand side of~\eqref{eq:1140} is taken over all $t \in [0, T]$ such that $X_i^{(N)}(t)$ is well-defined; that is, the $i$th bank exists at time $t$. Recall that $\beta_N(T)$ is defined as a pure birth process in Section 7.4. Use for~\eqref{eq:1140} the estimate $\mathfrak M^+_N(T) \le \beta_N(T)$, Wald's identity and the estimate \eqref{eq:r-moment} for $\mathfrak M^-_N(T)$ with $r = 2$. We get:
\begin{align}
\label{eq:final-answer}
\E    \left[\sup\limits_{0 \le t \le T}\bigl(\mu^{(N)}_t, f_p\bigr) \right]^2 \le \E\left[\mathfrak M^-_N(T)\vee 1\right]^{-2}\cdot \E \left[\sum\limits_{k=1}^{\beta_N(T)}\sup\limits_{t \le T}\left[X_i^{(N)}(t)\right]^p\right]^2.
\end{align}
The second multiple in the right-hand side of~\eqref{eq:final-answer} is stochastically dominated by the random sum of random variables
\begin{equation}
\label{eq:random-sum}
\sum\limits_{i=1}^{\beta_N(T)}\xi^p_i,\quad \xi_i := \eta_i\exp\left(\sup\limits_{0 \le t \le T}\left[rt + \sigma W_i(t)\right]\right).
\end{equation}
Here, $\eta_i\sim \nu$ are i.i.d.~random variables, $\nu$ is a probability measure in $\mathcal P_p$ which stochastically dominates each $\mu^{(N)}_0$ and $\mathcal B_{\infty, N}$. Such measure exists because these measures have uniformly bounded $p$th moment. This, in turn, follows from $\mu^{(N)}_0 \to \mu^{(\infty)}_0$ in $\mathcal W_p$ (this is an assumption of Theorem~\ref{thm:hydro}) and Assumption~\ref{asmp:birth-measures}. Finally, $W_1, W_2, \ldots$ are i.i.d.~Brownian motions, independent of $\eta_i$, and the birth process $\beta$ is  independent of these Brownian motions and of $\eta_i$. By Wald's identity, we get for some constant $C_1$:
\begin{equation}
\label{eq:expectation-of-random-sum}
\E \Big[ \sum\limits_{i=1}^{\beta_N(T)}\xi_i^p \Big] = \E[\beta_N(T)]\cdot \E[\xi_1^p] \le C_1N.
\end{equation}
Combining~\eqref{eq:random-sum} with~\eqref{eq:expectation-of-random-sum}, we get for some constant $C_2$:
\begin{equation}
\label{eq:1152}
\E \Big[ \sum\limits_{i=1}^{\beta_N(T)}\sup\limits_{t}\left\{X_i^{(N)}(t)\right\}^p  \Big] \le C_2N.
\end{equation}
Next, the variance of this random sum~\eqref{eq:random-sum} is equal to
\begin{equation}
\Var\beta_N(T)\cdot \E\xi_1^{2p} + \E\beta_N(T)\cdot\Var\xi_1^p \le C_3N^2.
\label{eq:variance-of-random-sum}
\end{equation}
Here we used the estimate~\eqref{eq:estimate-second-moment-beta}. Combining~\eqref{eq:expectation-of-random-sum} and~\eqref{eq:variance-of-random-sum}, we get the following estimate: For some constant $C_4$,
\begin{equation}
\label{eq:second-moment-of-random-sum}
\E\beta_N^2(T) \le C_4N^2.
\end{equation}
In turn, combining~\eqref{eq:final-answer},~\eqref{eq:random-sum}, we complete the proof.

\subsection{Proof of Lemma~\ref{lemma:tight}} Recall $C_T$ from Lemma~\ref{lemma:bdd-moment}. Take any $\eta > 0$, and let $C := C_T/\eta$. Consider the subset $\mathcal K := \{\nu \in \mathcal P_q\mid (\nu, f_p) \le C\}$, which is compact in $\mathcal P_{q}$ by Lemma~\ref{lemma:precompact}.  From the standard Markov inequality, we have:
$$
\PP\left[\mu_t^{(N)} \in \mathcal K\quad \forall\, t \in [0, T]\right] > 1 - \eta.
$$
Next, take the algebra $\mathfrak A$ in $C_b(\mathcal P_q)$ generated by $\mathfrak M := \{(\cdot, f)\mid f \in \mathcal C^2_b\}$. This set $\mathfrak M$ separates points: for every $\nu'$ and $\nu''$ in $\mathcal P_q$, there exists an $f \in \mathcal C^2_b$ such that $(\nu', f) \ne (\nu'', f)$. This set $\mathfrak M$ also contains $1$, because $f_0 = 1 \in \mathcal C^2_b$. By the Stone-Weierstrass theorem \cite[Section 4.7]{Folland}, the algebra $\mathfrak A$ is dense in $C_b(\mathcal P_q)$ in the topology of uniform convergence on compact subsets.

\smallskip

From Lemmas~\ref{lemma:mgle-to-0},~\ref{lemma:preliminary-estimate}, the sequence $((\mu^{(N)}_t, f),\, t \in [0, T])_{N \ge 1}$ is tight in $\cD[0, T]$ for every $f \in \mathcal C^2_b$. Since $(\mu^{(N)}_t, f)$ is uniformly bounded by $\norm{f}$, for every collection $g_1, \ldots, g_m \in \mathcal C^2_b$ the following sequence is tight in $\cD[0, T]$:
$$
\bigl(\mu^{(N)}_t, g_1\bigr)\bigl(\mu^{(N)}_t, g_2\bigr)\cdot\ldots\cdot\bigl(\mu^{(N)}_t, g_m\bigr).
$$
Therefore, for every $\Phi \in \mathfrak A$, the following sequence is tight in $\cD[0, T]$: $\bigl(\Phi\bigl(\mu^{(N)}_t\bigr),\, t \in [0, T]\bigr)_{N \ge 1}$. Apply the criteria of relative compactness: \cite[Proposition 3.9.1]{EthierKurtz}, and complete the proof.

\section{Proof of Theorem~\ref{thm:hydro-new}}\label{sec:proof-new}

\subsection{Overview of the proof}  The proof is similar to the proof of Theorem~\ref{thm:hydro}, except the following changes. We cannot apply Lemma~\ref{lemma:fundamental-convergence} directly, because the birth intensities and the default contagion measures are scaled according to the {\it initial} number of banks $\mathcal N_N(0) = N$, rather than the {\it current} one $\mathcal N_N(t)$. Therefore, we need to take into account the ratio $N^{-1}\mathcal N_N(t)$, and its limit as $N \to \infty$ is $\mathcal N_{\infty}(t)$.

\begin{lemma}
For every $q > 0$, we have the following estimates:
\begin{align}
\label{eq:estimate-moments}
\begin{split}
&\sup\limits_{N \ge 1}\mathbb E\Bigl[N^{-1}\max\limits_{0 \le t \le T}\mathcal N_N(t) \Bigr]^q < \infty;\\
&\sup\limits_{N \ge 1}\mathbb E\Bigl[N^{-1}\max\limits_{0 \le t \le T}S_N(t) \Bigr]^q < \infty.
\end{split}
\end{align}
\label{lemma:basic-moment-estimates}
\end{lemma}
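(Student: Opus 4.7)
The plan is to Dynkin-integrate the Lyapunov function $V(x) := \mathfrak n(x) + \mathfrak s(x)$ (and its powers) against the generator $\mathfrak{L}$, then close with Gronwall for the pointwise-in-$t$ moments and with Doob (or Burkholder--Davis--Gundy) for the supremum. The starting point is the computation~\eqref{eq:Lyapunov-result} of $\varphi = \mathfrak{L}V$: the last two summations there have sign $\le 0$, so they can be dropped for an upper bound. Under Assumption~\ref{asmp:birth-intensities-new}, $\lambda_N(y) \le C_\lambda(N+y)$, while under Assumption~\ref{asmp:birth-measures-new} one has $\sup_{N,y}\overline{\mathcal B}_N(y) =: B^* < \infty$; together with the elementary inequality $\mathfrak s(x)/\mathfrak n(x) \le \mathfrak s(x)$ on $\{\mathfrak n(x) \ge 1\}$ (the case $\mathfrak n(x) = 0$ giving a trivial constant contribution), these yield
\[
\mathfrak{L}V(x) \le c_0 V(x) + c_1 N, \qquad x \in \mathcal X.
\]

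For the $q$-th moment I would apply $\mathfrak{L}$ directly to $V^q$. Its continuous piece equals $q r \mathfrak s V^{q-1} + \tfrac12\sigma^2 q(q-1) V^{q-2}\sum_i x_i^2$, which is dominated by $q\bigl(r + \tfrac12\sigma^2(q-1)\bigr) V^q$ since $\sum_i x_i^2 \le \mathfrak s^2 \le V^2$. Default jumps decrease $V$ (hence $V^q$ for $q > 0$) and therefore contribute $\le 0$. For the birth jumps, the convexity bound $(V+1+y)^q - V^q \le q(V+1+y)^{q-1}(1+y)$, integrated against $\mathcal B_{N,\cdot}$ using the uniform $p$-th-moment bound of Assumption~\ref{asmp:birth-measures-new}, produces $\int[(V+1+y)^q - V^q]\,\mathcal B_{N,\cdot}(\mathrm d y) \le C_q V^{q-1} + C_q'$ for $q \le p$; multiplying by $\lambda_N \le C_\lambda(N + V)$ and using Young's inequality $NV^{q-1} \le V^q + CN^q$ yields
\[
\mathfrak{L}V^q(x) \le \tilde c_{0,q}\, V^q(x) + \tilde c_{1,q}\, N^q.
\]
Dynkin's formula and Gronwall then give $\mathbb E[V(X^{(N)}(t))^q] \le C_{T,q}\, N^q$, after noting that $V(X^{(N)}(0)) = N + \mathcal S_N(0) \le CN$ because the $\mathcal W_p$-convergence of $\mu_0^{(N)}$ combined with Lemma~\ref{lemma:technical-convergence} bounds the initial mean $\overline{X^{(N)}(0)}$ uniformly in $N$.

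To upgrade to a supremum bound, I would use the semimartingale decomposition $V^q(X^{(N)}(t)) = V^q(X^{(N)}(0)) + \int_0^t \mathfrak{L}V^q(X^{(N)}(u))\,\mathrm d u + M_N^q(t)$. The drift integral is non-negative and, via the pointwise bound on $\mathfrak{L}V^q$ and the already-established estimate on $\mathbb E[V^q]$, satisfies $\mathbb E\sup_{t \le T}\int_0^t \mathfrak{L}V^q\,\mathrm d u \le C_{T,q} N^q$. The local martingale part $M_N^q$ is controlled by Doob's maximal inequality, or more efficiently by Burkholder--Davis--Gundy, with the quadratic variation of $M_N^q$ dominated through a parallel application of $\mathfrak{L}$ to $V^{2q}$. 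Since $\mathcal N_N(t), \mathcal S_N(t) \le V(X^{(N)}(t))$, this delivers both estimates in~\eqref{eq:estimate-moments}. The main obstacle is the \emph{a priori} unbounded factor $\lambda_N(\mathfrak s/\mathfrak n)$, which is why the key algebraic step is the substitution $\lambda_N(\mathfrak s/\mathfrak n) \le C_\lambda(N + \mathfrak s)$; a secondary technical issue is that the assertion "every $q > 0$" exceeds the moment information furnished by Assumption~\ref{asmp:birth-measures-new}, so in practice the bound is valid for $q \le p$, which is all that is needed downstream in the proof of Theorem~\ref{thm:hydro-new}.
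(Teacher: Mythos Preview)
Your Lyapunov--Gronwall--BDG route is sound, and the student-flagged restriction to $q\le p$ (in fact roughly $2q\le p$ once you control the predictable quadratic variation of the compensated jump part) is the honest range; nothing more is needed for Theorem~\ref{thm:hydro-new}. One technical point to watch when you carry out the ``parallel application of $\mathfrak L$ to $V^{2q}$'': the \emph{default} jumps, although they contribute nonpositively to $\mathfrak L V^q$, enter the carr\'e du champ with a plus sign, and the corresponding term $\sum_i\kappa_i\,\mathbb E[(V^q-(V-\Delta_i)^q)^2\mid X]$ contains a contribution of order $V^{2q-2}\cdot\mathfrak n\,\mathfrak s^2/N^2$ via the contagion losses; this closes only after you feed back the pointwise bound for $V^{2q+1}$, so make that explicit.

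The paper proceeds quite differently. For $\mathcal N_N$ it avoids generators entirely by noting that in the second setting the birth rate is bounded by a quantity of order $N$ independent of the current level $n$, so $\mathcal N_N(t)\le N+\theta_N$ with $\theta_N\sim\mathrm{Poi}(C_\lambda NT)$; since Poisson has moments of every order, this yields the first line of~\eqref{eq:estimate-moments} for \emph{all} $q>0$ in one stroke. For $\mathcal S_N$ the paper factorises $N^{-1}\mathcal S_N=(N^{-1}\mathcal N_N)\cdot(\mu^{(N)},f_1)$, applies Jensen and Cauchy--Schwarz, and then reuses the Lemma~\ref{lemma:bdd-moment} machinery for $(\mu^{(N)},f_{2q})$. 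Compared with your argument, the paper's proof is shorter and sharper for $\mathcal N_N$ but leans on the specific product structure of the second setting; your approach is more systematic, handles $\mathcal N_N$ and $\mathcal S_N$ in a single stroke, and would transfer to models where a clean stochastic comparison with a Poisson birth process is unavailable.
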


\begin{lemma} The sequence $(N^{-1}\mathcal N_N(t),\, 0 \le t \le T)$ of processes in $\cD[0, T]$ is tight. 
\label{lemma:normalized-counter}
\end{lemma}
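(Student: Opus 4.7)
My plan is to deduce tightness of $Z_N(\cdot) := N^{-1}\mathcal N_N(\cdot)$ in $\cD[0,T]$ from the classical combination of compact containment together with Aldous' criterion, using a Doob--Meyer decomposition to split the contributions of births and defaults. Since $\mathcal N_N$ is a pure-jump process whose only jumps are $+1$ at rate $\lambda_N(y_N(t))$, with $y_N(t) := \mathcal S_N(t)/\mathcal N_N(t)$, and $-1$ at total rate $\sum_{i \in I(t)} \kappa_N(y_N(t), X^{(N)}_i(t))$, I would write
\[
Z_N(t) \, = \, 1 \, + \, M_N(t) \, + \, \int_0^t \mu_N(s)\, \mathrm{d}s,
\]
where $M_N$ is a purely discontinuous martingale with jumps of size $\pm 1/N$ and
\[
\mu_N(s) \, := \, \frac{1}{N}\lambda_N(y_N(s)) \, - \, \frac{1}{N}\sum_{i \in I(s)}\kappa_N\bigl(y_N(s),\, X^{(N)}_i(s)\bigr).
\]

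Assumption~\ref{asmp:birth-intensities-new} yields $N^{-1}\lambda_N(y_N(s)) \le C_\lambda(1 + N^{-1}\mathcal S_N(s))$, while Assumption~\ref{asmp:default-intensities-new} yields $N^{-1}\sum_i \kappa_N \le C_\kappa Z_N(s)$. Hence $|\mu_N(s)| \le C\bigl(1 + N^{-1}\mathcal S_N(s) + Z_N(s)\bigr)$, and Lemma~\ref{lemma:basic-moment-estimates} supplies a uniform $L^q$ bound on $\sup_{s \le T} |\mu_N(s)|$ for every $q \ge 1$. For the martingale, the predictable quadratic variation equals $\langle M_N \rangle_t = N^{-2}\int_0^t[\lambda_N(y_N(s)) + \sum_i \kappa_N]\,\mathrm{d}s$; the same bounds combined with Lemma~\ref{lemma:basic-moment-estimates} give $\mathbb E\langle M_N\rangle_T \le C N^{-1}$, so by Doob's $L^2$ inequality $\mathbb E \sup_{t \le T} M_N(t)^2 \to 0$.

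Compact containment is now immediate: Lemma~\ref{lemma:basic-moment-estimates} combined with Markov's inequality gives $\limsup_N \mathbb P\bigl(\sup_{t \le T} Z_N(t) > K\bigr) \to 0$ as $K \to \infty$, so the family $\{Z_N(t)\}$ lives in a compact subset of $\mathbb R$ with high probability. For Aldous' condition, take stopping times $\tau_N \le \tau_N + h_N \le T$ with $h_N \to 0$; then
\[
\mathbb E |Z_N(\tau_N + h_N) - Z_N(\tau_N)| \, \le \, 2\bigl(\mathbb E \langle M_N\rangle_T\bigr)^{1/2} \, + \, h_N^{1/2} \Bigl(\mathbb E \int_0^T \mu_N(s)^2\, \mathrm{d}s \Bigr)^{1/2},
\]
where the first term is $O(N^{-1/2})$ and the second vanishes as $h_N \to 0$ uniformly in $N$ by the $L^2$ bound on $\mu_N$. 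Aldous' criterion and compact containment together imply tightness of $(Z_N)$ in $\cD[0,T]$.

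The main obstacle is not the tightness machinery itself, but the a priori estimate that closes the drift. Because $\lambda_N$ depends on $y_N = \mathcal S_N/\mathcal N_N$, a uniform bound on $\mu_N$ cannot be read off solely from properties of $\mathcal N_N$; one needs to simultaneously control $\mathcal S_N$. This is exactly what Lemma~\ref{lemma:basic-moment-estimates} furnishes, so my task reduces to invoking it at the right places. One minor subtlety is the degenerate regime $\mathcal N_N(s) = 0$, on which $y_N$ is undefined; by construction, on such intervals the birth rate is $\lambda_N(0)$ (or $\lambda_0$ in the isolated-state convention) and the death term is vacuous, so the bounds above persist trivially.
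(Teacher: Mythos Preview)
Your proposal is correct and follows essentially the same strategy as the paper: write the Doob--Meyer decomposition of $N^{-1}\mathcal N_N$, bound the birth and death intensities via Assumptions~\ref{asmp:birth-intensities-new} and~\ref{asmp:default-intensities-new}, and feed in the moment estimates of Lemma~\ref{lemma:basic-moment-estimates} to control both drift and martingale parts. The only methodological difference is the tightness criterion invoked at the end: the paper derives the increment bound $\E\bigl(N^{-1}\mathcal N_N(t) - N^{-1}\mathcal N_N(s)\bigr)^2 \le C(t-s)^2$ and appeals to a Kolmogorov--Chentsov-type criterion, whereas you use compact containment plus Aldous' condition. Your route is arguably cleaner, since the martingale contribution to the second-moment increment is only $O\bigl(N^{-1}(t-s)\bigr)$, which by itself is linear rather than quadratic in $t-s$; the Kolmogorov criterion still goes through because this term is simultaneously $o(1)$ in $N$, but your Aldous argument avoids having to juggle the two small parameters at once.
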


From Lemma~\ref{lemma:basic-moment-estimates}, we prove the statement of Lemma~\ref{lemma:tight}: the sequence
$$
(\mu^{(N)})_{N \ge 1}\quad \mbox{is tight in}\quad \cD([0, T], \mathcal W_q).
$$
Next, take a weak limit point $\mathcal N_{\infty} = (\mathcal N_{\infty}(t),\, 0 \le t \le T)$ from Lemma~\ref{lemma:normalized-counter}, and a weak limit point $\tilde\mu^{(\infty)}$ of $(\mu^{(N)})_{N \ge 1}$ in $\cD([0, T], \mathcal W_q)$, for some $q \in (1, p)$. Denote by $\tm(t)$ the mean of $\tilde \mu^{(\infty)}(t)$. The functional $\nu \mapsto (\nu, f_1)$ is continuous in $\mathcal W_q$ for $q > 1$. Therefore, taking a limit as $N \to \infty$, we get that the following process $\tilde{N}$ is a martingale:
$$
\tilde{N}(t) := \mathcal N_{\infty}(t) - \int_0^t\bigl[\lambda_{\infty}(\tm(s)) - \mathcal N_{\infty}(s)\left(\tilde\mu_{\infty}(s), \kappa_{\infty}(\tm(s), \cdot)\right)\bigr]\,\mathrm{d}s.
$$
It is continuous, and has zero quadratic variation; therefore, $\tilde{N}$ is constant (equal to its initial value $\tilde{N}(0) =1$). Thus $\mathcal N_{\infty}$ is, in fact, a deterministic function satisfying \eqref{eq:ratio-limit}. Finally, let us adjust Lemma~\ref{lemma:fundamental-convergence}, so that the expression converges to the right type of the generator. 

\begin{lemma} Take a function $f \in \mathcal C^2_b$. Consider a sequence $(\mathbf{x}^{(k)})_{k \ge 1}$ in $\mathcal X$ with
\begin{equation}
\label{eq:standard-sequence-new}
\frac{\mathbf n\left(\mathbf{x}^{(k)}\right)}{k} \to n_{\infty}\quad\mbox{and}\quad \mu_{\mathbf{x}^{(k)}} \to \nu\quad \mbox{in}\quad \mathcal W_p.
\end{equation}
For $\tilde{\mathcal A}$ defined in~\eqref{eq:new-operator}, we have: $\mathfrak{L}\mathcal E_f\left(\mathbf{x}^{(k)}\right) \to \tilde{\mathcal A}(n_{\infty}, \nu, f)$ as $k \to \infty$. 
\label{lemma:fundamental-convergence-new}
\end{lemma}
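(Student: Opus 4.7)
The proof follows the template of Lemma~\ref{lemma:fundamental-convergence} line by line, and the main novelty is bookkeeping: in the second setting the parameters $\lambda_k, \mathcal B_{k,\cdot}, \kappa_k, \mathcal D_{k,\cdot,\cdot}$ are indexed by the \emph{initial} system size $k$ rather than by the current dimension $N_k := \mathfrak n(\mathbf x^{(k)})$, so the ratio $N_k/k \to n_\infty$ enters the limit explicitly. I would apply the generator $\mathfrak L$ from~\eqref{eq:generator} to $\mathcal E_f$ and decompose $\mathfrak L\mathcal E_f(\mathbf x^{(k)}) = I_1 + I_2 + I_3$ as in~\eqref{eq:term-1}--\eqref{eq:term-3}, now with $s_k := \mathfrak s(\mathbf x^{(k)})$ and with parameters $\lambda_k(s_k/N_k)$, $\mathcal B_{k,s_k/N_k}$, $\kappa_k(s_k/N_k, \cdot)$, $\mathcal D_{k,s_k/N_k,\cdot}$.

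The diffusion term $I_1 = (\mu_{\mathbf x^{(k)}}, \mathcal G f) \to r(\nu, D_1 f) + \tfrac12\sigma^2(\nu, D_2 f)$ is unchanged. For the birth term $I_2$, Assumption~\ref{asmp:birth-intensities-new} gives the scaling
\[
\frac{\lambda_k(s_k/N_k)}{N_k+1} \;=\; \frac{\lambda_k(s_k/N_k)}{k}\cdot\frac{k}{N_k+1} \;\longrightarrow\; \frac{\lambda_\infty(\overline\nu)}{n_\infty},
\]
where the factor $n_\infty^{-1}$ reflects the mismatch between the parameter index $k$ and the normalization by $N_k+1$. Combined with Assumption~\ref{asmp:birth-measures-new} and Lemma~\ref{lemma:technical-convergence}, this yields $I_2 \to n_\infty^{-1}\lambda_\infty(\overline\nu)\bigl[(\mathcal B_{\infty,\overline\nu}, f) - (\nu, f)\bigr]$. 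Dually, for the Taylor-expansion piece $I_{3,1}$, Assumption~\ref{asmp:default-measures-new} gives $k\,\overline{\mathcal D}(k,y,x)\to\overline{\mathcal D}_\infty(y,x)$ uniformly, while the inner sum $\sum_j D_1 f(x_j) = N_k(\mu_{\mathbf x^{(k)}}, D_1 f)$ produces the opposite factor $N_k/k$. Rearranging as in~\eqref{eq:first-908} yields
\[
I_{3,1} \;=\; -\frac{N_k}{k}\,\bigl(\mu_{\mathbf x^{(k)}},\,k\overline{\mathcal D}(k,s_k/N_k,\cdot)\,\kappa_k(s_k/N_k,\cdot)\bigr)\,(\mu_{\mathbf x^{(k)}}, D_1 f) + \delta_k,
\]
with the residual controlled as in~\eqref{eq:residual-estimate}: the bound $\int z^2\,\mathcal D_k(dz) = O(k^{-2})$ (available from Assumption~\ref{asmp:default-measures-new} since $p\ge 2$) together with Remark~\ref{rmk:technical-default-measures} and the uniform bound on $\kappa_k$ in Assumption~\ref{asmp:default-intensities-new} gives $|\delta_k| \lesssim N_k/k^2 \to 0$. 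The limit of the main piece is $-n_\infty\bigl(\nu,\kappa_\infty(\overline\nu,\cdot)\,\overline{\mathcal D}_\infty(\overline\nu,\cdot)\bigr)(\nu, D_1 f)$, which is the mean-field drift contribution to~\eqref{eq:new-operator}.

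The remaining two pieces $I_{3,2}$ and $I_{3,3}$ carry no net $N_k/k$ scaling: they are normalized by $N_k(N_k-1)$ and $N_k-1$ respectively, so the $1/k$-scaling of $\mathcal D_k$ plays no role — we only use that $\mathcal D_{k,\cdot,\cdot}\Rightarrow\delta_0$, which makes $\int f(x(1-z))\,\mathcal D_k(dz) \to f(x)$. The arguments of~\eqref{eq:conv-aux-1}--\eqref{eq:conv-aux-2} apply verbatim and yield $I_{3,2} \to (\nu, f)(\nu,\kappa_\infty(\overline\nu,\cdot))$ and $I_{3,3}\to -(\nu,\kappa_\infty(\overline\nu,\cdot) f)$. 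Summing the five limits and matching against~\eqref{eq:new-operator} identifies the limit as $\tilde{\mathcal A}(n_\infty,\nu,f)$. The only real obstacle is ensuring that the dual factors $n_\infty$ (in the default-driven drift) and $n_\infty^{-1}$ (in the birth/regeneration) are assigned consistently, since they arise through structurally different routes: births are parametrized in $k$ but normalized by $N_k+1$, whereas the default contagion is parametrized in $k$ but summed over $N_k$ recipient banks.
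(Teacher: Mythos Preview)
Your proposal is correct and follows essentially the same approach as the paper's proof: both reduce to the argument of Lemma~\ref{lemma:fundamental-convergence}, note that $I_1$, $I_{3,2}$, $I_{3,3}$ converge exactly as before, and identify the two modifications---the birth term $I_2$ acquires the factor $n_\infty^{-1}$ from $\lambda_k/k \cdot k/(N_k+1)$, while the Taylor-expanded default term $I_{3,1}$ acquires the factor $n_\infty$ from $N_k/k$. Your write-up is in fact more explicit than the paper's about where these dual factors come from and why the residual $\delta_k = O(N_k/k^2)$ vanishes.
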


From Lemma~\ref{lemma:fundamental-convergence-new}, we get that every weak limit point 
$$
(\mathcal N_{\infty}(t),\, \mu^{(\infty)}_t,\, 0 \le t \le T)\quad \mbox{of}\quad (N^{-1}\mathcal N_N(t),\, \mu^{(N)}_t,\, 0 \le t \le T)\quad \mbox{in}\quad D([0, T], \mathbb R\times\mathcal P_p)
$$
satisfies the system~\eqref{eq:new-generator},~\eqref{eq:ratio-limit}. By uniqueness from Remark~\ref{rmk:existence-uniqueness}, we complete the proof.

\subsection{Proof of Lemma~\ref{lemma:basic-moment-estimates}} The estimation of the number of banks from above and below remains the same as in Lemma~\ref{lemma:bdd-moment}: In the proof of the upper estimate, we now have the intensity of births from level $n$ to level $n+1$ for the benchmark process $\beta_N$ (now dependent on $N$) equal to $C_{\lambda}N$, independent of $n$. Therefore,
$$
N^{-1}\beta_N(t) = 1 + N^{-1}\theta_N,\ \theta_N \sim \Poi(C_{\lambda}N).
$$
Applying the law of large numbers to $N^{-1}\theta_N$ and observing that convergence holds in every space $L^q$, we prove the first formula in~\eqref{eq:estimate-moments}. Let us show the second formula: 
\begin{align}
\label{eq:1422}
\begin{split}
\left[N^{-1}\mathcal S_N(t)\right]^q & = \left[N^{-1}\mathcal N_N(t)\right]^q\left[\mathcal N^{-1}_N(t)\mathcal S_N(t)\right]^q =  \left[N^{-1}\mathcal N_N(t)\right]^q\cdot \bigl(\mu^{(N)}_t, f_1\bigr)^q \\ & \le \left[N^{-1}\mathcal N_N(t)\right]^q \cdot \bigl(\mu^{(N)}_t, f_q\bigr).
\end{split}
\end{align}
In the last step of~\eqref{eq:1422}, we applied the inequality $(\mathbb E \xi)^q \le \mathbb E \xi^q$ for the random variable $f_1$ integrated against the probability measure $\mu^{(N)}_t$. Taking the supremum of~\eqref{eq:1422} and applying expected value, by the Cauchy-Schwarz inequality,
\begin{align} \notag
\left[\mathbb E\sup\limits_{0 \le t \le T}\left[N^{-1}\mathcal S_N(t)\right]^{q}\right]^2 & \le \mathbb E\sup\limits_{0 \le t \le T}\left[N^{-1}\mathcal N_N(t)\right]^{2q}\cdot\mathbb E \sup\limits_{0 \le t \le T}\bigl(\mu^{(N)}_t, f_q\bigr)^2 \\
& \le \mathbb E\sup\limits_{0 \le t \le T}\left[N^{-1}\mathcal N_N(t)\right]^{2q}\cdot\mathbb E\sup\limits_{0 \le t \le T}\bigl(\mu^{(N)}_t, f_{2q}\bigr),
\label{eq:1423}
\end{align}
where we use the inequality $(\mathbb E\xi)^2 \le \mathbb E \xi^2$ with $\xi = f_q$ and the probability measure $\mu^{(N)}_t$ in the second inequality. Finally, in ~\eqref{eq:1423} we may apply the second estimate in~\eqref{eq:estimate-moments} to the first term in the right-hand side, and estimate the second term similarly to Lemma~\ref{lemma:bdd-moment}:
$$
\sup\limits_{N \ge 1}\mathbb E\sup\limits_{0 \le t \le T}\bigl(\mu^{(N)}_t, f_{2q}\bigr) < \infty.
$$
This completes the proof that the right-hand side of~\eqref{eq:1423} is bounded from above by a constant, independent of $N$.

\subsection{Proof of Lemma~\ref{lemma:normalized-counter}}
The $N$th process starts from $1$, jumps upward by $N^{-1}$ with intensity
\begin{equation}
\label{eq:birth-intensity-estimate}
\lambda_{N}(S_N(t)) \le C_{\lambda}(N + S_N(t)) = NC_{\lambda}(1 + N^{-1}S_N(t)),
\end{equation}
and downward by $-N^{-1}$ with intensity
\begin{equation}
\label{eq:default-intensity-estimate}
\sum\limits_{i=1}^{\mathcal N_N(t)}\kappa_N\bigl(N^{-1}S_N(t), X_i^{(N)}(t)\bigr) \le C_{\kappa}\mathcal N_N(t).
\end{equation}
These estimates in~\eqref{eq:birth-intensity-estimate} and~\eqref{eq:default-intensity-estimate} are taken from Assumptions~\ref{asmp:birth-intensities-new} and~\ref{asmp:default-intensities-new}, respectively. By Lemma~\ref{lemma:basic-moment-estimates}, there exists a constant $C > 0$ such that the intensities of jumps of $N^{-1}\mathcal N_N(\cdot)$ are bounded (in $L^q$ for every $q > 0$) by $CN$, and the size of jumps is equal to $N^{-1}$. Therefore,
\begin{equation}
\label{eq:mgle-counter}
\frac1N\mathcal N_N(t) - \frac1N\int_0^t\Bigl[\lambda_N(N^{-1}S_N(s)) - \sum\limits_{i=1}^{\mathcal N_N(s)}\kappa_N\bigl(N^{-1}S_N(s), X_i^{(N)}(s)\bigr)\Bigr]\,\mathrm{d}s,\, 0 \le t \le T,
\end{equation}
is a local martingale, and because it is in $L^p$ an actual martingale. 
Similarly to Lemma~\ref{lemma:mgle-to-0}, we can imply that the sequence~\eqref{eq:mgle-counter} converges to $0$. From Lemma~\ref{lemma:basic-moment-estimates} we get that for some constant $C$, for all $s, t \in [0, T]$ and $N \ge 1$, we get: $\E(N^{-1}\mathcal N_N(t) - N^{-1}\mathcal N_N(s))^2 \le C(t-s)^2$, 
which implies tightness by \cite[Chapter 2, Problem 4.11]{KSBook}.

\subsection{Proof of Lemma~\ref{lemma:fundamental-convergence-new}}
By Lemma~\ref{lemma:technical-convergence}, $\overline{\mathbf{x}}^{(N)} \to \overline{\nu}$. The rest of the proof is similar to that of Lemma~\ref{lemma:fundamental-convergence}, but with the following changes. As $N \to \infty$, $\left[\mathfrak{n}\bigl(\mathbf{x}^{(N)}\bigr)\right]^{-1}\lambda_N(\overline{\bx})  \to \mathcal N_{\infty}^{-1}\lambda_{\infty}\bigl(\overline{\nu}\bigr)$. Therefore, instead of~\eqref{eq:second-conv}, we have:
\begin{equation}
\label{eq:second-conv-new}
I_2 \to n_{\infty}^{-1}\lambda_{\infty}\left(\overline{\nu}\right)\left[\left(\mathcal B_{\infty, \overline{\nu}}, f\right)  - \left(\nu, f\right)\right].
\end{equation}
A similar difference between Assumptions~\ref{asmp:default-measures} and~\ref{asmp:default-measures-new} means that, instead of~\eqref{eq:I31-conv}, we have:
\begin{equation}
\label{eq:I31-conv-new}
I_{3, 1} \to -n_{\infty}\left(\nu, \kappa_{\infty}(\overline{\nu}, \cdot)\cdot\overline{\cD}_{\infty}\bigl(\mathbf{n}\bigl(\mathbf{x}^{(N)}\bigr)\overline{\nu}, \cdot\bigr)\right)\left(\nu, D_1f\right).
\end{equation}
Convergence statements~\eqref{eq:first-conv} and~\eqref{eq:two-conv} stay the same. This completes the proof of Lemma~\ref{lemma:fundamental-convergence-new}.  

\section{Proof of Theorem~\ref{thm:individual}}\label{sec:theorem42}

\subsection{Overview of the proof} This is similar to the proof of Theorem~\ref{thm:hydro}, but easier, since we deal with real-valued processes instead of measure-valued ones. Let us split this proof into lemmas. For every function $f : (0, \infty) \to \mathbb R$, we can define a corresponding function $\varphi_f : \mathcal X \to \mathbb R$ as follows:
$$
\varphi_f(\mathbf{x}) \mapsto f(x_1),\quad \mathbf{x} \ne \varnothing;\quad \varphi_f(\varnothing) := 0.
$$
This function $\varphi_f$ effectively depends only on $x_1$. The generator $\mathfrak{L}$ from~\eqref{eq:generator} applied to $\varphi_f$ gives 
\begin{align}
\label{eq:generator-on-x-1}
\begin{split}
\mathfrak{L}\varphi_f&\left(\mathbf{x}\right)  = \mathcal G f(x_1) - \kappa_{\mathfrak n(\bx)}(\mathfrak{s}(\bx), x_1)f(x_1)\\ & + \sum\limits_{i=2}^{\mathfrak n(\bx)}\kappa_{\mathfrak n(\bx)}(\mathfrak s(\bx), x_i)\int_0^{\infty}\left[f(x_1(1-z)) - f(x_1)\right]\,\cD_{\mathfrak n(\bx), \mathfrak s(\bx),x_i}(\mathrm{d} z).
\end{split}
\end{align}
By It\^o's formula:
\begin{align}
\label{eq:Ito-chaos}
\begin{split}
f(X_1^{(N)}(t)) = f(X_1^{(N)}(0)) + \int_0^t\mathfrak{L}\varphi_f\left(X^{(N)}(s)\right)\,\mathrm{d}s + \widehat{\mathcal M}^f_N(t).
\end{split}
\end{align}
Here we denote by $\widehat{\mathcal M}_N^f =  (\widehat{\mathcal M}_N^f(t),\, t \ge 0)$ a local martingale. Its trajectories are right-continuous with left limits. Between jumps, it behaves according to the following stochastic equation:
\begin{equation}
\label{eq:M-martingale-x-1}
\mathrm{d}\,\widehat{\mathcal M}_N^f(t) := \sigma\,(D_1f)\bigl(X^{(N)}_1(s)\bigr)\,\mathrm{d}W_1(s),\quad t \ge 0,
\end{equation}
The following two lemmas are proved similarly to Lemmas~\ref{lemma:fundamental-convergence},~\ref{lemma:preliminary-estimate}.

\begin{lemma} Take a sequence $(\mathbf{x}^{(k)})_{k \ge 1}$ as in~\eqref{eq:standard-sequence} with $x^{(k)}_1 \to x_1^{(\infty)}$ as $k \to \infty$. For $f \in \mathcal C^2_b$, we get: $\mathfrak{L}\varphi_f\left(\mathbf{x}^{(k)}\right) \to \mathcal A^*( \overline{\nu}, f)$, where $\mathcal A^*$ is defined in~\eqref{eq:A-star}.
\label{lemma:convergence-x-1}
\end{lemma}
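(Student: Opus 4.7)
The plan is to decompose $\mathfrak{L}\varphi_f(\mathbf{x}^{(k)})$ into its three summands from~\eqref{eq:generator-on-x-1}---the diffusion term $\mathcal{G}f(x_1^{(k)})$, the killing term $-\kappa_k(s_k, x_1^{(k)})f(x_1^{(k)})$, and the contagion sum over $i = 2, \ldots, k$---and pass to the limit in each separately, reusing the machinery of Lemma~\ref{lemma:fundamental-convergence}. Throughout, write $k := \mathfrak{n}(\mathbf{x}^{(k)})$ and $s_k := \mathfrak{s}(\mathbf{x}^{(k)})$; Lemma~\ref{lemma:technical-convergence} applied to the hypothesis $\mu_{\mathbf{x}^{(k)}} \to \nu$ in $\mathcal{W}_p$ yields $s_k/k = \overline{\mathbf{x}^{(k)}} \to \overline{\nu}$.

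The first two terms are immediate from continuity. The diffusion term converges via $x_1^{(k)} \to x_1^{(\infty)}$ and the continuity of $D_1 f$ and $D_2 f$ on $(0, \infty)$. The killing term converges by combining the uniform convergence $\kappa_n(ny, x) \to \kappa_\infty(y, x)$ of Assumption~\ref{asmp:default-intensities} with the continuity of $\kappa_\infty$ and the convergences $s_k/k \to \overline{\nu}$, $x_1^{(k)} \to x_1^{(\infty)}$, together with continuity and boundedness of $f$.

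The substantive step is the contagion sum, and I would mirror the treatment of $I_{3,1}$ in Section~\ref{sec:theorem41}. First apply Lemma~\ref{lemma:Taylor} to each inner integral,
\[
\int_0^1 \bigl[f(x_1^{(k)}(1-z)) - f(x_1^{(k)})\bigr]\, \cD_{k,s_k,x_i^{(k)}}(\mathrm{d}z) = -D_1 f(x_1^{(k)})\,\overline{\cD}(k, s_k, x_i^{(k)}) + R_{k,i},
\]
with $|R_{k,i}| \le C_{\varepsilon_0}\triplenorm{f}\int z^2\,\cD_{k,s_k,x_i^{(k)}}(\mathrm{d}z)$. By~\eqref{eq:D-const} the $z^2$-integral is $O(k^{-2})$; summed against the uniformly bounded intensities $\kappa_k(s_k, \cdot) \le C_\kappa$, the total remainder is $O(k^{-1})$ and vanishes. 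Assumption~\ref{asmp:default-measures} (convergence in $\mathcal{W}_p$ of $k\,\xi_{k,s_k/k,\cdot}$, upgraded via Lemma~\ref{lemma:technical-convergence} to uniform convergence of means thanks to the uniformly bounded $p$th moments) gives $k\,\overline{\cD}(k, s_k, x_i^{(k)}) \to \overline{\cD}_\infty(\overline{\nu}, x_i^{(k)})$ uniformly in $i$. Rearranging, the leading part becomes
\[
-D_1 f(x_1^{(k)})\,\bigl(\mu_{\mathbf{x}^{(k)}},\,\kappa_k(s_k, \cdot)\,k\,\overline{\cD}(k, s_k, \cdot)\bigr) + o(1),
\]
where the $i=1$ omission and the $(k-1)$-versus-$k$ normalization are absorbed into $o(1)$ by uniform boundedness. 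Since the integrand converges uniformly to the bounded continuous $\kappa_\infty(\overline{\nu}, \cdot)\,\overline{\cD}_\infty(\overline{\nu}, \cdot)$, pairing with $\mu_{\mathbf{x}^{(k)}} \to \nu$ via Lemma~\ref{lemma:technical-convergence} gives the sought limit, and combining the three pieces reproduces $\mathcal{A}^*(\overline{\nu}, f)$ from~\eqref{eq:A-star}.

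The main obstacle is the simultaneous control of two small parameters: the $O(k^{-2})$ per-particle default impact (handled via Remark~\ref{rmk:technical-default-measures}) and the $O(k^{-1})$ missing-term and Wasserstein error from $\mu_{\mathbf{x}^{(k)}} \to \nu$. Both corrections are uniform in exactly the sense required, thanks to the quantitative form of Assumptions~\ref{asmp:default-measures} and~\ref{asmp:default-intensities}; essentially all the bookkeeping was already carried out in the proof of Lemma~\ref{lemma:fundamental-convergence}, and here one simply singles out the coordinate $x_1$ from the symmetric test function and keeps it fixed while the remaining coordinates participate in the empirical-measure averaging.
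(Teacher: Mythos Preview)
Your proposal is correct and follows exactly the route the paper indicates: the paper's own proof simply says the lemma ``is proved similarly to Lemma~\ref{lemma:fundamental-convergence}'', and your decomposition into the diffusion, killing, and contagion terms from~\eqref{eq:generator-on-x-1}, together with the Taylor estimate of Lemma~\ref{lemma:Taylor} and the moment bound~\eqref{eq:D-const} to control the residual, is precisely that argument specialized to the coordinate function $\varphi_f$.
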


\begin{lemma} For a constant $C_*$ and all $f \in \mathcal C^2_b$, $\mathbf{x} \in \mathcal X\setminus\{\varnothing\}$, we have: $\left|\mathfrak{L}\varphi_f(\mathbf{x})\right| \le C_*\triplenorm{f}$.
\label{lemma:213}
\end{lemma}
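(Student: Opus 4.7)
The plan is to bound each of the three terms in the explicit expression~\eqref{eq:generator-on-x-1} for $\mathfrak L \varphi_f(\bx)$ by a constant multiple of $\triplenorm{f}$, with the constant independent of $\bx$. The first two terms yield to elementary estimates: one has $|\mathcal G f(x_1)| \le r\norm{D_1 f} + (\sigma^2/2)\norm{D_2 f} \le (r + \sigma^2/2)\triplenorm{f}$, and the uniform ceiling $\kappa_n(s,x) \le C_\kappa$ from Assumption~\ref{asmp:default-intensities} gives $|\kappa_{\mathfrak n(\bx)}(\mathfrak s(\bx), x_1) f(x_1)| \le C_\kappa \triplenorm{f}$.

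The crux is the contagion sum, which contains $\mathfrak n(\bx) - 1$ summands: a naive estimate using $|f(x_1(1-z)) - f(x_1)| \le 2\norm{f}$ produces a bound proportional to $\mathfrak n(\bx)$, which is useless. To fix this I would exploit the inverse scaling of the default impact encoded in Remark~\ref{rmk:technical-default-measures}: for $q \le p$ one has the moment estimate $\int_0^1 z^q\, \cD_{N, x, s}(\md z) \le C_{\cD, q}/N^q$, and for all $N \ge n_0$ the support of $\cD_{N, \cdot, \cdot}$ lies in $[0, 1-\varepsilon_0]$.

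For $\mathfrak n(\bx) = N \ge n_0$, I would then apply Lemma~\ref{lemma:Taylor} inside each integral, writing $f(x_1(1-z)) - f(x_1) = -z D_1 f(x_1) + R(x_1, z)$ with $|R(x_1,z)| \le C_{\varepsilon_0} z^2 \triplenorm{f}$; integrating against $\cD_{N, \mathfrak s(\bx), x_i}$ and invoking the moment bounds above, each integral is at most $C_{\cD, 1}\norm{D_1 f}/N + C_{\varepsilon_0} C_{\cD, 2}\triplenorm{f}/N^2$. Multiplying by $\kappa \le C_\kappa$ and summing over the $N-1$ indices produces a uniform bound $C_\kappa C_{\cD, 1}\norm{D_1 f} + C_\kappa C_{\varepsilon_0} C_{\cD, 2}\triplenorm{f}/N$, independent of $N$. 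For the residual case $N < n_0$, the sum has fewer than $n_0$ terms, each trivially at most $2 C_\kappa \norm{f}$, giving the finite bound $2 n_0 C_\kappa \triplenorm{f}$. Assembling these four estimates delivers the claim with a suitable $C_*$ depending only on $r$, $\sigma$, $C_\kappa$, $C_{\cD, 1}$, $C_{\cD, 2}$, $C_{\varepsilon_0}$, $n_0$. The main (mild) obstacle will be recognizing that one must combine Lemma~\ref{lemma:Taylor} with the inverse-$N$ moment decay of $\cD$ to cancel the linear-in-$\mathfrak n(\bx)$ growth suggested by the naive estimate; without this cancellation the bound blows up on large systems.
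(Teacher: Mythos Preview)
Your proposal is correct and follows essentially the same approach as the paper, which simply states that the lemma ``is proved similarly to Lemma~\ref{lemma:preliminary-estimate}.'' Your decomposition into the diffusion term, the killing term, and the contagion sum---with the latter handled via Lemma~\ref{lemma:Taylor} together with the moment bounds $C_{\cD,1}$, $C_{\cD,2}$ of Remark~\ref{rmk:technical-default-measures} to cancel the $\mathfrak n(\bx)$-many summands---mirrors exactly how the paper bounds the analogous term $I_3$ in the proof of Lemma~\ref{lemma:preliminary-estimate}; your explicit case split $N < n_0$ versus $N \ge n_0$ is in fact slightly more careful than what the paper spells out.
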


Next, let us state some new lemmas.

\begin{lemma} For some constant $C_{T, q} > 0$, we get:
\begin{equation}
\label{eq:universal-bound}
\E \left[\varphi_{f_q}\left(X^{(N)}(t)\right) \right] = \E \left[\bigl(X^{(N)}_1(t)\bigr)^q \right] \le C_{T, q},\quad t \in [0, T].
\end{equation}
\label{lemma:upper-bd}
\end{lemma}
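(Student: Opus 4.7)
\medskip
\noindent\textbf{Plan.} The key idea is a pathwise synchronous coupling: $X_1^{(N)}$ is a.s.~dominated by the pure geometric Brownian motion driven by the same Brownian motion $W_1$ used in its construction. Between consecutive system jumps, $X_1^{(N)}$ follows the GBM SDE with drift $r$ and volatility $\sigma$, so on each interval $[\tau_k, \tau_{k+1})$ during which bank $1$ is alive,
\begin{equation*}
X_1^{(N)}(t) \,=\, X_1^{(N)}(\tau_k -)\exp\bigl((r - \sigma^2/2)(t - \tau_k) + \sigma(W_1(t) - W_1(\tau_k))\bigr).
\end{equation*}
At each jump time $\tau_k$ one of two things occurs: bank $1$ itself defaults, in which case $X_1^{(N)}(s) = \Delta$ for every $s \ge \tau_k$ and $\varphi_{f_q}(X^{(N)}(s)) = 0$ by the cemetery convention; or some other bank defaults and $X_1^{(N)}(\tau_k) = X_1^{(N)}(\tau_k -)(1 - \xi)$ with $\xi \in (0, 1)$ a.s. Iterating these two ingredients over all jumps in $[0, t]$ yields the pointwise bound
\begin{equation*}
\varphi_{f_q}\bigl(X^{(N)}(t)\bigr) \,\le\, \bigl(X_1^{(N)}(0)\bigr)^q\exp\bigl(q(r - \sigma^2/2)\,t + q\sigma W_1(t)\bigr) \qquad \text{a.s.,\ for every } t \ge 0.
\end{equation*}

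Taking expectation and using the moment generating function of $W_1(t) \sim \mathcal{N}(0, t)$,
\begin{equation*}
\E\bigl[\varphi_{f_q}(X^{(N)}(t))\bigr] \,\le\, \bigl(X_1^{(N)}(0)\bigr)^q \exp\bigl(q(r - \sigma^2/2)\,t + q^2\sigma^2 t / 2\bigr).
\end{equation*}
By hypothesis, $X_1^{(N)}(0)$ is deterministic with $X_1^{(N)}(0) \to x_1$, hence the sequence $\bigl(X_1^{(N)}(0)\bigr)_{N \ge 1}$ is uniformly bounded. Combining this with $t \le T$ produces the constant $C_{T, q}$, independent of $N$ and $t$.

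There is no real obstacle in this argument: the crucial feature is that every jump of $X_1^{(N)}$ is either a multiplication by a factor in $(0, 1)$ (a contagion event) or absorption at the cemetery (self-default), so $X_1^{(N)}$ is pathwise below the associated jump-free geometric Brownian motion, whose $q$-th moments are elementary. An alternative, generator-based route via It\^o's formula applied to truncations $f_q \wedge M$ combined with Gronwall would work equally well, since on the domain the part of $\mathfrak{L}\varphi_{f_q}$ coming from the killing and contagion jumps is nonpositive, leaving the clean bound $\mathfrak{L}\varphi_{f_q}(\bx) \le (qr + q(q-1)\sigma^2/2)\varphi_{f_q}(\bx)$; however, the coupling is cleaner.
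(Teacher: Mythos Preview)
Your argument is correct and is essentially the same as the paper's: both exploit the pathwise domination $X_1^{(N)}(t) \le X_1^{(N)}(0)\exp\bigl((r-\sigma^2/2)t + \sigma W_1(t)\bigr)$, which holds because between jumps $X_1^{(N)}$ is a geometric Brownian motion and every jump (contagion or self-default) is nonincreasing. One small slip: in your displayed GBM formula on $[\tau_k,\tau_{k+1})$ the initial value should be $X_1^{(N)}(\tau_k)$ rather than $X_1^{(N)}(\tau_k-)$, but this does not affect the iterated inequality you actually use.
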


\begin{lemma} \label{lemma:mgle-tight} For $f \in \mathcal C^2_b$, the sequences $(\widehat{\mathcal M}_N^f)_{N \ge 1}$ and $(X_1^{(N)})_{N \ge 1}$  are tight in $\cD[0, T]$.
%
\end{lemma}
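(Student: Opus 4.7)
The plan is to combine the It\^o decomposition~\eqref{eq:Ito-chaos} with the generator bound from Lemma~\ref{lemma:213} and the moment estimate from Lemma~\ref{lemma:upper-bd}, then verify a martingale tightness criterion for $\widehat{\mathcal M}_N^f$ and an Aldous-type oscillation criterion for $X_1^{(N)}$.

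For $(\widehat{\mathcal M}_N^f)_{N \ge 1}$: I would first catalog the three possible jump sources of $f(X_1^{(N)})$: (i) killing of bank~$1$, at rate at most $C_\kappa$ by Assumption~\ref{asmp:default-intensities}, producing a jump of magnitude at most $\norm{f}$; (ii) defaults of other banks $i \ne 1$, each at rate at most $C_\kappa$, causing $X_1^{(N)}$ to be multiplied by $(1 - \xi_{1i})$ with $\xi_{1i} \sim \cD_{\mathcal N_N(t), \mathcal S_N(t), X_i^{(N)}(t)}$, whence by Lemma~\ref{lemma:Taylor} the resulting jump in $f$ has magnitude at most $C \triplenorm{f}\,(\xi_{1i} + \xi_{1i}^2)$; (iii) births, which do not affect $X_1^{(N)}$. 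Using the bound~\eqref{eq:D-const} together with the expectation estimate on $\mathcal N_N$ from~\eqref{eq:max-exp}, I would obtain
\begin{equation*}
\E \bigl[ \langle \widehat{\mathcal M}_N^f \rangle_T \bigr] \le \sigma^2 \norm{D_1 f}^2 T + C_\kappa \norm{f}^2 T + C \triplenorm{f}^2 \cdot \frac{C_\kappa C_{\cD, 2}}{N^2} \cdot \E \int_0^T \mathcal N_N(s)\,\md s ,
\end{equation*}
where the last term is $O(1/N)$. Since all individual jumps are uniformly bounded by $\norm{f}$ and the predictable bracket is C-tight with uniformly Lipschitz expected paths, the martingale tightness criterion in~\cite{EthierKurtz} yields tightness of $(\widehat{\mathcal M}_N^f)_{N \ge 1}$ in $\cD[0, T]$.

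For $(X_1^{(N)})_{N \ge 1}$: the compact-containment condition at each fixed time follows from Lemma~\ref{lemma:upper-bd}. To handle the oscillation condition, I would fix a family of smooth truncations $f_K \in \mathcal C_b^2$ with $f_K(x) = x$ on $[0, K]$ and $f_K$ constant outside $[0, K+1]$, and apply~\eqref{eq:Ito-chaos} to $f_K$. By Lemma~\ref{lemma:213} the drift piece $\int_0^t \mathfrak{L} \varphi_{f_K}(X^{(N)}(s))\,\md s$ has uniformly Lipschitz paths, and the martingale piece $\widehat{\mathcal M}_N^{f_K}$ is tight by the previous step; hence $f_K(X_1^{(N)})$ is tight in $\cD[0, T]$ for each $K$. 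Letting $K \to \infty$ and using Markov's inequality with~\eqref{eq:universal-bound} to bound $\PP(\sup_{t \le T} X_1^{(N)}(t) > K)$ uniformly in $N$ transfers tightness to $(X_1^{(N)})_{N \ge 1}$ itself.

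The main obstacle is controlling the cumulative effect of the $\Theta(N)$ contagion-induced jumps of $X_1^{(N)}$ per unit time. Each such jump is individually of size $O(1/N)$, but there are linearly many, so a coarse pathwise estimate is insufficient. The saving feature is Assumption~\ref{asmp:default-measures}, which through~\eqref{eq:D-const} guarantees a squared-jump contribution of order $1/N^2$ per event, so the aggregate quadratic-variation contribution scales as $1/N$ and vanishes in the limit. The remaining $O(1)$ jump corresponds to the single killing event of bank~$1$, which is absorbed by the càdlàg topology and does not obstruct tightness. The careful variance bookkeeping in the displayed inequality above is the technical heart of the argument, with the martingale-tightness and Aldous-style criteria providing the framework.
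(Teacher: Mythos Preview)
Your proposal is correct and follows essentially the same route as the paper: bound the continuous quadratic variation by $\sigma^2\norm{D_1f}^2$, control the contagion-induced jump contributions via Assumption~\ref{asmp:default-measures} and~\eqref{eq:D-const}, obtain compact containment from Lemma~\ref{lemma:upper-bd}, and then transfer tightness of $f(X_1^{(N)})$ for $f \in \mathcal C_b^2$ to $X_1^{(N)}$. The only cosmetic difference is that you argue via explicit truncations $f_K$ and a $K\to\infty$ limit, whereas the paper invokes \cite[Proposition~3.9.1]{EthierKurtz} on the separating algebra $\mathcal C_b^2$ directly; these are equivalent standard devices. One minor bookkeeping slip: in your displayed bracket estimate the default-measure index should be the \emph{current} size $\mathcal N_N(s)$ rather than the initial $N$, so the last term is really $C\triplenorm{f}^2 C_\kappa C_{\cD,2}\,\E\!\int_0^T \mathcal N_N(s)^{-1}\,\md s$, which is still $O(1/N)$ by the lower-bound estimate~\eqref{eq:r-moment} and hence does not affect the conclusion.
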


Extract a convergent subsequence $X_1^{(N_j)} \Rightarrow X_1^{(\infty)}\quad\mbox{in}\quad \cD[0, T]$. From Theorem~\ref{thm:hydro}, Lemmata~\ref{lemma:convergence-x-1},~\ref{lemma:213}, we conclude that for every $f \in \mathcal C^2_b$ (with the usual convention that $f(\Delta) = 0$ at the cemetery state), the following process is a local martingale:
\begin{align*}
f\bigl(X_1^{(\infty)}(t)\bigr)  - f\bigl(X_1^{(\infty)}(0)\bigr) - \int_0^t\cG_{\mathbf{m}(s)}f\bigl(X^{(\infty)}_1(s)\bigr) \mathrm{d}s - \kappa_{\infty}\bigl(\mathbf{m}(t), X^{(\infty)}_1(t)\bigr)f\bigl(X_1^{(\infty)}(t)\bigr).
\end{align*}
By uniqueness of the martingale problem for geometric (killed) Brownian motion, this completes the proof of Theorem~\ref{thm:individual}.

\subsection{Proof of Lemma~\ref{lemma:upper-bd}} The process $X^{(N)}_1$ can only jump down. As long as it does not jump, it behaves as a geometric Brownian motion. Thus, $
X^{(N)}_1(t) \le  X^{(N)}_1(0)\exp[(r - \sigma^2/2)t + \sigma W_1(t)]$.
Fix a $q \in (0, p]$. Take the expectation of the $q$th degree of the maximum of $X^{(N)}_1(t)$ over $t \in [0, T]$. Analogous to Lemma~\ref{lemma:bdd-moment}, we get~\eqref{eq:universal-bound}.

\subsection{Proof of Lemma~\ref{lemma:mgle-tight}} For any function $f \in \mathcal C^2_b$, the process $f\bigl(X_1^{(N)}(t)\bigr)$ (until its killing time) is represented as in~\eqref{eq:Ito-chaos}. The local martingale $\widehat{\mathcal M}_N^f$ has quadratic variation $\langle\widehat{\mathcal M}_N^f\rangle$ with
$$
\frac{d\langle \widehat{\mathcal M}_N^f\rangle_t}{\mathrm{d}t} \le \sigma^2\norm{D_1f} < \infty.
$$
The intensity of jumps of $\widehat{\mathcal M}_N^f$ at time $t$ can be estimated from Assumption~\ref{asmp:default-intensities}:
\begin{equation}
\label{eq:varkappa}
\sum\limits_{i=2}^{\mathcal N_N(t)}\kappa_{\mathcal N_N(t)}\bigl(S_N(t), X_i^{(N)}(t)\bigr) \le \mathcal N_N(t)\,C_{\kappa}.
\end{equation}
The displacement due to a default of $X_i$ at time $t$ is equal to
$$
\eta_i := f(X_1^{(N)}(t-)(1-\xi_i)) - f(X_1^{(N)}(t-)),\quad \xi_i \sim \cD_{\mathcal N_N(t-), S_N(t-),X_i^{(N)}(t-)}.
$$
Because $\norm{D_1f}$ is a well-defined finite quantity for $f \in \mathcal C^2_b$, this displacement $\eta_i$ can be estimated from above as $\norm{D_1f}\xi_i \le \norm{D_1f}$. Combining Assumption~\ref{asmp:default-measures} with this estimate, we get that the maximum size of jumps of $\widehat{\mathcal M}_N^f$ tends to zero in $L^p$, as $N \to \infty$. For $f \in \mathcal C_b^2$, the functions $f, D_1f, D_2f$ have finite norm $\norm{\cdot}$. From the representation~\eqref{eq:Ito-chaos}, we get:
$ \sup_{N \ge 1}\E [\widehat{\mathcal M}_N^f(T) ]^2 < \infty$.
Therefore, similarly to the proof of Lemma~\ref{lemma:mgle-to-0} in Section 7, we show that the sequence $(\widehat{\mathcal M}_N^f)_{N \ge 1}$ is tight in $\mathcal D[0, T]$. Lemma~\ref{lemma:upper-bd}, together with the Markov inequality, implies compact containment condition: for every $\varepsilon > 0$ and $T > 0$, there exists a compact set $\mathcal K \subseteq (0, \infty)$ such that
\begin{equation}
\label{eq:cpt-cond}
\PP\bigl(X^{(N)}_1(t) \in \mathcal K\quad\forall\, t \in [0, T]\bigr) > 1 - \varepsilon.
\end{equation}
This, together with \cite[Proposition 3.9.1]{EthierKurtz}, Lemma~\ref{lemma:convergence-x-1},~\ref{lemma:213},  tightness of $(\mathcal M^f_N)$, and convergence of initial conditions, proves tightness of $f(X_1^{(N)})$ in $\cD[0, T]$ for every $T > 0$.

\section{Appendix: System Construction} \label{sec: Appendix}

We define $(X, I, M)$ inductively, and the corresponding generator $\mathfrak L$ in (\ref{eq:generator}) above.  The initial conditions are defined as follows:
$$
X(0) := \mathbf{x}_0,\quad M(0) = N_0 := \mathfrak n(\mathbf{x}_0),\quad I(0) := \{1, \ldots, N_0\}.
$$
Assume we already defined the system $(X(t), I(t), M(t))$ for $t \le \tau_k$, where $k = 0, 1, 2, \ldots$ is given. Let us define it on $[\tau_k, \tau_{k+1})$. First, assume $N_{k } := N(\tau_k) \ge 1$ with $I(\tau_k) \ne  \varnothing$. Define auxiliary stochastic processes $X^*_{k,i} = (X^*_{k,i}(s),\, s \ge 0)$ for $i \in I(\tau_k)$ to be independent geometric Brownian motions with drift $\mu  $ and  diffusion $\sigma^{2}$, and with initial value $X_{i}(\tau_{k})$. Define stopping times $\tau_{k,i}$:
\begin{align}
X^*_{k,i}(s) & = X_i(\tau_k)\exp\Bigl[\Bigl(r - \frac{\sigma^2}2\Bigr)t + \sigma W_{k,i}(s)\Bigr];\quad S^*_k(u) := \sum\limits_{i \in I(\tau_k)}X^*_{k,i}(s),\ \ s \ge 0;\\
\tau_{k,i} & := \inf \Bigl\{s \ge 0 :  \int_0^s\kappa_{N_k}(S^*(u), X^*_{k,i}(u))\,\md u \ge \eta_{k,i} \Bigr\},\ i \in I(\tau_k);\\
\tau_{k,0} & := \inf\Bigl\{s \ge 0 :  \int_0^s\lambda_{N_k}(S^*(u))\,\md u \ge \eta_{k,0}\Bigr\},
\end{align}
given the killing rate $\kappa_n(s,x) $ and birth rate $\lambda_n(s) $ functions for $n \in \mathbb N_{0}$, $x \in \mathcal X$, $s \in \mathbb R_{+}$. Here $\tau_{k,0}$ represents the necessary inter-arrival random time for the potential birth, and $\tau_{k,i}$ represents the potential default of bank $i$. The next event is now determined almost surely uniquely by the minimal arrival $ \min \{\tau_{k,i},\, i \in I(\tau_k)\cup\{0\}\} $ of these potential events. We set $\tau_{k+1} := \tau_k + \tau_{k,j}$ with the index $j := \arg\min_{i \in I (\tau_{k}) \cup \{0\}} \tau_{k,i}$, and define for $t \in [\tau_k, \tau_{k+1})$:
$$
X_i(t) := X^*_{k,i}(t - \tau_k),\, i \in I(\tau_k);\qquad I(t) := I(\tau_k),\quad M(t) := M(\tau_k).
$$
Then we consider two cases. If $j = 0$, a new bank emerges at time $\tau_{k+1}$ and set
\begin{align*}
M(\tau_{k+1}) & := M(\tau_k) + 1,\quad I(\tau_{k+1}) := I(\tau_k)\cup\{M(\tau_{k+1})\}; \\
X_i(\tau_{k+1}) & := X_i(\tau_{k+1}-),\, i \in I(\tau_k),\quad X_{M(\tau_{k+1})} :=\zeta_{k,M(\tau_k), S(\tau_{k+1}-)}.
\end{align*}
If $j \in I(\tau_{k+1})$, the $j$-th bank defaults at time $\tau_{k+1}$ with $X_{j}(\tau_{k+1}) := \varnothing $ and
\begin{align*}
M(\tau_{k+1}) & := M(\tau_k),\quad I(\tau_{k+1}) := I(\tau_k)\setminus\{j\};
\\
X_i(\tau_{k+1}) & := X_i(\tau_{k+1}-)\left[1 - \xi_{i,j,N_k, S(\tau_{k+1}-),X_j(\tau_{k+1}-)}\right],\, i \in I(\tau_{k+1}).
\end{align*}
Second, for the case of $N(\tau_k) = N_k = 0$ we have no banks at time $\tau_k$, i.e., $I(\tau_k) = \varnothing$. In that case the system regenerates via a birth.  Let us set $\tau_{k+1} := \tau_k + (\eta_{k,0}/ \lambda_{0}(0) ) 
$, and
\begin{align*}
N(t) & :=  0, \quad I(t) :=\varnothing, \quad X(t) := \varnothing,\quad t \in [\tau_k, \tau_{k+1});
\\
M(\tau_{k+1}) &:= M(\tau_k) + 1;\quad I(\tau_{k+1}) := \{M(\tau_{k+1})\},\quad X_{M(\tau_{k+1})}(\tau_{k+1}) := \zeta_{k,M(\tau_{k+1}),0}.
\end{align*}
The triple $(X, I, M)$ is now well-defined with $\, \lvert I ( \cdot ) \rvert =  \mathfrak n ( X(\cdot)) \le M(\cdot)\,$ on the time interval $[0, \tau_{\infty})$, where $\tau_{\infty} := \lim_{k \to \infty}\tau_k$. By construction, this is a Markov process on the state space
\begin{equation}
\Xi := \{(\mathbf{x}, \mathfrak{i}, \mathfrak m) \in \mathcal X\times 2^{\mathbb N}\times \mathbb N \, : \,  |\mathfrak{i}| = \mathfrak n(\bx) \le \mathfrak m\} ,
\end{equation}
and its law is uniquely determined up to explosion time.

\section*{Acknowledgements}

Part of the research was supported by National Science Foundation under grants NSF DMS-1615229, NSF DMS-1521743, and NSF DMS-1409434. Sarantsev benefited from the discussion with Clayton Barnes, Ricardo Fernholz, and Mykhaylo Shkolnikov.

\medskip\noindent

\bibliographystyle{alpha} 

\bibliography{references}

\end{document}